\documentclass{amsart} 

\usepackage{amsmath, amssymb, amsfonts, tikz, hyperref, comment, url, todonotes, array}
\usetikzlibrary{math}
\usetikzlibrary{decorations.pathreplacing}
\usetikzlibrary{arrows.meta}

\newtheorem{theorem}{Theorem}[section]
\newtheorem{lemma}[theorem]{Lemma}
\newtheorem{proposition}[theorem]{Proposition}

\theoremstyle{definition}
\newtheorem{example}[theorem]{Example}
\newtheorem{remark}[theorem]{Remark}
\newtheorem{definition}[theorem]{Definition}
\newtheorem*{definition*}{Definition}
\newtheorem*{theorem*}{Theorem}
\newtheorem*{proposition*}{Proposition}
\newtheorem*{corollary*}{Corollary}

\newcommand{\R}{\mathbb{R}}
\newcommand{\GL}{\mathrm{GL}}
\renewcommand{\S}{\mathfrak{S}}
\newcommand{\Supp}{\mathrm{Supp}}

\author{Yibo Gao}
\address{Department of Mathematics, Massachusetts Institute of Technology, \mbox{Cambridge, MA 02139}}
\email{\href{mailto:gaoyibo@mit.edu}{{\tt gaoyibo@mit.edu}}}

\author{Kaarel H\" anni}
\address{Department of Mathematics, Massachusetts Institute of Technology, \mbox{Cambridge, MA 02139}}
\email{\href{mailto:kaarelh@mit.edu}{{\tt kaarelh@mit.edu}}}

\begin{document}
\title{Boolean elements in the Bruhat order}
\date{\today}

\begin{abstract}
We show that $w\in W$ is boolean if and only if it avoids a set of Billey-Postnikov patterns, which we describe explicitly. Our proof is based on an analysis of inversion sets, and it is in large part type-uniform. We also introduce the notion of linear pattern avoidance, and show that boolean elements are characterized by avoiding just the $3$ linear patterns $s_1 s_2 s_1 \in W(A_2)$, $s_2 s_1 s_3 s_2 \in W(A_3)$, and $s_2 s_1 s_3 s_4 s_2 \in W(D_4)$. 

We also consider the more general case of $k$-boolean Weyl group elements. We say that $w\in W$ is $k$-boolean if every reduced expression for $w$ contains at most $k$ copies of each generator. We show that the $2$-boolean elements of the symmetric group $S_n$ are characterized by avoiding the patterns $3421,4312,4321,$ and $456123$, and give a rational generating function for the number of $2$-boolean elements of $S_n$.

\end{abstract}
\maketitle

\section{Introduction}\label{sec:intro}
Billey and Postnikov \cite{billey2005smoothness} defined a notion of pattern avoidance in Weyl groups, which efficiently characterizes those Weyl group elements $w$ whose corresponding Schubert variety $X_w$ is (rationally) smooth, for arbitrary Weyl groups, generalizing the well-known result of Lakshmibai and Sandhya \cite{Lakshmibai-Sandhya} that says for a permutation $w$, its Schubert variety $X_w$ is smooth if and only if $w$ avoids 3412 and 4231. Since then, Billey-Postnikov patterns (BP patterns), besides geometric importance, have seen many combinatorial applications as well, characterizing fully commutative elements \cite{billey2005smoothness}, chromobruhatic elements \cite{woo2018hultman}, separable elements \cite{gaetz2019splittings,gaetz2019separable}, and so on.

In this paper, we showcase another combinatorial application of BP patterns (Definition~\ref{def:BP}), by characterizing \textit{boolean} elements of arbitrary Weyl groups, generalizing a result by Tenner \cite{tenner2007pattern} for the symmetric group, who showed that a permutation $w$ is boolean if and only if it avoids 321 and 3412. Let $\Phi$ be any finite crystallographic root system with Weyl group $W=W(\Phi)$ (see more background in Section~\ref{sec:background}).
\begin{definition}
An element $w\in W$ is called \textit{boolean} if the interval $[\mathrm{id},w]$ in the (strong) Bruhat order is isomorphic to a Boolean lattice.
\end{definition}

Here is the first version of our main theorem.
\begin{theorem}\label{thm:main}
Let $\Phi$ be a root system. An element $w\in W(\Phi)$ is boolean if and only if $w$ avoids all the BP patterns in Table \ref{tab:booleanpatterns}.

\begin{table}[h!]
\centering
\begin{tabular}{c|c|c}
type     & forbidden patterns & \# patterns \\\hline
$A_2$    & $s_1s_2s_1=s_2s_1s_2$ (321) & 1\\\hline
$A_3$ & $s_2s_1s_3s_2$ (3412) & 1\\\hline
$B_2=C_2$ & $s_1s_2s_1$, $s_2s_1s_2$, $s_1s_2s_1s_2=s_2s_1s_2s_1$ & 3\\\hline
$B_3$ & $s_2 s_1 s_3 s_2$ & 1\\\hline
$C_3$ & $s_2 s_1 s_3 s_2$ & 1\\\hline
$D_4$ & $s_2s_1s_3s_4s_2$ & 1\\\hline
$G_2$ & all patterns of Coxeter length at least 3 & 7 \\\hline
\end{tabular}
\caption{Forbidden patterns for boolean elements in Weyl groups}
\label{tab:booleanpatterns}
\end{table}
\end{theorem}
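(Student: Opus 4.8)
The plan is to first reduce booleanness to a linear-algebraic condition on the inversion set $\mathrm{Inv}(w)\subseteq\Phi^+$, and then to match that condition against the pattern list. Concretely, I would establish the equivalences
$[\mathrm{id},w]\cong B_{\ell(w)}$ $\iff$ $\ell(w)=|\Supp(w)|$ $\iff$ $w$ is a product of $\ell(w)$ pairwise distinct simple reflections $\iff$ $\mathrm{Inv}(w)$ is a linearly independent subset of $\Phi^+$.
The first equivalence is order-theoretic: the atoms of $[\mathrm{id},w]$ are exactly the simple reflections $s\le w$, so if $[\mathrm{id},w]$ is boolean of rank $\ell(w)$ it has $|\Supp(w)|=\ell(w)$ atoms, while conversely if $\ell(w)=|\Supp(w)|=:k$ then any reduced word of $w$ has pairwise distinct letters (there are $k$ slots and every element of $\Supp(w)$ must occur), such a word is automatically reduced, and the subword property identifies $[\mathrm{id},w]$ with the subset lattice on those $k$ letters. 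For the last equivalence: a product of distinct simple reflections $s_{a_1}\cdots s_{a_k}$ has $\mathrm{Inv}(w)=\{\beta_1,\dots,\beta_k\}$ with $\beta_j=s_{a_1}\cdots s_{a_{j-1}}(\alpha_{a_j})$, and since the $a_i$ are distinct the coefficient of $\alpha_{a_j}$ in $\beta_j$ is $1$ while $\beta_j\in\mathrm{span}(\alpha_{a_1},\dots,\alpha_{a_j})$, so the $\beta_j$ are ``triangular'' and hence linearly independent; conversely, if $\mathrm{Inv}(w)$ is linearly independent then $\ell(w)=|\mathrm{Inv}(w)|\le\dim\mathrm{span}(\mathrm{Inv}(w))\le|\Supp(w)|\le\ell(w)$, forcing equality.

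Granting this, necessity of pattern avoidance is immediate. Each element in Table~\ref{tab:booleanpatterns} has length exceeding the rank of its Weyl group, hence has a linearly dependent inversion set, hence is non-boolean. And if $w$ contains a BP pattern $v$ in a sub-root-system $\Phi'$, then by Definition~\ref{def:BP} the inversion set of the flattening satisfies $\mathrm{Inv}(v)=\mathrm{Inv}(w)\cap\Phi'^+\subseteq\mathrm{Inv}(w)$; a subset of a linearly independent set is linearly independent, so if $w$ is boolean then so is every BP pattern of $w$. Equivalently, a boolean element avoids every pattern in the table.

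The substance is sufficiency: if $w$ avoids every pattern in Table~\ref{tab:booleanpatterns}, then $\mathrm{Inv}(w)$ is linearly independent. I argue the contrapositive: starting from a linear dependence among the roots of $\mathrm{Inv}(w)$, produce a BP pattern of $w$ equal to a table element. Reducing to $\Supp(w)=\Delta$ we may assume $\ell(w)>|\Delta|$, so every reduced word of $w$ repeats some generator; fix a reduced word and a minimal window $s\,\mathbf{v}\,s$ between two consecutive occurrences of a repeated generator $s$. Since $s\,\mathbf{v}\,s$ is reduced, $\mathbf{v}$ cannot lie in the parabolic generated by the generators commuting with $s$, so $\mathbf{v}$ involves some generator $t$ with $m(s,t)\in\{3,4,6\}$. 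I would then run a type-uniform case analysis on the Coxeter diagram near $s$ together with $\mathbf{v}$, in each case exhibiting a small dependent sub-configuration of $\mathrm{Inv}(w)$ spanning a sub-root-system $\Phi'$ of rank at most $4$ and certifying, via the identity $\mathrm{Inv}(w)\cap\Phi'^+=\mathrm{Inv}(v)$, that the flattening $v$ equals the relevant entry of the table: a single bond with no further obstruction produces the $A_2$ pattern $s_1s_2s_1$; a double or triple bond produces a $B_2$/$C_2$ or $G_2$ rank-$2$ pattern; two generators joined through $s$ along a path produce the path pattern $s_2s_1s_3s_2$ in type $A_3$, $B_3$ or $C_3$; and a branch at $s$ produces the $D_4$ pattern $s_2s_1s_3s_4s_2$.

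I expect this case analysis to be the main obstacle, for two reasons. First, the dependence one starts from is global whereas the table patterns are small, so one must show that a dependent inversion set always contains a dependent sub-configuration supported on a rank-$\le 4$ sub-root-system; the minimal-window device is meant to localize this, but controlling exactly which positive roots do and do not lie in $\mathrm{Inv}(w)$—and hence certifying that the flattening is precisely $s_2s_1s_3s_2$ or $s_2s_1s_3s_4s_2$ rather than merely ``some non-boolean element''—is delicate. Second, the non-simply-laced and branch cases need separate attention, and I would dispatch the $G_2$ line by brute force since the group has order $12$. Finally, I would reconcile the list with the three-pattern ``linear'' characterization, checking type by type that avoiding $s_1s_2s_1\in W(A_2)$, $s_2s_1s_3s_2\in W(A_3)$ and $s_2s_1s_3s_4s_2\in W(D_4)$ as linear patterns is equivalent to avoiding all of Table~\ref{tab:booleanpatterns}.
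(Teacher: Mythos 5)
Your reformulation ``$w$ is boolean $\iff$ $I_\Phi(w)$ is linearly independent'' is correct (the triangularity argument for one direction and the chain $\ell(w)=|I_\Phi(w)|\le\dim\mathrm{span}\,I_\Phi(w)\le|\Supp(w)|\le\ell(w)$ for the other both go through), and it gives a genuinely slicker proof of necessity than the paper's: since BP restriction satisfies $I_{\Phi'}(w|_{\Phi'})=I_\Phi(w)\cap E'\subseteq I_\Phi(w)$ and every table entry has length exceeding its rank, booleanness is inherited by BP patterns and no table entry is boolean. The paper instead proves this direction (Proposition~\ref{prop:onedir}) by a downward induction on length using Lemma~\ref{lem:si=support}, so on this half you have found a legitimately shorter route.

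The sufficiency direction, however, is a plan rather than a proof, and the step you defer is where essentially all of the content of the theorem lives. From a minimal window $s\,\mathbf{v}\,s$ in a reduced word you only learn that $\mathbf{v}$ contains a generator not commuting with $s$; to exhibit a BP pattern from the table you must produce a full sub-root-system $\Phi'=\Phi\cap E'$ of the correct isomorphism type and verify that \emph{every} positive root of $\Phi'$ is correctly classified as an inversion or a non-inversion of $w$ --- note that $3412=s_2s_1s_3s_2\in W(A_3)$ has such a window with $s=s_2$ yet contains no $A_2$ pattern, so the dichotomy ``single bond $\Rightarrow$ $A_2$ pattern'' already needs the unstated ``no further obstruction'' clause made precise, and the $B_3$ versus $C_3$ rows of the table are the same word with different inversion sets (Table~\ref{tab:labelDynkin}), a distinction invisible at the level of the Coxeter diagram and the window. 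You acknowledge this is ``delicate,'' but no mechanism is given for localizing a global linear dependence to a rank-$\le 4$ subsystem carrying exactly a table pattern. In the paper this hole is filled by the root-decomposition Lemma~\ref{lem:decompose} (itself a type-by-type case check), the biconvexity bookkeeping of Lemma~\ref{lem:maintech} which pins down all the relevant inversions and non-inversions, an induction on $|\bigcup_{\beta\in I_\Phi(w)}\Supp(\beta)|$, and then the separate linear-to-BP conversion of Section~\ref{sub:lineartoBP}; some version of that work is unavoidable and is missing here.
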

See Table~\ref{tab:labelDynkin} for labels on the Dynkin diagram, where we use $s_i$ to denote the reflection across the simple root $\alpha_i$. We omit root systems of rank 2 since no confusion will arise.

Theorem~\ref{thm:main} is notable because in \cite{tenner2007pattern}, Tenner showed that an element being boolean is equivalent to avoiding 10 patterns in type $B$ and avoiding 20 patterns in type $D$, with a certain notion of pattern avoidance for signed permutations, while we only need 7 BP patterns in type $B$ and 3 BP patterns in type $D$.

\begin{table}[h!]
\centering
\begin{tabular}{ m{0.66cm}| m{2.7cm}| m{1.6cm}| m{5cm} }
type & Dynkin diagram & pattern $\pi$ & inversions $I_{\Phi}(\pi)$ \\\hline
$A_2$ & 
\begin{tikzpicture}
\node at (0,0) {$\bullet$};
\node[above] at (0,0) {$\alpha_1$};
\node at (1,0) {$\bullet$};
\node[above] at (1,0) {$\alpha_2$};
\draw(0,0)--(1,0);
\end{tikzpicture}
& $s_1s_2s_1$ & $\{\alpha_1,\alpha_2,\alpha_1{+}\alpha_2\}$\\\hline
$A_3$ & 
\begin{tikzpicture}
\node at (0,0) {$\bullet$};
\node[above] at (0,0) {$\alpha_1$};
\node at (1,0) {$\bullet$};
\node[above] at (1,0) {$\alpha_2$};
\node at (2,0) {$\bullet$};
\node[above] at (2,0) {$\alpha_3$};
\draw(0,0)--(2,0);
\end{tikzpicture}
& $s_2s_1s_3s_2$ & $\{\alpha_2,\alpha_1{+}\alpha_2,\alpha_2{+}\alpha_3,\alpha_1{+}\alpha_2{+}\alpha_3\}$ \\\hline
$B_3$ &
\begin{tikzpicture}
\node at (0,0) {$\bullet$};
\node[above] at (0,0) {$\alpha_1$};
\node at (1,0) {$\bullet$};
\node[above] at (1,0) {$\alpha_2$};
\node at (2,0) {$\bullet$};
\node[above] at (2,0) {$\alpha_3$};
\draw(0,0)--(1,0);
\draw(1,0.03)--(2,0.03);
\draw(1,-0.03)--(2,-0.03);
\draw(1.4,0.1)--(1.6,0)--(1.4,-0.1);
\end{tikzpicture}
& $s_2s_1s_3s_2$ & $\{\alpha_2,\alpha_1+\alpha_2,\alpha_2+\alpha_3,\alpha_1+2\alpha_2+2\alpha_3\}$ \\\hline
$C_3$ &
\begin{tikzpicture}
\node at (0,0) {$\bullet$};
\node[above] at (0,0) {$\alpha_1$};
\node at (1,0) {$\bullet$};
\node[above] at (1,0) {$\alpha_2$};
\node at (2,0) {$\bullet$};
\node[above] at (2,0) {$\alpha_3$};
\draw(0,0)--(1,0);
\draw(1,0.03)--(2,0.03);
\draw(1,-0.03)--(2,-0.03);
\draw(1.6,0.1)--(1.4,0)--(1.6,-0.1);
\end{tikzpicture}
& $s_2s_1s_3s_2$ & $\{\alpha_2,\alpha_1+\alpha_2,2\alpha_2+\alpha_3,\alpha_1+2\alpha_2+\alpha_3\}$ \\\hline
$D_4$ & 
\begin{tikzpicture}
\node at (0,0) {$\bullet$};
\node[above] at (0,0) {$\alpha_1$};
\node at (1,0) {$\bullet$};
\node[above] at (1,0) {$\alpha_2$};
\node at (1.9,0.4) {$\bullet$};
\node[right] at (1.9,0.4) {$\alpha_3$};
\node at (1.9,-0.4) {$\bullet$};
\node[right] at (1.9,-0.4) {$\alpha_4$};
\draw(0,0)--(1,0)--(1.9,0.4);
\draw(1,0)--(1.9,-0.4);
\end{tikzpicture}
& $s_2s_1s_3s_4s_2$ & $\{\alpha_2,\alpha_1{+}\alpha_2,\alpha_2{+}\alpha_3,\alpha_2{+}\alpha_4,$ $ \alpha_1{+}2\alpha_2{+}\alpha_3{+}\alpha_4,\}$ \\\hline
\end{tabular}
\caption{Patterns of interest and their inversions}
\label{tab:labelDynkin}
\end{table}

Moreover, we also introduce a new notion of \textit{linear patterns} (Definition~\ref{def:linear}), which simultaneously generalizes the classical folding of root systems and root system embedding \cite{billey2005smoothness}. This notion allows us to derive an even simpler characterization of boolean elements, which requires only the same $3$ patterns in all types. The following is the second version of our main theorem.
\begin{theorem}\label{thm:linear}
Let $\Phi$ be an irreducible root system. An element $w\in W(\Phi)$ is boolean if and only if $w$ avoids the linear patterns $s_1 s_2 s_1 \in W(A_2)$, $s_2 s_1 s_3 s_2 \in W(A_3)$, and $s_2 s_1 s_3 s_4 s_2 \in W(D_4)$.
\end{theorem}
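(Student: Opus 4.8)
The plan is to derive Theorem~\ref{thm:linear} from Theorem~\ref{thm:main}. Since the former characterizes boolean elements by avoidance of the three linear patterns and the latter by avoidance of the BP patterns of Table~\ref{tab:booleanpatterns}, it suffices to prove the purely combinatorial equivalence that, for $w\in W(\Phi)$ with $\Phi$ irreducible, $w$ contains some BP pattern of Table~\ref{tab:booleanpatterns} if and only if $w$ contains one of the linear patterns $s_1s_2s_1\in W(A_2)$, $s_2s_1s_3s_2\in W(A_3)$, or $s_2s_1s_3s_4s_2\in W(D_4)$. Because a root-system embedding is a special case of a linear pattern (Definition~\ref{def:linear}) and linear pattern containment is transitive, this equivalence splits into two tasks: (a) every BP pattern $p$ in Table~\ref{tab:booleanpatterns} itself contains one of the three linear patterns; and (b) whenever one of the three linear patterns occurs in $w$, some BP pattern of Table~\ref{tab:booleanpatterns} occurs in $w$.

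For (a), the rows $A_2$, $A_3$, and $D_4$ are immediate, since there $p$ \emph{equals} the relevant linear pattern, realized by the identity map. The remaining rows are exactly where the folding half of Definition~\ref{def:linear} is needed: for each of the three $B_2=C_2$ patterns, for $s_2s_1s_3s_2$ in $W(B_3)$ and $W(C_3)$, and for the seven elements of $W(G_2)$ of length at least $3$, I would exhibit a linear map from $V(A_2)$, $V(A_3)$, or $V(D_4)$ into the ambient space, built from a standard folding (such as $A_3\twoheadrightarrow C_2$, $D_4\twoheadrightarrow B_3$, $A_5\twoheadrightarrow C_3$, or the triality folding $D_4\twoheadrightarrow G_2$) composed with a sub-root-system inclusion, verify the linear-pattern axioms, and check on the (few) relevant positive roots that it carries the universal pattern onto the given element; the inversion-set columns of Table~\ref{tab:labelDynkin} make these checks routine. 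The seven $G_2$ cases and the length-$4$ non-simply-laced patterns are the fiddliest part, and I expect this finite but delicate verification --- confirming in particular that the three universal patterns really do suffice, with no separate $B$-, $C$-, or $G$-type pattern needed --- to be the main obstacle.

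For (b), the cleanest route is to first establish, from the inversion-set criterion underlying the proof of Theorem~\ref{thm:main}, that booleanness descends along linear pattern containment: if $w$ is boolean and $w$ contains the linear pattern $w'$, then $w'$ is boolean. Granting this, each of the three universal patterns is non-boolean (each is itself a BP pattern of Table~\ref{tab:booleanpatterns}, so Theorem~\ref{thm:main} applies), so if $w$ contains one of them then $w$ is non-boolean, and Theorem~\ref{thm:main} shows $w$ contains some BP pattern of Table~\ref{tab:booleanpatterns}; this gives (b). Alternatively, a direct argument classifies an occurrence of one of the three patterns in $w$ by the rank-$2$, $3$, or $4$ reflection subgroup it generates inside $W(\Phi)$ --- of type $A_1\times A_1$, $A_2$, $B_2$, or $G_2$ in rank $2$, and correspondingly constrained in higher rank --- and uses biconvexity of the inversion set of $w$ to read off a BP pattern of Table~\ref{tab:booleanpatterns}, after passing to a standard parabolic or a folding. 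Together with Theorem~\ref{thm:main}, tasks (a) and (b) prove Theorem~\ref{thm:linear}.
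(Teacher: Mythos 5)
Your proposal inverts the paper's logical order in a way that makes it circular. You derive Theorem~\ref{thm:linear} from Theorem~\ref{thm:main}, but in this paper Theorem~\ref{thm:main} is itself only proven as a consequence of Theorem~\ref{thm:linear} (Section~\ref{sub:lineartoBP}); there is no independent proof of the BP-pattern characterization to appeal to. Tenner's results cannot substitute: they are stated in terms of classical (signed) permutation patterns for types $A$, $B$, $D$ only, and translating them into the BP framework, let alone covering the exceptional types, is not available off the shelf. Consequently the entire substantive content of the paper's proof --- the support lemma (Lemma~\ref{lem:si=support}), the root-theoretic decomposition lemma (Lemma~\ref{lem:decompose}), the induction on $\lvert\bigcup_{\beta\in I_\Phi(w)}\Supp(\beta)\rvert$ via Lemma~\ref{lem:maintech}, and Proposition~\ref{prop:onedir} --- is absent from your argument, and nothing in your proposal replaces it.

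There is a second, independent gap in your step (b). The claim that ``booleanness descends along linear pattern containment'' is not a known or easy fact; for these three specific patterns it is exactly Proposition~\ref{prop:onedir}, whose proof is a genuine induction showing that multiplying by a descent $s_\alpha$ either preserves the pattern occurrence or forces a reduced word containing two copies of $s_\alpha$. You cannot extract it ``from the inversion-set criterion underlying the proof of Theorem~\ref{thm:main}'' because you have no such proof in hand. What \emph{is} salvageable from your proposal is the purely combinatorial equivalence between avoiding the BP patterns of Table~\ref{tab:booleanpatterns} and avoiding the three linear patterns: your task (a) together with the restriction-to-the-span argument sketched in your ``alternatively'' paragraph is essentially the paper's Lemma~\ref{lem:lintoBP} plus the finite casework computing the sets $P_{\pi_1}$, $P_{\pi_2}$, $P_{\pi_3}$. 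But that equivalence is precisely what the paper uses to deduce Theorem~\ref{thm:main} \emph{from} Theorem~\ref{thm:linear}; it cannot by itself yield a proof of either theorem, and the inversion-set analysis that actually establishes Theorem~\ref{thm:linear} still has to be done.
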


The paper is organized as follows. In Section~\ref{sec:background}, we provide necessary background and definitions on Weyl groups and pattern avoidance. In Section~\ref{sec:mainpf}, we prove the two versions of our main theorems by first proving Theorem~\ref{thm:linear} and then deriving Theorem~\ref{thm:main} from Theorem~\ref{thm:linear}. Our proof is largely type-uniform and is completely independent of that of Tenner \cite{tenner2007pattern,tenner2020interval}, even in the case of type $A$ root systems whose Weyl group is isomorphic to the symmetric group. Finally in Section~\ref{sec:k-boolean}, we go back to the symmetric group and generalize the notion of boolean permutations to $k$-boolean permutations, characterize $2$-boolean permutations by pattern avoidance (as the case $k\geq3$ does not seem to be governed by pattern avoidance), and enumerate them.

\section{Background on Weyl groups and patterns}\label{sec:background}
We refer readers to \cite{humphreys1978introduction} for a detailed treatment on root systems.

Throughout the paper, let $\Phi\subset E$ be a finite crystallographic root system of rank $r$ inside an Euclidean space $E\simeq\R^r$ with a positive definite symmetric bilinear form $\langle-,-\rangle$. We fix a choice of positive roots $\Phi^+\subset\Phi$ which corresponds to a set of simple roots $\Delta=\{\alpha_1,\ldots,\alpha_r\}$. Let $W=W(\Phi)$ be its Weyl group, which is a finite subgroup of $\GL(E)$ generated by reflections $s_{\alpha}\in\GL(E)$ for all roots $\alpha$, or equivalent, by $s_{\alpha}$'s for $\alpha\in\Delta$. For simplicity of notations, we write $s_i$ for $s_{\alpha_i}$ where $\alpha_i\in\Delta$ and we call these reflections \textit{simple reflections}. 

The \textit{(strong) Bruhat order} on $W$, which naturally comes from the Bruhat decomposition of the flag variety, is defined to be the transitive closure of $w\lessdot ws_{\beta}$ if $\ell(w)=\ell(ws_{\beta})-1$, where $\ell$ denotes the Coxeter length. There is a minimum $\mathrm{id}$ and a maximum $w_0$ of the Bruhat order. The Bruhat order satisfies the \textit{subword property}, that says if $v<u\in W$ and $u=s_{i_1}\cdots s_{i_{\ell}}$ is a reduced expression, then there exists a subword of $s_{i_1}\cdots s_{i_{\ell}}$ that is a reduced expression for $v$.

A root system $\Phi$ is \textit{irreducible} if it cannot be properly partitioned into $\Phi_1\sqcup\Phi_2$ such that $\langle \beta_1,\beta_2\rangle=0$ for all $\beta_1\in\Phi_1$ and $\beta_2\in\Phi_2$. Irreducible root systems can be completely classified into 4 infinite families $A_n,B_n,C_n,D_n$ and exceptional types $E_6,E_7,E_8,F_4,G_2$. We adopt the following conventions for the classical types, as in \cite{humphreys1978introduction}:
\begin{itemize}
\item type $A_{n-1}$: $\Phi=\{e_i-e_j\:|\:1\leq i,j\leq n\}\subset\R^n/(1,\ldots,1)$, $\Phi^+=\{e_i-e_j\:|\:1\leq i<j\leq n\}$, $\Delta=\{e_i-e_{i+1}\:|\:1\leq i\leq n-1\}$;
\item type $B_n$: $\Phi=\{\pm e_i\pm e_j\:|\: 1\leq i<j\leq n\}\cup\{\pm e_i\:|\:1\leq i\leq n\}$, $\Phi^+=\{e_i\pm e_j\:|\:1\leq i<j\leq n\}\cup\{e_i\:|\:1\leq i\leq n\}$, $\Delta=\{e_i-e_{i+1}\:|\:1\leq i\leq n-1\}\cup\{e_n\}$;
\item type $C_n$: $\Phi=\{\pm e_i\pm e_j\:|\: 1\leq i<j\leq n\}\cup\{\pm 2e_i\:|\:1\leq i\leq n\}$, $\Phi^+=\{e_i\pm e_j\:|\:1\leq i<j\leq n\}\cup\{2e_i\:|\:1\leq i\leq n\}$, $\Delta=\{e_i-e_{i+1}\:|\:1\leq i\leq n-1\}\cup\{2e_n\}$;
\item type $D_n$: $\Phi=\{\pm e_i\pm e_j\:|\: 1\leq i<j\leq n\}$, $\Phi^+=\{e_i\pm e_j\:|\:1\leq i<j\leq n\}$, $\Delta=\{e_i-e_{i+1}\:|\:1\leq i\leq n-1\}\cup\{e_{n-1}+e_n\}$.
\end{itemize}
Note that the root system of type $B_2$ is isomorphic to $C_2$. And when we talk about root system of type $D_n$, we assume $n\geq4$ as $D_3$ is the same as $A_3$.

The \textit{root poset} is the partial order on $\Phi^+$ such that $\alpha\leq\beta\in\Phi^+$ if $\beta-\alpha$ can be written as a nonnegative (integral) linear combination of simple roots. The minimal elements of the root poset are precisely the simple roots $\Delta$ and there exists a unique maximum of the root poset called the \textit{highest root}. The root poset can be given the structure of a graded poset with the rank of a root being the sum of coefficients of this root in the simple root basis, known as the \emph{height} of this root. We say that a positive root $\beta$ is \textit{supported on} a simple root $\alpha\in\Delta$ if $\beta\geq\alpha$ in the root poset. Define the \textit{support} of $\beta$ to be $$\Supp(\beta):=\{\alpha\in\Delta\:|\:\beta\text{ is supported on }\alpha\}\subset\Delta.$$

For $w\in W(\Phi)$, its \textit{inversion set} is
$$I_{\Phi}(w)=\{\beta\in\Phi^+\:|\:w\beta\in\Phi^-\}.$$
We say that $\beta$ is an \textit{inversion} of $w$ if $\beta\in I_{\Phi}(w)$, and a \textit{(right) descent} of $w$ if $\beta\in I_{\Phi}(w)\cap\Delta$ is an inversion and also a simple root.
It is a standard fact that $\ell(w)=|I_{\Phi}(w)|$. The following lemma follows from definitions, with proof omitted.
\begin{lemma}\label{lem:inv-after-si}
Let $w\in W(\Phi)$ and $\alpha\in\Delta$ such that $\ell(ws_{\alpha})=\ell(w)+1$. Then
$$I_{\Phi}(ws_{\alpha})=s_{\alpha}I_{\Phi}(w)\cup\{\alpha\}.$$
\end{lemma}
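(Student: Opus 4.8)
The plan is to compute $I_\Phi(ws_\alpha)$ directly from the definition $I_\Phi(u)=\{\beta\in\Phi^+:u\beta\in\Phi^-\}$, treating the simple root $\alpha$ separately from the remaining positive roots. Two standard facts about a simple reflection $s_\alpha$ with $\alpha\in\Delta$ will do all the work: (i) $s_\alpha$ negates $\alpha$ and permutes $\Phi^+\setminus\{\alpha\}$; and (ii) the length criterion that $\ell(ws_\alpha)=\ell(w)+1$ holds if and only if $w\alpha\in\Phi^+$, equivalently if and only if $\alpha\notin I_\Phi(w)$. From the hypothesis together with (ii), I would record at the outset that $w\alpha\in\Phi^+$ and $\alpha\notin I_\Phi(w)$.

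First I would verify that $\alpha\in I_\Phi(ws_\alpha)$: indeed $ws_\alpha\alpha=w(-\alpha)=-w\alpha\in\Phi^-$ by the observation just made. Next, for $\beta\in\Phi^+\setminus\{\alpha\}$, fact (i) gives $s_\alpha\beta\in\Phi^+\setminus\{\alpha\}\subset\Phi^+$, so $ws_\alpha\beta\in\Phi^-$ if and only if $s_\alpha\beta\in I_\Phi(w)$. Since $\beta\mapsto s_\alpha\beta$ is an involution of the set $\Phi^+\setminus\{\alpha\}$, the collection of such $\beta$ is exactly $s_\alpha\bigl(I_\Phi(w)\cap(\Phi^+\setminus\{\alpha\})\bigr)$, and this equals $s_\alpha I_\Phi(w)$ because $\alpha\notin I_\Phi(w)$. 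Combining the two cases gives $I_\Phi(ws_\alpha)=\{\alpha\}\cup s_\alpha I_\Phi(w)$; the union is automatically disjoint since $s_\alpha I_\Phi(w)\subseteq\Phi^+\setminus\{\alpha\}$, which is consistent with $|I_\Phi(ws_\alpha)|=\ell(ws_\alpha)=\ell(w)+1$.

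I do not expect any genuine obstacle here: all the inputs are textbook root-system facts (see, e.g., \cite{humphreys1978introduction}), which is why the paper omits the argument. The only point requiring care is the special role of $\alpha$: it must be handled separately from $\Phi^+\setminus\{\alpha\}$ precisely because $s_\alpha$ does not preserve $\{\alpha\}$, and the hypothesis $\ell(ws_\alpha)=\ell(w)+1$ is used exactly to guarantee $\alpha\notin I_\Phi(w)$, so that reindexing by $s_\alpha$ produces no spurious term.
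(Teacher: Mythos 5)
Your proof is correct; the paper omits the proof of this lemma entirely (stating only that it ``follows from definitions''), and your direct verification---splitting $\Phi^+$ into $\{\alpha\}$ and $\Phi^+\setminus\{\alpha\}$, using that $s_\alpha$ permutes the latter, and invoking the length criterion $\ell(ws_\alpha)=\ell(w)+1\Leftrightarrow\alpha\notin I_\Phi(w)$---is exactly the standard argument the authors intend. No gaps.
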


The next proposition is useful and well-known (see for example \cite{hohlweg2016On}).
\begin{proposition}\label{prop:biconvex}
The inversion set uniquely characterizes a Weyl group element. In other words, $I_{\Phi}:W\rightarrow 2^{\Phi^+}$ is injective. Moreover, a subset $I\subset\Phi^+$ is the inversion set of some Weyl group elements if and only if it is \textit{biconvex}; that is, if and only if:
\begin{enumerate}
\item if $\alpha,\beta\in I$, $\alpha+\beta\in\Phi^+$, then $\alpha+\beta\in I$ and,
\item if $\alpha,\beta\notin I$, $\alpha+\beta\in\Phi^+$, then $\alpha+\beta\notin I$.
\end{enumerate}
\end{proposition}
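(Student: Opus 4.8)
The plan is to prove the three assertions in turn, using three standard facts: $\ell(w)=|I_\Phi(w)|$; the simple reflection $s_\alpha$ permutes $\Phi^+\setminus\{\alpha\}$ and sends $\alpha\mapsto-\alpha$; and every non-simple positive root $\beta$ satisfies $\beta-\alpha\in\Phi^+$ for some simple $\alpha$, hence $\beta$ is a sum of two positive roots (so a simple root is never such a sum). The claim that inversion sets are biconvex is immediate: if $\alpha,\beta\in I_\Phi(w)$ with $\alpha+\beta\in\Phi^+$, then $w\alpha,w\beta\in\Phi^-$, so $w(\alpha+\beta)=w\alpha+w\beta$ has all simple-root coordinates $\le 0$ and therefore lies in $\Phi^-$, giving $\alpha+\beta\in I_\Phi(w)$; the statement for roots outside $I_\Phi(w)$ is identical with the signs reversed. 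For injectivity I would induct on $\ell(w)$: if $\ell(w)=0$ then $I_\Phi(w)=\emptyset$ forces $w=\mathrm{id}$; otherwise the descent set $I_\Phi(w)\cap\Delta$ is nonempty and determined by $I_\Phi(w)$, so choosing a descent $\alpha$ and applying Lemma~\ref{lem:inv-after-si} to $ws_\alpha$ gives $I_\Phi(ws_\alpha)=s_\alpha\bigl(I_\Phi(w)\setminus\{\alpha\}\bigr)$, which is determined by $I_\Phi(w)$; by induction $ws_\alpha$, hence $w$, is determined.

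For the converse, given a biconvex set $I\subseteq\Phi^+$ I would induct on $|I|$, producing $w$ with $I_\Phi(w)=I$; the case $I=\emptyset$ gives $w=\mathrm{id}$. The key sublemma is that a nonempty biconvex set contains a simple root: take $\beta\in I$ of minimal height; if $\beta$ is not simple, write $\beta=\gamma+\delta$ with $\gamma,\delta\in\Phi^+$, and biconvexity condition~(2) in contrapositive form forces $\gamma\in I$ or $\delta\in I$, contradicting minimality. So fix a simple root $\alpha\in I$ and put $I':=s_\alpha(I\setminus\{\alpha\})$. Since $I\setminus\{\alpha\}\subseteq\Phi^+\setminus\{\alpha\}$ and $s_\alpha$ permutes $\Phi^+\setminus\{\alpha\}$, we get $I'\subseteq\Phi^+$ with $|I'|=|I|-1$, and $\alpha\notin I'$ because $s_\alpha\alpha=-\alpha\notin\Phi^+$. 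Granting that $I'$ is biconvex, the induction closes: $I'=I_\Phi(w')$ for some $w'$, and $\alpha\notin I'$ gives $\ell(w's_\alpha)=\ell(w')+1$, so Lemma~\ref{lem:inv-after-si} yields $I_\Phi(w's_\alpha)=s_\alpha I'\cup\{\alpha\}=(I\setminus\{\alpha\})\cup\{\alpha\}=I$.

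It remains to verify that $I'$ is biconvex, which I expect to be the only real obstacle. The useful observation is that for $\rho\in\Phi^+$ with $\rho\ne\alpha$ one has $s_\alpha\rho\in I\iff\rho\in I'$. For condition~(1) on $I'$: if $\mu,\nu\in I'$ with $\mu+\nu\in\Phi^+$, then $\mu+\nu\ne\alpha$, so $s_\alpha(\mu+\nu)=s_\alpha\mu+s_\alpha\nu\in\Phi^+$ lies in $I$ by condition~(1) for $I$ (noting $s_\alpha\mu,s_\alpha\nu\in I$) and is $\ne\alpha$, hence $\mu+\nu\in I'$. For condition~(2) on $I'$, suppose $\mu,\nu\in\Phi^+\setminus I'$ with $\mu+\nu\in\Phi^+$; if $\mu,\nu\ne\alpha$ the observation reduces the claim directly to condition~(2) for $I$, and the remaining case $\mu=\alpha$ (whence $\nu\ne\alpha$, since $2\alpha\notin\Phi$) goes by contradiction: were $\alpha+\nu\in I'$, then $s_\alpha(\alpha+\nu)\in I$ is a positive root whose sum with $\alpha$ is $s_\alpha\nu\in\Phi^+$, so condition~(1) for $I$ (using $\alpha\in I$) puts $s_\alpha\nu$ in $I$, i.e.\ $\nu\in I'$, a contradiction. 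Everything else is routine given the three standard facts above.
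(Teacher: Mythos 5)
The paper does not actually prove this proposition --- it is stated as well-known with a pointer to Hohlweg--Labb\'e --- so there is no in-house argument to compare yours against; I will just assess your proof on its own. It is correct and self-contained modulo the three facts you flag as standard, all of which are indeed standard ($\ell(w)=|I_\Phi(w)|$; $s_\alpha$ permutes $\Phi^+\setminus\{\alpha\}$; every non-simple positive root admits a simple root whose subtraction stays in $\Phi^+$). Biconvexity of inversion sets and injectivity (peeling off a descent and reconstructing $I_\Phi(ws_\alpha)=s_\alpha(I_\Phi(w)\setminus\{\alpha\})$ via Lemma~\ref{lem:inv-after-si}) are routine and handled correctly. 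The real content is the converse, and your induction is the standard one: a nonempty biconvex set contains a simple root $\alpha$ by the minimal-height argument, and $I'=s_\alpha(I\setminus\{\alpha\})$ is again biconvex. The only delicate point is condition~(2) for $I'$ when one of the two roots is $\alpha$ itself, and your contradiction --- if $\alpha+\nu\in I'$ then $s_\alpha(\alpha+\nu)\in I$, so $s_\alpha\nu=s_\alpha(\alpha+\nu)+\alpha\in I$ by condition~(1) for $I$ together with $\alpha\in I$, i.e.\ $\nu\in I'$ --- is exactly right. I see no gaps.
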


We can now introduce a restriction map, defined by Billey and Postnikov \cite{billey2005smoothness}. Let $E'\subset E$ be a subspace and $\Phi'=\Phi\cap E'$ is then a root system with an inherited set of positive roots $(\Phi')^+=\Phi^+\cap E'$. For any $w\in W(\Phi)$, its inversion set $I_{\Phi}(w)$ is biconvex and it is easy to see that the restriction $I_{\Phi}(w)\cap E'$ is also biconvex. By Proposition~\ref{prop:biconvex}, there is a unique element $w'\in W(\Phi')$ such that $I_{\Phi'}(w')=I_{\Phi}(w)\cap E'$. We call such $w'$ the \textit{restriction} of $w$ to $\Phi'$, denoted $w|_{\Phi'}$. 

\begin{definition}\label{def:BP}
We say that $w\in W(\Phi)$ contains the BP (Billey-Postnikov) pattern $\pi\in W(R)$, where choices of positive roots $\Phi^+\subset\Phi$ and $R^+\subset R$ have been fixed, if there exists a subspace $E'\subset E$ such that there is an isomorphism between root systems $\Phi':=\Phi\cap E$ and $R$ that preserves the chosen positive roots and maps $w|_{\Phi'}$ to $\pi$.
\end{definition}

We also introduce a new notion of \emph{linear patterns}, which enables an even nicer characterization of boolean elements.

\begin{definition}\label{def:linear}
We say that $w\in W(\Phi)$ contains the \textit{linear pattern} $\pi\in W(R)$, where choices of positive roots $\Phi^+\subset \Phi$ and $R^+\subset R$ have been fixed, if there exists a linear transformation $R\to \Phi$ that maps positive roots $R^+$ to positive roots $\Phi^+$, inversions $I_R(\pi)$ of $\pi$ to inversions $I_{\Phi}(w)$ of $w$, and non-inversions $R^+\setminus I_R(\pi)$ to non-inversions $\Phi^+\setminus I_{\Phi}(w)$. If the simple roots $\alpha_1, \ldots, \alpha_k$ of $R$ are mapped to $\beta_1, \ldots, \beta_k$, then we say that $w$ contains $\pi$ \emph{generated at} $\beta_1, \ldots, \beta_k$.
\end{definition}

We note that if $w$ contains the BP pattern $\pi$, then $w$ also contains the linear pattern $\pi$, but not necessarily the other way around. The difference between linear pattern containment and BP pattern containment is that in linear patterns, we do not require the map to be injective or angle-preserving, and we are also not required to map to all vectors in a subspace (we might map to only a strict subset of the vectors in a subspace). For example, there are linear patterns $\pi\in W(A_7)$ in $w\in W(E_7)$, and $\pi\in W(A_2)$ in $w\in W(B_2)$, but this is not the case for BP patterns. We proceed to give an example that demonstrates what linear patterns can look like.

\begin{example}
Let $\alpha_1$ be the long simple root of $B_2$ and $\alpha_2$ be the short simple root of $B_2$. Then $s_1 s_2 s_1\in W(B_2)$ contains the linear pattern $s_2 s_1 s_3 s_2\in W(A_3)$. Letting the simple roots of $A_3$ be $\beta_1, \beta_2, \beta_3$, this is demonstrated by sending $\beta_1\mapsto \alpha_2$, $\beta_2\mapsto \alpha_1$, $\beta_3\mapsto \alpha_2$. The rest of the map is then uniquely defined by linearity. As $I_{A_3}(s_2s_1s_3s_2)=\{\beta_2,\beta_1{+}\beta_2,\beta_2{+}\beta_3,\beta_1{+}\beta_2{+}\beta_3\}$, $(A_3)^+\setminus I_{A_3}(s_2s_1s_3s_2)=\{\beta_1,\beta_3\}$, $I_{B_2}(s_1s_2s_1)=\{\alpha_1,\alpha_1{+}\alpha_2,\alpha_1{+}2\alpha_2\}$, $(B_2)^+\setminus I_{B_2}(s_1s_2s_1)=\{\alpha_2\}$, we then see that inversions are sent to inversions, and non-inversions are sent to non-inversions. See Figure~\ref{fig:linear}. 
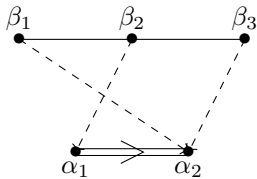
\begin{figure}[h!]
\centering
\begin{tikzpicture}[scale=1.5]
\node at (0,0) {$\bullet$};
\node[above] at (0,0) {$\beta_1$};
\node at (1,0) {$\bullet$};
\node[above] at (1,0) {$\beta_2$};
\node at (2,0) {$\bullet$};
\node[above] at (2,0) {$\beta_3$};
\draw(0,0)--(2,0);
\node at (0.5,-1) {$\bullet$};
\node[below] at (0.5,-1) {$\alpha_1$};
\node at (1.5,-1) {$\bullet$};
\node[below] at (1.5,-1) {$\alpha_2$};
\draw(0.5,-0.97)--(1.5,-0.97);
\draw(0.5,-1.03)--(1.5,-1.03);
\draw(0.9,-0.9)--(1.1,-1)--(0.9,-1.1);
\draw[dashed,->](0,0)--(1.5,-1);
\draw[dashed,->](1,0)--(0.5,-1);
\draw[dashed,->](2,0)--(1.5,-1);
\end{tikzpicture}
\caption{The linear  pattern $s_2 s_1 s_3 s_2\in W(A_3)$ in $s_1 s_2 s_1\in W(B_2)$}
\label{fig:linear}
\end{figure}
\end{example}

To further illustrate how BP patterns and linear patterns compare, we note that for $\pi$ in a type $A_k$ or $D_k$ Weyl group and $w$ in a type $A_n$ or $D_n$ Weyl group, $w$ contains the linear pattern $\pi$ iff $w$ contains the BP pattern $\pi$. We will not use this fact anywhere in the paper, and we only give a brief sketch of the proof. One can start by noting that in the case where both $R$ and $\Phi$ are irreducible and simply laced, the linear map in fact preserves angles, and further that for $A_n$ and $D_n$, it turns out that the linear pattern hits all the roots in the $\mathbb{R}$-span of the image, since there are no $A_n\subset D_n$ or $D_n\subset A_n$.

\section{Proof of the main theorem}\label{sec:mainpf}
We begin with the following simple proposition.
\begin{proposition}\label{prop:no-repeated-letters}
An element $w\in W$ is boolean if and only if any reduced expression (or equivalently, all reduced expressions) of $w$ does not contain repeated letters.
\end{proposition}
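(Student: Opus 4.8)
The plan is to prove both directions via the subword property of the Bruhat order together with the characterization of Boolean lattices as those posets in which every element is a join of atoms (equivalently, $[\mathrm{id},w]$ is Boolean iff it has exactly $\ell(w)$ atoms, or iff $|[\mathrm{id},w]| = 2^{\ell(w)}$).

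For the easy direction, suppose some reduced expression $w = s_{i_1}\cdots s_{i_\ell}$ contains a repeated letter, say $s_{i_a} = s_{i_b} = s$ with $a < b$. I would exhibit a subinterval of $[\mathrm{id},w]$ that is not Boolean. The cleanest route: the two distinct simple reflections that both equal $s$ contribute, via the subword property, to the number of subexpressions representing a given element being larger than in the Boolean case; concretely, I would show $|[\mathrm{id},w]| < 2^{\ell(w)}$ by a counting argument, since $2^{\ell(w)}$ counts all subwords of a reduced expression but distinct subwords can represent the same element precisely when there is a repeated letter (one must check that a repeat actually forces a collision — this is where one uses that $s^2 = \mathrm{id}$, so dropping a non-reduced subword and deleting both copies of $s$ gives two subwords of the same length class representing comparable or equal elements). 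Alternatively, and perhaps more robustly, I would locate a length-$3$ subword that is a reduced expression for an element $u$ with $\ell(u) = 2$ but with $s$ appearing twice — forcing $u = s$ after reduction is impossible if it is reduced of length $2$, so instead one finds that $[\mathrm{id}, w]$ contains a copy of the order-$3$ chain or a non-complemented element.

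For the main direction, suppose no reduced expression of $w$ has a repeated letter; equivalently, writing $w = s_{i_1}\cdots s_{i_\ell}$ reduced, the $i_j$ are distinct, so $\ell = |\Supp(w)|$. I claim the map sending a subset $S \subseteq \{i_1,\dots,i_\ell\}$ to the product of the corresponding simple reflections (in increasing order of position) is an isomorphism from the Boolean lattice $2^{\{i_1,\dots,i_\ell\}}$ onto $[\mathrm{id},w]$. Injectivity: every such subword is automatically reduced (a product of distinct simple reflections whose support is a subset — one needs that it has length equal to its number of letters; this follows because any subword of a reduced word, after deletion, is itself reduced when... actually here one invokes that a product of distinct generators appearing as a subword of a reduced word is reduced, or more carefully uses Lemma~\ref{lem:inv-after-si} to track that each new letter strictly enlarges the inversion set). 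Well-definedness and order-preservation in both directions then follow from the subword property: $v \le w$ iff $v$ has a reduced subword of the fixed reduced expression for $w$, and since all letters are distinct such a subword is determined by its set of positions. Surjectivity is exactly the subword property.

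The main obstacle I anticipate is the "no repeat $\Rightarrow$ every subword is reduced" step and, relatedly, showing the correspondence $S \mapsto \prod_{j\in S} s_{i_j}$ is well-defined independent of being able to reorder — i.e., that different increasing subwords give different group elements and that the resulting poset structure matches containment of position-sets rather than something coarser. I would handle this by induction on $\ell$ using Lemma~\ref{lem:inv-after-si}: appending a new simple reflection $s_{i_\ell}$ not among $s_{i_1},\dots,s_{i_{\ell-1}}$ to any reduced word in those letters strictly increases length (since $\alpha_{i_\ell}$ is a new simple root, not in the span needed to be already inverted — or simply since the inversion set grows by at least the element $\alpha_{i_\ell}$), giving both that all subwords are reduced and that the interval doubles in size, $|[\mathrm{id},w]| = 2\,|[\mathrm{id}, s_{i_1}\cdots s_{i_{\ell-1}}]| = 2^\ell$, which by the atom-count criterion forces $[\mathrm{id},w]$ to be Boolean.
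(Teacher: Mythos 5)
Your argument is correct in substance, but the two directions compare differently with the paper's proof. For the direction (no repeated letters $\Rightarrow$ boolean) you follow essentially the paper's route --- the subword property --- only unwound into an explicit isomorphism $S\mapsto\prod_{j\in S}s_{i_j}$; the point you worry about (every subword of a repetition-free reduced word is reduced) closes exactly as you suggest: a product $u$ of simple reflections none of which is $s_{i_\ell}$ leaves the coefficient of $\alpha_{i_\ell}$ in any root unchanged, so $u\alpha_{i_\ell}\in\Phi^+$ and appending $s_{i_\ell}$ raises the length; injectivity and order-compatibility then follow because the set of letters in a reduced word is an invariant of the element. For the converse you take a genuinely different and heavier route: you compare $|[\mathrm{id},w]|$ with $2^{\ell(w)}$ and must exhibit a collision among subwords, whereas the paper simply counts atoms --- the atoms of $[\mathrm{id},w]$ are exactly the distinct simple reflections occurring in a reduced word (subword property again), and a Boolean interval of rank $\ell(w)$ has $\ell(w)$ atoms, forcing the letters to be distinct. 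Your count does go through: the two singleton subwords at the repeated positions already represent the same element $s$, and $[\mathrm{id},w]$ is contained in the set of products of subwords of a fixed reduced expression, so $|[\mathrm{id},w]|\leq 2^{\ell(w)}-1<2^{\ell(w)}$. However, your hedged ``alternative'' for that direction (a length-$3$ subword that is a reduced expression for an element of length $2$) is self-contradictory as stated and should be dropped; the atom count is the cleaner way to finish.
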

A special case of Proposition~\ref{prop:no-repeated-letters} appears as Proposition 7.3 in \cite{tenner2007pattern} in the case of finite classical types, with the proof omitted. 
\begin{proof}
If $w$ is boolean, then the interval $[\mathrm{id},w]$ has the same number of atoms as the height. The atoms of $[\mathrm{id},w]$ are the simple reflections used by any reduced expression of $w$ while the height is $\ell(w)$. This implies that any reduced expression cannot contain repeated letters. Conversely, if $w$ is a product of distinct simple reflections, $[\mathrm{id},w]$ being boolean follows directly from the subword property of the strong Bruhat order.
\end{proof}

We will first prove Theorem~\ref{thm:linear}. Then in Section~\ref{sub:lineartoBP}, we deduce the BP pattern version from Theorem~\ref{thm:linear}.

\subsection{Proof of Theorem~\ref{thm:linear}}
We start the proof with a very useful lemma.
\begin{lemma}\label{lem:si=support}
Let $w\in W(\Phi)$ and $\alpha\in\Delta$ be a simple root. Then any (or equivalently, all) reduced expression of $w$ contains $s_{\alpha}$ if and only if there exists $\beta\in I_{\Phi}(w)$ supported on $\alpha$.
\end{lemma}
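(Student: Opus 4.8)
The plan is to prove both implications by tracking a single invariant: the coefficient of $\alpha$ in the simple-root expansion of a positive root. The key elementary observation is that if $\gamma\in\Delta$ with $\gamma\neq\alpha$, then $s_\gamma\beta=\beta-c\gamma$ for some scalar $c$, so $s_\gamma\beta$ has the same $\alpha$-coefficient as $\beta$; in other words, among the simple reflections, only $s_\alpha$ can change whether a positive root is supported on $\alpha$. Before anything else I would record the standard fact, already used in the proof of Proposition~\ref{prop:no-repeated-letters}, that the set of simple reflections appearing in a reduced expression for $w$ does not depend on the chosen reduced expression, so the quantifiers ``any'' and ``all'' agree and it suffices to work with one convenient reduced word.

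For the forward direction, assume some reduced expression for $w$ contains $s_\alpha$, and induct on $\ell(w)$. Write $w=w's_\gamma$ with $\ell(w')=\ell(w)-1$, where $\gamma$ is the last letter of a reduced word for $w$. If $\gamma=\alpha$, then $\alpha$ is a descent of $w$, so $\alpha\in I_\Phi(w)$ is itself an inversion supported on $\alpha$. If $\gamma\neq\alpha$, then a reduced word for $w'$ still contains $s_\alpha$, so by induction there is $\beta'\in I_\Phi(w')$ supported on $\alpha$; by Lemma~\ref{lem:inv-after-si}, $s_\gamma\beta'\in I_\Phi(w)$, and since $s_\gamma\beta'=\beta'-c\gamma$ with $\gamma\neq\alpha$, its $\alpha$-coefficient is still positive, while $s_\gamma\beta'\in\Phi^+$ because it is an inversion. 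Hence $s_\gamma\beta'$ is supported on $\alpha$.

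For the converse I would argue by contrapositive. If no reduced expression of $w$ contains $s_\alpha$, then $w$ lies in the standard parabolic subgroup $W_{J'}$, where $J'\subseteq\Delta\setminus\{\alpha\}$ is the set of letters used. A short induction on length shows that every element of $W_{J'}$ maps $\Phi^+\setminus\mathrm{span}(J')$ into itself: applying $s_\gamma$ for $\gamma\in J'$ leaves unchanged the coefficient on any fixed simple root outside $J'$, so positivity of that coefficient is preserved, and a root with a positive coefficient is positive. Therefore $I_\Phi(w)\subseteq\Phi^+\cap\mathrm{span}(J')$, and every such root has $\alpha$-coefficient $0$, hence is not supported on $\alpha$.

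I do not expect a real obstacle here; the statement is an elementary fact about Weyl groups, and the only care needed is in the quantifier equivalence and in confirming at each step that the roots produced are genuinely positive (automatic once they are known to be inversions). As an alternative to the parabolic argument in the converse, one could instead run the forward induction in reverse, peeling off the last letter $s_\gamma$ and using that a simple reflection $s_\gamma$ with $\gamma\neq\alpha$ neither creates nor destroys $\alpha$-support, but the parabolic subgroup formulation is cleaner.
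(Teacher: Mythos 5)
Your proof is correct. The engine is the same as the paper's --- a simple reflection $s_\gamma$ with $\gamma\neq\alpha$ changes only the $\gamma$-coefficient of a root, so it can neither create nor destroy support on $\alpha$ --- and your forward direction (peel the last letter off a reduced word, push the inversion through via Lemma~\ref{lem:inv-after-si}) is, once the recursion is unrolled, the paper's construction starting from the last occurrence of $s_\alpha$ and transporting $\alpha$ forward. Where you genuinely diverge is the converse: the paper proves it directly, inducting on $\ell(w)$ by stripping an arbitrary descent $\alpha'$ and carrying a supported inversion down to $ws_{\alpha'}$, whereas you pass to the contrapositive and use the standard parabolic fact that $w\in W_{J'}$ with $J'\subseteq\Delta\setminus\{\alpha\}$ forces $I_{\Phi}(w)\subseteq\Phi^+\cap\mathrm{span}(J')$. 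Both are sound. The paper's route has the small advantage of never needing the reduced-word-independence of the letter set as an input (that equivalence falls out of the two implications), while you invoke it up front to reconcile ``some'' with ``all''; note, though, that your parabolic argument applied to a single reduced word missing $s_\alpha$ already gives the stronger conclusion, so the invocation could be dropped. Your tacit identification of ``supported on $\alpha$'' with ``positive $\alpha$-coefficient'' is legitimate here because the root system is crystallographic, so the coefficients of a root in the simple basis are integers of a single sign.
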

\begin{proof}
Use induction on $\ell(w)$. The claim is clearly true when $\ell(w)=0$, where $w$ is the identity, with one reduced expression being the empty string and $I_{\Phi}(w)=\emptyset$. 

For the general case, assume first that there exists $\beta\in I_{\Phi}(w)$ supported on $\alpha$. We want to show that all reduced expressions of $w$ contain $s_{\alpha}$. A reduced expression of $w$ must end with $s_{\alpha'}$, where $\alpha'\in\Delta$ is a descent of $w$. If $\alpha'=\alpha$, we are done. If $\alpha'\neq\alpha$, by Lemma~\ref{lem:inv-after-si}, since $\beta\in I_{\Phi}(w)$, $s_{\alpha'}\beta\in I_{\Phi}(ws_{\alpha'})$. Since $\alpha\neq\alpha'$, $s_{\alpha'}\beta=\beta-\big(2\langle\alpha',\beta\rangle/\langle\alpha',\alpha'\rangle\big)\alpha'$ is also supported on $\alpha$. By induction hypothesis, all reduced expressions of $ws_{\alpha'}$ contain $s_{\alpha}$, so all reduced expressions of $w$ that end with $s_{\alpha'}$ contain $s_{\alpha}$. As we know this for every descent $\alpha'$, we know that all reduced expressions of $w$ contain $s_{\alpha}$. 

For the other direction, let $w=s_{i_1}\cdots s_{i_{\ell}}$ be a reduced expression and choose the largest $k$ such that $s_{i_k}=s_{\alpha}$. For $j=0,1,\ldots,\ell$, write $w^{(j)}=s_{i_1}\cdots s_{i_j}$ so that $w^{(0)}=\mathrm{id}$ and $w^{(\ell)}=w$. We use induction on $j$ from $k$ to $\ell$ to show that there exists $\beta_j\in I_{\Phi}(w^{(j)})$ such that $\beta_j$ is supported on $\alpha$. For $j=k$, take $\beta_k=\alpha$ since $\alpha$ is a descent for $w^{(k)}$. Now suppose we have $\beta_j$ constructed. By Lemma~\ref{lem:inv-after-si}, since $\beta_j\neq \alpha_{i_{j+1}}$, where $s_{i_{j+1}}$ denotes the reflection across the simple root $\alpha_{i_{j+1}}$, $s_{i_{j+1}}\beta_j\in I_{\Phi}(w^{(j+1)})$. We have $s_{i_{j+1}}\beta_j=\beta_j-\big(2\langle \alpha_{i_{j+1}},\beta_j\rangle/\langle \alpha_{i_{j+1}},\alpha_{i_{j+1}}\rangle\big)\alpha_{i_{j+1}}$ is supported on $\alpha$, since $\beta_j$ does and $\alpha_{i_{j+1}}\neq\alpha$ by maximality of $k$. Pick $\beta_{j+1}=s_{i_{j+1}}\beta_j$ and the induction step goes through. In the end, we conclude that there exists $\beta_{\ell}\in I_{\Phi}(w)$ that is supported on $\alpha$ as desired.
\end{proof}

\begin{remark}\label{rmk:si=reduced-typeA}
In the case of type $A_{n-1}$ where the Weyl group $W(A_{n-1})$ is isomorphic to the symmetric group $\S_n$, Lemma~\ref{lem:si=support} is saying that the simple transposition $s_k=(k\ k+1)$ appears in a reduced expression of $w$ if there exists $i,j$ such that $i\leq k<j$ and $w(i)>w(j)$. This fact can be easily observed.
\end{remark}

The following technical lemma, which is purely root-theoretic, is going to be important. It is also the only part of the proof that is not type-uniform. 
\begin{lemma}\label{lem:decompose}
Let $\alpha\in\Delta$ be a simple root and $\beta\neq\alpha\in\Phi^+$ be a positive root such that $s_{\alpha}\beta\in\Phi^+$ is supported on $\alpha$. Then (at least) one of the following is true:
\begin{enumerate}
\item $\beta+\alpha\in\Phi^+$;
\item $\beta=\alpha+\gamma_1+\gamma_2$ such that $\alpha+\gamma_1,\alpha+\gamma_2\in\Phi^+$ for some $\gamma_1,\gamma_2\in\Phi^+$;
\item $\beta=2\alpha+\gamma_1+\gamma_2+\gamma_3$ such that $\alpha+\gamma_i\in\Phi^+$ for $i\in\{1,2,3\}$, $\alpha+\gamma_i+\gamma_j\in\Phi^+$ for $i\neq j\in\{1,2,3\}$, and $\beta-\alpha\in\Phi^+$ for some $\gamma_1,\gamma_2,\gamma_3\in\Phi^+$.  
\end{enumerate}
\end{lemma}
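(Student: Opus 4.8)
The plan is to set $m := 2\langle\beta,\alpha\rangle/\langle\alpha,\alpha\rangle$, so that $s_\alpha\beta = \beta - m\alpha$, and to write $\beta = c\alpha + \delta$ where $c\ge 0$ is the coefficient of $\alpha$ in $\beta$ and $\delta$ is a nonnegative combination of the simple roots other than $\alpha$. Since the coefficient of $\alpha$ in $s_\alpha\beta$ is $c-m$, the hypothesis that $s_\alpha\beta$ is supported on $\alpha$ is exactly the inequality $c-m\ge 1$.

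First I would clear away the generic situations using the $\alpha$-root string through $\beta$, namely $\beta-p\alpha,\dots,\beta+q\alpha$ with $p,q\ge 0$ and $p-q=m$. If $m<0$ then $q = p+|m|\ge 1$, so $\beta+\alpha\in\Phi$; its coefficient of $\alpha$ is $c+1\ge 1$ and its other coefficients are nonnegative, hence $\beta+\alpha\in\Phi^+$ and (1) holds. If $m=0$ and in addition $\beta+\alpha\in\Phi$, the same positivity argument again gives (1). This leaves the cases where $m\ge 1$, or where $m=0$ and $\beta\pm\alpha\notin\Phi$; in all of these $c=(c-m)+m\ge 1$, so $\alpha\in\Supp(\beta)$.

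In these remaining cases I would restrict the ambient root system. Let $E'$ be the linear span of $\Supp(\beta)$ and $\Phi' := \Phi\cap E'$, a root system whose simple roots are $\Supp(\beta)$ and whose positive roots $(\Phi')^+ = \Phi^+\cap E'$ form a subset of $\Phi^+$. Since the support of a root is connected, $\Phi'$ is irreducible; the root $\beta$ has full support in $\Phi'$, the simple root $\alpha$ still lies in $\Phi'$, and both the identity $s_\alpha\beta = \beta-m\alpha$ and the form $\langle-,-\rangle$ are unchanged. As $(\Phi')^+\subseteq\Phi^+$, it suffices to verify (1), (2), or (3) inside $\Phi'$. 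Thus we may assume that $\Phi$ is irreducible and $\beta$ is a positive root supported on every simple root, with $m\ge 0$, $c\ge m+1$, and either $m\ge 1$ or $\beta\pm\alpha\notin\Phi$.

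At this point the lemma becomes a finite, type-by-type check over the short list of full-support positive roots of each irreducible root system. In type $A$ the only full-support root is the highest root, which we denote $\theta$ and whose coefficients all equal $1$; the hypotheses then force $m=0$ and $\alpha=\alpha_i$ with $1<i<n$, and the decomposition $\theta = \alpha_i + (\alpha_1+\cdots+\alpha_{i-1}) + (\alpha_{i+1}+\cdots+\alpha_n)$ realizes (2). In types $B_n$, $C_n$, $D_n$ one peels the Dynkin diagram into the arms hanging off $\alpha$, landing in (2) in most cases and in (3) in the remaining multiply-laced cases, which necessarily have $m\ge 1$ since the last condition of (3) demands $\beta-\alpha\in\Phi^+$. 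The exceptional types $E_6, E_7, E_8, F_4, G_2$ are handled by the analogous explicit inspection of their full-support roots. I expect this last step to be the only real obstacle: it is precisely the non-type-uniform part, one must exhibit a valid decomposition in every case, and in the multiply-laced and exceptional types the roots $\gamma_i$ cannot always be chosen simple and case (3) genuinely occurs, e.g.\ $\beta = 2\alpha_1 + 3\alpha_2$ with $\alpha = \alpha_1$ in $G_2$, where $\gamma_1=\gamma_2=\gamma_3=\alpha_2$ works.
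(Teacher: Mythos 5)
Your setup is sound and genuinely different from the paper's: the root-string argument cleanly dispatches conclusion (1) whenever $\langle\beta,\alpha^\vee\rangle<0$ (the paper instead discovers these cases one by one inside its coordinate casework), and the restriction to $\Phi\cap\mathrm{span}(\Supp(\beta))$ correctly reduces the problem to irreducible systems and full-support roots $\beta$ with $c\geq m+1\geq 1$. The paper only reduces to the irreducible case (by projecting onto the component containing $\alpha$) and then runs an exhaustive coordinate check over \emph{all} positive roots of $A_n,B_n,C_n,D_n$, deferring $G_2,F_4,E_8$ to a computer. Your reduction would shrink that check to the $O(n)$ full-support roots per classical type, which is a real improvement in principle.

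The gap is that the check itself --- which is the actual content of the lemma, and occupies essentially the entire proof in the paper --- is not carried out for $B_n$, $C_n$, $D_n$, or the exceptional types. Worse, the one structural claim you make about how it will go is false: you assert that conclusion (3) arises only ``in the remaining multiply-laced cases,'' but type $D_n$ is simply laced and genuinely requires (3). Concretely, take $\alpha=e_i-e_{i+1}$ with $i<n-1$ and $\beta=e_j+e_i$ for some $j<i$: then $m=1$, $s_\alpha\beta=e_j+e_{i+1}$ is supported on $\alpha$, $\beta+\alpha\notin\Phi$, and no decomposition of type (2) exists; one must use $\beta=2(e_i-e_{i+1})+(e_j-e_i)+(e_{i+1}-e_n)+(e_{i+1}+e_n)$ as in (3). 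This example also shows that the proposed method of ``peeling the Dynkin diagram into the arms hanging off $\alpha$'' only obviously produces a valid decomposition when every coefficient of $\beta$ equals $1$ (as in your type $A$ case); once $c=2$ the roots $\gamma_i$ are not sums of arms of the diagram and must be found by hand, exactly as the paper does. So the reduction framework is correct and nicer than the paper's, but without the type-by-type verification for $B_n$, $C_n$, $D_n$ (and at least a computer check for $F_4$, $E_8$, $G_2$), the lemma is not proved.
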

\begin{proof}
Let us first reduce to the case where $\Phi$ is irreducible. We split $\beta=\beta_1+\beta_2$, where $\beta_1$ is the projection of $\beta$ to the span of the irreducible component containing the simple root $\alpha$, and $\beta_2$ is the projection of $\beta$ to the orthogonal complement. Note that the assumption of the lemma then holds for the pair $\alpha,\beta_1$. Then assuming the lemma in the irreducible case, we get that either (1) $\beta_1+\alpha\in \Phi^+$, in which case also $\beta+\alpha\in \Phi^+$; or (2) there is a decomposition $\beta_1=\alpha+\gamma_1+\gamma_2$, in which case we can also decompose $\beta=\alpha+\gamma_1+(\gamma_2+\beta_2)$; or (3) there is a decomposition $\beta_1=2\alpha+\gamma_1+\gamma_2+\gamma_3$, in which case we can also decompose $\beta=2\alpha+\gamma_1+\gamma_2+(\gamma_3+\beta_2)$. So it remains to prove the lemma for an irreducible root system.

For the classical types, we carry out a manual case check on the standard constructions. We will proceed type by type, starting from the simply laced types.
\begin{itemize}
    \item[Type $A_n$:] $\alpha$ is a simple root $e_i-e_{i+1}$, and $\beta$ is a positive root $e_j-e_k$ for some $j<k$. Keeping in mind that $s_\alpha(\beta)$ is supported on $\alpha$, there are a few options:
    \begin{enumerate}
        \item $j<i<i+1<k$. Then we decompose $\beta=(e_i-e_{i+1})+(e_j-e_i)+(e_{i+1}-e_k)$, as in (2).
        \item $j=i+1$. Then $\beta+\alpha\in \Phi^+$, as in (1).
        \item $k=i$. Then $\beta+\alpha\in \Phi^+$, as in (1).
    \end{enumerate}
    \item[Type $D_n$:] Due to the automorphism of the Dynkin diagram of $D_n$, we can assume that $\alpha=e_i-e_{i+1}$. If $\beta=e_j-e_k$, then we are in the type $A_{n-1}$ subsystem, so we are done by the type $A_n$ case (crucially, we use the fact that $\alpha$ is also a simple root of this $A_{n-1}$, and $\beta$ is supported on $\alpha$ when taken as a root of $A_{n-1}$). So this leaves us with the case $\beta=e_j+e_k$ (with $j<k$). We split into a few options for $\alpha$:
    \begin{itemize}
        \item $\alpha=e_{n-1}-e_n$. Then there are a few options for the indices, keeping in mind that $s_\alpha(\beta)$ is supported on $\alpha$.
        \begin{itemize}
            \item $k<n-1$. Then we decompose $\beta=(e_{n-1}-e_n)+(e_j+e_n)+(e_k-e_{n-1})$, as in (2).
            \item $j<n-1<k=n$. Then $\beta+\alpha\in \Phi^+$, as in (1).
        \end{itemize}
        \item $\alpha=e_i-e_{i+1}$ for $i<n-1$. We again split into cases for the indices.
        \begin{itemize}
            \item $j<i$. Split into cases again.
            \begin{itemize}
                \item $k\neq i, i+1$. Then we decompose $\beta=(e_i-e_{i+1})+(e_j-e_i)+(e_{i+1}+e_k)$, as in (2).
                \item $k=i+1$. Then $\beta+\alpha\in \Phi^+$, as in (1).
                \item $k=i$. Then we decompose $\beta=2(e_i-e_{i+1})+(e_j-e_i)+(e_{i+1}-e_n)+(e_{i+1}+e_n)$, as in (3).
            \end{itemize}
            \item $j=i, k=i+1$. Then we decompose $\beta=(e_i-e_{i+1})+(e_{i+1}-e_n)+(e_{i+1}+e_n)$, as in (2).
            \item $j=i+1$. Then $\beta+\alpha\in \Phi^+$, as in (1).
        \end{itemize}
    \end{itemize}
    
    \item[Type $B_n$:] If $\alpha,\beta$ are both in the type $A_{n-1}$ subsystem, then we are done by the type $A_n$ case, as before. It remains to consider the case where $\alpha=e_n$, $\beta=e_k$, or $\beta=e_j+e_k$ (with $j<k$). We proceed to check these cases.
    
    \begin{itemize}
        \item $\alpha=e_n$. There are a few options for $\beta$, again keeping in mind that $s_\alpha(\beta)$ is supported on $\alpha$.
        \begin{itemize}
            \item $\beta=e_k$. Then $\beta+\alpha\in \Phi^+$, as in (1).
            \item $\beta=e_j-e_n$. Then $\beta+\alpha\in \Phi^+$, as in (1).
            \item $\beta=e_j+e_k$ with $k<n$. Then we decompose $\beta=e_n+(e_k-e_n)+e_j$, as in (2).
        \end{itemize}
        \item $\beta=e_k$. Then the only case we have not considered yet is $\alpha=e_i-e_{i+1}$. Given that $s_\alpha(\beta)$ is supported on $\alpha$, there are a few options for the indices.
        \begin{itemize}
            \item $k<i$. Then we decompose $\beta=(e_i-e_{i+1})+(e_k-e_i)+e_{i+1}$, as in (2).
            \item $k=i+1$. Then we decompose $\beta+\alpha\in \Phi^+$, as in (1).
        \end{itemize}
        \item $\beta=e_j+e_k$. The remaining case is again $\alpha=e_i-e_{i+1}$. Keeping in mind that $s_\alpha(\beta)$ is supported on $\alpha$, there are a few options for the indices.
        \begin{itemize}
            \item $j<i$. We split into cases for $k$.
            \begin{itemize}
                \item $k\neq i, i+1$. Then we decompose $\beta=(e_i-e_{i+1})+(e_j-e_i)+(e_{i+1}+e_k)$, as in (2).
                \item $k=i+1$. Then $\beta+\alpha\in \Phi^+$, as in (1).
                \item $k=i$. Then we decompose $\beta=2(e_i-e_{i+1})+(e_j-e_i)+e_{i+1}+e_{i+1}$, as in (3).
            \end{itemize}
            \item $j=i, k=i+1$. Then we decompose $\beta=(e_i-e_{i+1})+e_{i+1}+e_{i+1}$, as in (2).
            \item $j=i+1$. Then $\beta+\alpha\in \Phi^+$, as in (1).
        \end{itemize}
    \end{itemize}
    \item[Type $C_n$:] Again, if $\alpha, \beta$ are both in the type $A_{n-1}$ subsystem, then we are done by the type $A_n$ case. It remains to consider the case where $\alpha= 2e_n,$ $\beta=2 e_k$, or $\beta=e_j+e_k$ (with $j<k$).
    
    \begin{itemize}
        \item $\alpha=2 e_n$. There are a few options for $\beta$, again keeping in mind that $s_\alpha(\beta)$ is supported on $\alpha$.
        \begin{itemize}
            \item $\beta=2e_k$. Then we decompose $\beta=2 e_n+(e_k-e_n)+(e_k-e_n)$, as in (2).
            \item $\beta=e_j-e_n$. Then $\beta+\alpha\in \Phi^+$, as in (1).
            \item $\beta=e_j+e_k$ with $k<n$. Then we decompose $\beta=2 e_n+(e_j-e_n)+(e_k-e_n)$, as in (2).
        \end{itemize}
        \item $\beta=2e_k$. The only case that is left is $\alpha=e_i-e_{i+1}$. Given that $s_\alpha(\beta)$ is supported on $\alpha$, there are a few options for the indices.
        \begin{itemize}
            \item $k<i$. Then we decompose $\beta=(e_i-e_{i+1})+(e_k-e_i)+(e_k+e_{i+1})$, as in (2).
            \item $k=i+1$. Then $\beta+\alpha\in \Phi^+$, as in (1).
        \end{itemize}
        \item $\beta=e_j+e_k$. The remaining case is again $\alpha=e_i-e_{i+1}$. Note that for $C_n$, $e_s-e_t$ is supported on $e_i-e_{i+1}$ iff $s\leq i$ and $i+1\leq t$, and that $e_s+e_t$ is supported on $e_i-e_{i+1}$ iff $s\leq i$. In fact, this condition is the same for $B_n$, so the cases for the indices $i,j,k$ that are possible are exactly the same as for the analogous case for $B_n$. We can even use all the same assignments to options (1), (2), and (3) as for $B_n$, except for cases where $e_s$ appears as $\alpha+\beta$ in option (1) or in a decomposition (in options (2) or (3)). One can go through the list of cases and confirm that this only happens twice. We now consider these two cases for $C_n$.
        \begin{itemize}
            \item $j<i,k=i$. Then we decompose $\beta=(e_i-e_{i+1})+(e_j-e_i)+(e_{i+1}+e_i)$, as in (2).
            \item $j=i,k=i+1$. Then $\beta+\alpha\in \Phi^+$, as in (1).
        \end{itemize}
    \end{itemize}
\end{itemize}

For the exceptional types $G_2$, $F_4$ and $E_8$, the lemma is checked on a computer. It is easy to check types $G_2$ and $F_4$ by hands but we won't do the tedious case analysis here. The cases of $E_6$ and $E_7$ follow from $E_8$, by identifying these as subsystems of $E_8$.
\end{proof}

We begin by proving that if $w$ contains one of our bad linear patterns, then it is not boolean. The fact that this proposition works so neatly is one of the main motivations for thinking about this in terms of linear patterns (instead of BP patterns).

\begin{proposition}\label{prop:onedir}
For irreducible $\Phi$, if $w\in W(\Phi)$ contains the linear pattern $s_1 s_2 s_1 \in W(A_2)$, $s_2 s_1 s_3 s_2 \in W(A_3)$, or $s_2 s_1 s_3 s_4 s_2 \in W(D_4)$, then $w$ is not boolean.
\end{proposition}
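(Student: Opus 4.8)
The plan is to prove the contrapositive: if $w$ is boolean, then it avoids all three linear patterns. By Proposition~\ref{prop:no-repeated-letters}, being boolean lets me fix a reduced expression $w = s_{j_1}\cdots s_{j_\ell}$ whose letters $j_1, \ldots, j_\ell$ are pairwise distinct. The engine of the argument will be an \emph{adapted inversion basis}. Reindexing the simple roots used by $w$ as $\gamma_k := \alpha_{j_{\ell+1-k}}$ for $1\le k\le \ell$, iterating Lemma~\ref{lem:inv-after-si} along the reduced word gives $I_\Phi(w) = \{\beta^{(1)}, \ldots, \beta^{(\ell)}\}$ with $\beta^{(k)} := s_{\gamma_1}s_{\gamma_2}\cdots s_{\gamma_{k-1}}(\gamma_k)$, and these $\ell$ positive roots are distinct. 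I would then nail down the two structural facts I actually need: $\beta^{(k)}$ is supported inside $\{\gamma_1, \ldots, \gamma_k\}$, and its coefficient on $\gamma_k$ equals exactly $1$. Both follow from the single observation that applying a simple reflection $s_{\gamma_m}$ changes only the $\gamma_m$-coordinate of a vector: hence $\beta^{(k)}$, obtained from $\gamma_k$ by applying $s_{\gamma_1}, \ldots, s_{\gamma_{k-1}}$, stays supported in $\{\gamma_1, \ldots, \gamma_k\}$, and since $\gamma_1, \ldots, \gamma_{k-1}$ are all distinct from $\gamma_k$, the $\gamma_k$-coordinate of $\beta^{(k)}$ is never disturbed and remains $1$.

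Next I would translate each pattern, via Definition~\ref{def:linear} and the inversion sets in Table~\ref{tab:labelDynkin}, into a short list of roots. Containing $s_1 s_2 s_1 \in W(A_2)$ generated at $\beta_1, \beta_2$ forces $\beta_1, \beta_2, \beta_1{+}\beta_2 \in I_\Phi(w)$. Containing $s_2 s_1 s_3 s_2 \in W(A_3)$ generated at $\beta_1,\beta_2,\beta_3$ forces $\beta_1, \beta_3 \in \Phi^+$ and $\beta_2, \beta_1{+}\beta_2, \beta_2{+}\beta_3, \beta_1{+}\beta_2{+}\beta_3 \in I_\Phi(w)$. Containing $s_2 s_1 s_3 s_4 s_2 \in W(D_4)$ generated at $\beta_1, \ldots, \beta_4$ forces the three height-three sums $\beta_1{+}\beta_2{+}\beta_3$, $\beta_1{+}\beta_2{+}\beta_4$, $\beta_2{+}\beta_3{+}\beta_4$ to be positive roots of $\Phi$ and $\beta_2, \beta_1{+}\beta_2, \beta_2{+}\beta_3, \beta_2{+}\beta_4, \beta_1{+}2\beta_2{+}\beta_3{+}\beta_4 \in I_\Phi(w)$. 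The only thing I will use about these is that every root listed is a \emph{nonnegative} integral combination of simple roots of $\Phi$; in particular the non-inversion half of Definition~\ref{def:linear} is not needed for this direction.

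Then I would derive a contradiction from a single coordinate comparison. Name the listed inversions $\beta^{(a)}, \beta^{(b)}, \ldots$ according to their index in the adapted basis — e.g.\ in the $A_3$ case $\beta_2 = \beta^{(a)}$, $\beta_1{+}\beta_2 = \beta^{(b)}$, $\beta_2{+}\beta_3 = \beta^{(c)}$, $\beta_1{+}\beta_2{+}\beta_3 = \beta^{(d)}$. The crucial observation is that whenever $\beta^{(x)} - \beta^{(y)}$ equals one of the positive roots from the previous step, necessarily $x > y$: if $x \le y$, then $\beta^{(x)}$ is supported in $\{\gamma_1, \ldots, \gamma_y\}$ while $\beta^{(y)}$ has $\gamma_y$-coefficient $1$, so the difference has negative $\gamma_y$-coefficient, contradicting nonnegativity. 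Applying this to the differences $\beta^{(b)}{-}\beta^{(a)} = \beta_1$, $\beta^{(d)}{-}\beta^{(b)} = \beta_3$, $\beta^{(d)}{-}\beta^{(c)} = \beta_1$ in type $A_3$, and the analogous differences in the other two types, orders the indices so that one distinguished inversion $\beta^{(\mathrm{top})}$ has the largest index among those appearing (coincidences among the $\beta_i$, such as $\beta_1 = \beta_3$, cause no trouble). Finally I read off the $\gamma_{\mathrm{top}}$-coefficient in one linear identity: $\beta_1{+}\beta_2 = \beta^{(\mathrm{top})}$ in type $A_2$; $\beta^{(b)}{+}\beta^{(c)} = \beta_1{+}2\beta_2{+}\beta_3 = \beta^{(a)}{+}\beta^{(d)}$ in type $A_3$; and $\beta^{(b)}{+}\beta^{(c)}{+}\beta^{(d)}{-}\beta^{(a)} = \beta_1{+}2\beta_2{+}\beta_3{+}\beta_4 = \beta^{(e)}$ in type $D_4$. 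Grouping so that $\beta^{(\mathrm{top})}$ is alone on one side, that side has $\gamma_{\mathrm{top}}$-coefficient $1$, while every constituent of the other side has strictly smaller index and hence $\gamma_{\mathrm{top}}$-coefficient $0$; so $1 = 0$, the desired contradiction.

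The step I expect to be most delicate is the first one: fixing the indexing conventions and pinning down the ``supported below, leading coefficient $1$'' structure of the adapted inversion basis. Once that is in hand, the three cases collapse to a short, essentially mechanical computation — reading the patterns off Table~\ref{tab:labelDynkin} and performing the coordinate comparison. It is worth noting that this argument uses neither irreducibility of $\Phi$ nor Lemma~\ref{lem:decompose}; both of those enter only in proving the converse direction.
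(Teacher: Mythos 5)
Your proof is correct, but it takes a genuinely different route from the paper's. The paper argues by induction on length: it strips descents $s_\alpha$ off $w$ one at a time, shows the linear pattern survives unless $\alpha$ is one of the generating roots $\beta_i$, and in that case uses Lemma~\ref{lem:si=support} to exhibit a reduced expression with two copies of $s_\alpha$, contradicting Proposition~\ref{prop:no-repeated-letters}. You instead give a direct, non-inductive argument: from a repetition-free reduced word you build the standard enumeration $\beta^{(k)}=s_{\gamma_1}\cdots s_{\gamma_{k-1}}(\gamma_k)$ of $I_\Phi(w)$ via Lemma~\ref{lem:inv-after-si}, observe that distinctness of the letters forces the unitriangular structure (support in $\{\gamma_1,\dots,\gamma_k\}$, leading coefficient $1$ on $\gamma_k$), and then kill each pattern with a single linear identity among its inversions by comparing the $\gamma_{\mathrm{top}}$-coefficient. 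I checked the details: the ordering claims ($x>y$ whenever $\beta^{(x)}-\beta^{(y)}$ is a positive root), the handling of possible coincidences among the $\beta_i$, and the three closing identities all go through, and you correctly only need the ``positive roots to positive roots'' and ``inversions to inversions'' halves of Definition~\ref{def:linear}. What each approach buys: the paper's descent-stripping induction mirrors the machinery it reuses for the converse direction (Lemma~\ref{lem:maintech}) and yields the intermediate fact that patterns persist under removing descents away from the generators; your argument is self-contained, needs neither induction nor irreducibility, and isolates exactly what makes these three patterns ``bad'' --- each admits an affine relation among its inversions whose differences are positive roots, which is incompatible with the triangular structure of a boolean inversion set. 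One could even imagine your criterion generalizing to detect other obstructions of this shape.
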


\begin{proof}
Say for a contradiction that $w$ contains $\pi$, one of these $3$ linear patterns, but is nevertheless boolean. We first claim that it suffices to show that for any simple root $\alpha$ which is an inversion of $w$, $w s_\alpha$ still contains the pattern $\pi$. To see this, note that $ws_\alpha$ is still boolean, since there is a reduced expression for $w$ ending in $s_\alpha$ (this is Corollary 1.4.6. in \cite{bjornerbrenti}), and then by induction, the identity Weyl group element contains the pattern $\pi$, which is a contradiction. Suppose $w$ contains the linear pattern $\pi$ generated at the positive roots $\beta_1, \ldots, \beta_k$, none of which is equal to $\alpha$. Then by Lemma~\ref{lem:inv-after-si}, $w s_\alpha$ contains $\pi$ generated at the positive roots $s_\alpha \beta_1, \ldots, s_\alpha \beta_k$. So the only way to get rid of $\pi$ is if $\beta_i=\alpha$ for at least one $i\in [k]$. It remains to do some casework.

\begin{itemize}
    \item If $\pi=s_1 s_2 s_1 \in W(A_2)$, let $\alpha'$ be the other root generating the linear pattern we are considering (in addition to $\alpha$). Then $\alpha+\alpha'\in \Phi^+$, so since $\Phi$ is irreducible, $\langle\alpha,\alpha'\rangle<0$, so $s_\alpha \alpha'=\alpha'-\left(2\langle \alpha, \alpha'\rangle/\langle\alpha, \alpha\rangle\right)\alpha$ is supported on $\alpha$. But $s_\alpha(\alpha')$ is an inversion of $w s_\alpha$, so by Lemma~\ref{lem:si=support}, any reduced expression of $w s_\alpha$ contains $s_\alpha$. However, this implies that there is a reduced expression for $w$ that contains two copies of $s_\alpha$, which is impossible.
    \item If $\pi= s_2 s_1 s_3 s_2\in W(A_3)$, then since $\alpha$ is an inversion, it must be the middle simple root $\beta_2$, as the middle simple root is the only simple root which is an inversion of $\pi$. In analogy to what we saw before, $\beta_1+\beta_2\in \Phi^+$ and $\beta_2+\beta_3\in \Phi^+$, so $\langle \beta_2, \beta_1\rangle$ and $\langle \beta_2,\beta_3\rangle<0$, from which we get that 
    \[s_\alpha(\beta_1+\beta_2+\beta_3)=\beta_1-\left(2\langle \alpha, \beta_1\rangle/\langle\alpha, \alpha\rangle\right)\alpha-\alpha+\beta_3-\left(2\langle \alpha, \beta_3\rangle/\langle\alpha, \alpha\rangle\right)\alpha\]
    
    is supported on $\alpha$. But it is also an inversion of $w s_\alpha$, so we get a contradiction as before.
    \item If $\pi=s_2 s_1 s_3 s_4 s_2\in W(D_4)$, then since $\alpha$ is an inversion, it must be $\beta_2$, since again this is the only simple root which is an inversion of $\pi$, and we again reach a contradiction because $s_\alpha(\beta_1+2\beta_2+\beta_3+\beta_4)$ is an inversion of $ws_{\alpha}$ which is supported on $\alpha$. 
\end{itemize}

\end{proof}

For the direction that $w$ avoids the $3$ bad patterns implies that $w$ is boolean, our main strategy is induction on the size of $\bigcup_{\beta\in I_\Phi(w)} \Supp(\beta)$ (the number of simple roots supporting some inversion of $w$) via the following technical lemma.
\begin{lemma}\label{lem:maintech}
If $w\in W(\Phi)$ avoids the $3$ bad patterns $s_1 s_2 s_1 \in W(A_2)$, $s_2 s_1 s_3 s_2 \in W(A_3)$, and $s_2 s_1 s_3 s_4 s_2 \in W(D_4)$, and $\alpha\in I_{\Phi}(w)$ is a simple root, then $I_{\Phi}(ws_{\alpha})$ contains no roots supported on $\alpha$ and moreover, $ws_{\alpha}$ does not contain any of these $3$ bad patterns.
\end{lemma}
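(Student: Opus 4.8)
The plan is to prove the two halves of the lemma separately, writing $I:=I_\Phi(w)$. Since $\alpha\in I$ is simple it is a right descent of $w$, so $\ell(ws_\alpha)=\ell(w)-1$, and applying Lemma~\ref{lem:inv-after-si} to $ws_\alpha$ gives $I_\Phi(w)=s_\alpha I_\Phi(ws_\alpha)\cup\{\alpha\}$. In particular $\alpha\notin I_\Phi(ws_\alpha)$, and $s_\alpha$ permutes $\Phi^+\setminus\{\alpha\}$, carrying $I_\Phi(ws_\alpha)$ onto $I\setminus\{\alpha\}$ and $\Phi^+\setminus(I_\Phi(ws_\alpha)\cup\{\alpha\})$ onto $\Phi^+\setminus I$; I will use these transport facts throughout.

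For the first half (that $I_\Phi(ws_\alpha)$ has no root supported on $\alpha$), I argue by contradiction: if some $\delta\in I_\Phi(ws_\alpha)$ is supported on $\alpha$, set $\beta:=s_\alpha\delta$, so $\beta\in I\setminus\{\alpha\}$ is a positive root with $\beta\ne\alpha$ and $s_\alpha\beta=\delta$ a positive root supported on $\alpha$ --- exactly the hypothesis of Lemma~\ref{lem:decompose}. The key device is the following sub-lemma: if $\alpha\in I$, if $\gamma_1,\gamma_2\in\Phi^+\setminus I$, and if $\alpha+\gamma_1$, $\alpha+\gamma_2$, $\alpha+\gamma_1+\gamma_2$ are positive roots with $\alpha+\gamma_1+\gamma_2\in I$, then biconvexity (Proposition~\ref{prop:biconvex}) forces $\alpha+\gamma_1,\alpha+\gamma_2\in I$, and the linear map sending the simple roots $\mu_1,\mu_2,\mu_3$ of $A_3$ to $\gamma_1,\alpha,\gamma_2$ respectively exhibits $s_2s_1s_3s_2\in W(A_3)$ in $w$. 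Now I run through the three outcomes of Lemma~\ref{lem:decompose}. In outcome (1), $\alpha,\beta,\alpha+\beta\in I$ exhibit $s_1s_2s_1\in W(A_2)$. In outcome (2), $\beta=\alpha+\gamma_1+\gamma_2$: if some $\gamma_i\in I$ then biconvexity gives $\alpha+\gamma_i\in I$ and $\alpha,\gamma_i,\alpha+\gamma_i$ exhibit $s_1s_2s_1$; otherwise the sub-lemma applies. Outcome (3), $\beta=2\alpha+\gamma_1+\gamma_2+\gamma_3$ with the positivity relations of Lemma~\ref{lem:decompose}(3), is the substantive case, handled by a cascade: if $\beta-\alpha\in I$ then $\alpha,\beta-\alpha,\beta$ exhibit $s_1s_2s_1$; if some $\gamma_i\in I$ then $\alpha,\gamma_i,\alpha+\gamma_i$ exhibit $s_1s_2s_1$; if some $\alpha+\gamma_i+\gamma_j\in I$ ($i\ne j$) the sub-lemma exhibits $s_2s_1s_3s_2$; and otherwise biconvexity applied to the decompositions $\beta=(\alpha+\gamma_i)+(\alpha+\gamma_j+\gamma_k)$ forces all three $\alpha+\gamma_i\in I$, and the linear map sending the center of the $D_4$ diagram to $\alpha$ and its three leaves to $\gamma_1,\gamma_2,\gamma_3$ exhibits $s_2s_1s_3s_4s_2\in W(D_4)$ in $w$ --- here one checks directly that the five inversions of the pattern are sent into $\{\alpha,\alpha+\gamma_1,\alpha+\gamma_2,\alpha+\gamma_3,\beta\}\subseteq I$ and its seven non-inversions into the set consisting of $\gamma_1,\gamma_2,\gamma_3$, the roots $\alpha+\gamma_i+\gamma_j$ with $i\ne j$, and $\beta-\alpha$, all of which lie in $\Phi^+\setminus I$. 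Every branch contradicts the hypothesis that $w$ avoids the three bad patterns.

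For the second half (that $ws_\alpha$ avoids the three patterns), suppose $ws_\alpha$ contains one of them, $\pi\in W(R)$ with $R\in\{A_2,A_3,D_4\}$, generated at positive roots $\delta_1,\dots,\delta_k$ via a linear map $f\colon R\to\Phi$. If no $\delta_i$ equals $\alpha$, then $\alpha\notin f(R^+)$, because a nonnegative integer combination of positive roots of $\Phi$ can equal the simple root $\alpha$ only when it equals $\alpha$ itself (a reduced root system contains no positive root that is a proper multiple of a simple root); hence $s_\alpha\circ f$ maps $R^+$ into $\Phi^+$, and by the transport facts it maps $I_R(\pi)$ into $I$ and $R^+\setminus I_R(\pi)$ into $\Phi^+\setminus I$, so $w$ itself contains $\pi$, contradicting the hypothesis on $w$. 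If instead some $\delta_i=\alpha$, then the corresponding simple root $\mu_i$ of $R$ cannot be an inversion of $\pi$ (as $\alpha\notin I_\Phi(ws_\alpha)$), so $\mu_i$ is a leaf of the Dynkin diagram of $R$; this is impossible for $s_1s_2s_1\in W(A_2)$, both of whose simple roots are inversions, and for the other two patterns the central simple root $\mu_2$ is an inversion with $\mu_i+\mu_2\in I_R(\pi)$, so $\delta_2$ and $\delta_i+\delta_2=\alpha+\delta_2$ both lie in $I_\Phi(ws_\alpha)$. Then $\alpha$, $s_\alpha\delta_2$, and $s_\alpha(\alpha+\delta_2)=s_\alpha\delta_2-\alpha$ are all positive inversions of $w$ (the latter two by the transport facts), and since $\alpha+(s_\alpha\delta_2-\alpha)=s_\alpha\delta_2$, they exhibit $s_1s_2s_1\in W(A_2)$ in $w$, again a contradiction.

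The main obstacle is outcome (3) of Lemma~\ref{lem:decompose} in the first half: one must carefully track the inversion/non-inversion status of the roots $\gamma_i$, $\alpha+\gamma_i$, $\alpha+\gamma_i+\gamma_j$, and $\beta-\alpha$, peeling off the sub-cases in which a smaller $A_2$ or $A_3$ pattern appears before reaching the generic sub-case where the full $D_4$ pattern emerges; isolating the $A_3$ sub-lemma is what keeps this bookkeeping tidy. The rest --- in particular the second half --- is routine once one observes that $s_\alpha\circ f$ is again a linear-pattern witness precisely when $\alpha$ is not one of its generators.
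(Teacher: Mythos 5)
Your proof is correct and follows essentially the same route as the paper: for the first half you transport the offending inversion back to $w$ via $s_\alpha$, invoke Lemma~\ref{lem:decompose}, and in each of its three outcomes locate one of the three linear patterns by the same biconvexity bookkeeping (your only deviations are cosmetic --- peeling off the $\beta-\alpha\in I$ subcase at the start of outcome (3) rather than deriving $\beta-\alpha\notin I$ at the end, and isolating the $A_3$ step as a sub-lemma); for the second half you likewise push the pattern witness through $s_\alpha$, merely replacing the paper's appeal to the first half in the $\delta_i=\alpha$ subcase with a direct $A_2$ contradiction, and adding the (correct, and slightly more careful than the paper's) observation that $\alpha\notin f(R^+)$ is what makes $s_\alpha\circ f$ positivity-preserving.
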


\begin{proof}[Proof of Lemma~\ref{lem:maintech}]

Say for a contradiction that $w$ avoids the $3$ bad patterns and $\alpha\in I_{\Phi}(w)$ is a simple root, but there is a root $\gamma\in  I_{\Phi}(ws_{\alpha})$ supported on $\alpha$. We write $\beta=s_\alpha(\gamma)$, and note that $\gamma=s_\alpha(\beta)$, and that $s_\alpha(\beta)\in I_{\Phi}(ws_\alpha)\implies \beta\in I_\Phi(w)$  by Lemma~\ref{lem:inv-after-si}. By Lemma~\ref{lem:decompose} applied to these $\alpha,\beta$, we are now in one of the following three cases:

\begin{enumerate}
    \item $\alpha+\beta\in \Phi^+$. Note that $\alpha, \beta$ are inversions of $w$, and biconvexity implies that $\alpha+\beta$ is then also an inversion of $w$. So $w$ contains an $s_1 s_2 s_1\in W(A_2)$ generated at $\alpha, \beta$.
    \item $\beta=\gamma_1+\alpha+\gamma_2$ with $\gamma_1+\alpha, \alpha+\gamma_2\in \Phi^+$. Then if $\gamma_1$ or $\gamma_2$ is an inversion of $w$, $w$ contains an $s_1 s_2 s_1\in W(A_2)$ at respectively $\gamma_1, \alpha$ or $\alpha, \gamma_2$. If neither is an inversion, then we get that $\gamma_1+\alpha$ and $\alpha+\gamma_2$ are both inversions, since otherwise we would get a contradiction with biconvexity from $\gamma_1, \alpha+\gamma_2, \gamma_1+\alpha+\gamma_2$ or $\gamma_1+\alpha, \gamma_2, \gamma_1+\alpha+\gamma_2$. We have now determined whether all the relevant roots are inversions or non-inversions of $w$ to conclude that $w$ contains an $s_2 s_1 s_3 s_2\in W(A_3)$ generated at $\gamma_1, \alpha, \gamma_2$.
    \item $\beta=\gamma_1+2\alpha+\gamma_2+\gamma_3$. Then if $\gamma_1$, $\gamma_2$, or $\gamma_3$ is an inversion of $w$, $w$ contains a $s_1 s_2 s_1 \in W(A_2)$ at respectively $\gamma_1, \alpha$ or $\gamma_2, \alpha$ or $\gamma_3, \alpha$. We restrict to the remaining case that $\gamma_1, \gamma_2, \gamma_3$ are all non-inversions. If any of $\gamma_1+\alpha+\gamma_2, \gamma_1+\alpha+\gamma_3, \gamma_2+\alpha+\gamma_3$ is an inversion, then $w$ contains a bad pattern by case (2). 
    So we restrict to the case where these three roots are also non-inversions. Now if $\gamma_1+\alpha$ is a non-inversion, then we get a contradiction with biconvexity considering $\gamma_1+\alpha, \gamma_2+\alpha+\gamma_3$. So $\gamma_1+\alpha$ and analogously $\gamma_2+\alpha, \gamma_3+\alpha$ are inversions.
    Finally, biconvexity implies that $(\gamma_1+\alpha+\gamma_2)+\gamma_3$ is not an inversion. We have now determined whether all the relevant roots are inversions or non-inversions of $w$ to conclude that $w$ contains $s_2 s_1 s_3 s_4 s_2\in W(D_4)$.
\end{enumerate}

Given the first part of the lemma, we can now deduce the second part, i.e. that $w s_{\alpha}$ does not contain any of our $3$ bad patterns. Suppose it contains the bad pattern $\pi$ with simple roots mapping to $\beta_1, \ldots, \beta_k\in \Phi^+$ (where $k=2$, $k=3$, or $k=4$ depending on the pattern). If some $\beta_i=\alpha$, then we can note for each of our patterns that there is a root in $I_{\Phi}(ws_{\alpha})$ covering $\alpha$, which is impossible. So no $\beta_i$ is $\alpha$. But then it follows from Lemma~\ref{lem:inv-after-si} that $s_\alpha(\beta_1), \ldots, s_\alpha(\beta_k)$ generate a pattern $\pi$ in $w$, which is a contradiction. So $w s_{\alpha}$ also avoids the $3$ bad patterns.
\end{proof}

From here, the proof of the our linear pattern characterization of boolean elements (Theorem~\ref{thm:linear}) is short.

\begin{proof}[Proof of Theorem~\ref{thm:linear}]
Proposition~\ref{prop:onedir} gives one direction. As for the other direction, i.e. that $w$ which does not contain a bad pattern is boolean, we prove it by inducting on the size of $\bigcup_{\beta\in I_\Phi(w)} \Supp(\beta)$. The base case is trivial. As for the inductive step, we find a simple root $\alpha\in I_\Phi(w)$ (which exists e.g. by biconvexity), and consider $w s_\alpha$.  By Lemma~\ref{lem:maintech}, $\alpha\not\in \bigcup_{\beta\in I_\Phi(ws_\alpha)} \Supp(\beta)$. As multiplying a root by $s_\alpha$ only changes the coefficient of $\alpha$, all other simple roots in $\bigcup_{\beta\in I_\Phi(w)} \Supp(\beta)$ are also in $\bigcup_{\beta\in I_\Phi(ws_\alpha)} \Supp(\beta)$. Putting these observations together, we get that \[\left\lvert\bigcup_{\beta\in I_\Phi(ws_\alpha)} \Supp(\beta)\right\rvert=\left\lvert\bigcup_{\beta\in I_\Phi(w)} \Supp(\beta)\right\rvert-1.\]
By Lemma~\ref{lem:maintech}, $ws_\alpha$ also avoids the bad patterns. So by our inductive hypothesis, $ws_\alpha$ is boolean. We pick a reduced word for $ws_\alpha$. By Lemma~\ref{lem:si=support}, this reduced word does not contain $s_\alpha$. So if we add $s_\alpha$ to the end of this word, we get a word for $w$ in which each generator appear at most once. Since $s_\alpha$ is an inversion of $w$, this word is reduced. So $w$ is also boolean. This completes the induction.
\end{proof}

\subsection{From linear patterns to BP patterns}\label{sub:lineartoBP}
In this section, we deduce the characterization of boolean elements in terms of BP pattern avoidance (Theorem \ref{thm:main}) from the characterization in terms of linear pattern avoidance (Theorem \ref{thm:linear}). In the next lemma, we show that containing a linear pattern is equivalent to containing a corresponding set of BP patterns (each itself containing this linear pattern). After this lemma, most of this section is some casework on a finite number of Weyl groups to figure out the explicit sets of BP patterns that correspond to our linear patterns.

\begin{lemma}\label{lem:lintoBP}
Let $\Phi$ and $R$ be irreducible root systems, let $w\in W(\Phi)$ and $\pi \in W(R)$, and let $k$ be the rank of $R$. Then $w$ contains the linear pattern $\pi$ if and only if $w$ contains at least one BP pattern in the set

\[P_\pi:=\{\sigma \in \bigcup_{\substack{\Theta \text{ irreducible}\\ \text{rank}\left(\Theta\right)\leq k}}W(\Theta) \colon \sigma \text{ contains the linear pattern }\pi\}.\]

\end{lemma}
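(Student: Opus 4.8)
The plan is to prove the two directions of the biconditional separately, with the reverse direction being essentially immediate and the forward direction carrying all the content.

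\medskip

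\noindent\textbf{The easy direction ($\Leftarrow$).} Suppose $w$ contains some BP pattern $\sigma \in P_\pi$, say $\sigma = w|_{\Phi'}$ for a subsystem $\Phi' = \Phi \cap E'$ isomorphic (as a positive-root-preserving root system isomorphism) to some $\Theta$. By definition of $P_\pi$, the element $\sigma$ contains the linear pattern $\pi$, witnessed by a linear map $R \to \Theta \cong \Phi'$ sending $R^+ \to (\Phi')^+$, inversions to inversions, non-inversions to non-inversions. I would then compose this with the inclusion $E' \hookrightarrow E$; the key observation is that since $\Phi' = \Phi \cap E'$ and $I_\Phi(w) \cap E' = I_{\Phi'}(\sigma)$ (this is exactly how the restriction $w|_{\Phi'}$ is defined, via Proposition~\ref{prop:biconvex}), a positive root of $\Phi'$ is an inversion of $\sigma$ iff it is an inversion of $w$, and similarly for non-inversions. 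Hence the composed linear map $R \to E$ lands inside $\Phi$, sends $R^+$ into $\Phi^+$, sends $I_R(\pi)$ into $I_\Phi(w)$ and $R^+ \setminus I_R(\pi)$ into $\Phi^+ \setminus I_\Phi(w)$, so $w$ contains the linear pattern $\pi$.

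\medskip

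\noindent\textbf{The hard direction ($\Rightarrow$).} Suppose $w$ contains the linear pattern $\pi$, witnessed by a linear map $f\colon R \to E$ with image spanning a subspace $V \subseteq E$; set $\Phi' := \Phi \cap V$ and $E' := V$. This $\Phi'$ is a (possibly reducible) root system with positive system $(\Phi')^+ = \Phi^+ \cap V$, and $w|_{\Phi'} \in W(\Phi')$ is defined with $I_{\Phi'}(w|_{\Phi'}) = I_\Phi(w) \cap V$. The plan is: (1) show that $w|_{\Phi'}$ still contains the linear pattern $\pi$ — this is immediate since $f$ already factors through $V$ and the inversion/non-inversion conditions only got restricted along a set ($(\Phi')^+$) on which they agree with those of $w$; (2) decompose $\Phi' = \Phi'_1 \sqcup \cdots \sqcup \Phi'_m$ into irreducible components, write $w|_{\Phi'} = u_1 \cdots u_m$ correspondingly with $u_j \in W(\Phi'_j)$, and observe $\mathrm{rank}(\Phi'_j) \le \mathrm{rank}(\Phi') = \dim V \le \mathrm{rank}(R) = k$; (3) argue that the linear pattern $\pi$, being contained in the product $w|_{\Phi'}$, must in fact be contained in one single component $u_{j_0}$. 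Step (3) is the crux: here I would use that $R$ is irreducible, so $f(R)$ is "connected" in the sense that the image cannot be split across orthogonal subspaces — more precisely, the inversion set $I_R(\pi)$ together with its biconvexity structure, pulled through $f$, must land in roots all supported within one irreducible component, because if two roots $f(\gamma), f(\gamma')$ lay in different components $\Phi'_{j_1}, \Phi'_{j_2}$ they would be orthogonal, contradicting the irreducibility of $R$ (any two simple roots of $R$ are connected by a path in the Dynkin diagram, and $f$ sends roots with negative inner product to roots with negative inner product... actually one must be careful, since $f$ need not preserve inner products — instead I would argue via connectivity of the support: the union of supports of the $f(\alpha_i)$, $\alpha_i \in \Delta(R)$, must be connected in the Dynkin diagram of $\Phi'$, because adjacent simple roots of $R$ have a root sum or difference that is a root, forcing their images to interact). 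Once $\pi$ is contained in $u_{j_0} \in W(\Phi'_{j_0})$ with $\Phi'_{j_0}$ irreducible of rank $\le k$, we take $\Theta := \Phi'_{j_0}$ and $\sigma := u_{j_0}$, which is a BP pattern of $w$ (the restriction to the subspace spanned by $\Phi'_{j_0}$) lying in $P_\pi$.

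\medskip

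\noindent\textbf{Main obstacle.} The delicate point is step (3) of the forward direction: showing that a linear pattern contained in a reducible Weyl group element is already contained in one irreducible factor. The subtlety is precisely that linear maps are not required to be angle-preserving or injective, so the naive argument "irreducible maps to irreducible" does not literally apply to $f$. The right framing is to track where the simple roots of $R$ go and use biconvexity: for adjacent $\alpha_i, \alpha_{i+1}$ in the Dynkin diagram of $R$, one of $\alpha_i + \alpha_{i+1}$ or a higher combination is a root of $R$, and its image under $f$ is a root of $\Phi'$ that is an inversion-or-not consistently; iterating along Dynkin-diagram paths of $R$ forces $\bigcup_i \Supp(f(\alpha_i))$ to be connected in the Dynkin diagram of $\Phi'$, hence contained in a single component. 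I expect this to require a short but careful lemma; everything else is bookkeeping with Proposition~\ref{prop:biconvex} and the definition of the restriction map.
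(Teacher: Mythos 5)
Your proposal is correct and follows essentially the same route as the paper: the backward direction is the same composition with the inclusion, and the forward direction likewise restricts to the $\mathbb{R}$-span of the image and uses the irreducibility of $R$ (adjacent simple roots of $R$ have a root sum, whose image is a root of $\Phi$, forcing the images of all simple roots into a single irreducible component). Your support/connectivity phrasing of that last step is if anything a bit more careful than the paper's inner-product parenthetical, since, as you note, the linear map need not preserve angles.
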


\begin{proof}
For the forward implication, restrict to the $\mathbb{R}$-span of the image of $R$ in $\Phi$. Denoting this subspace root system by $\Theta$, we define $\sigma=w|_{\Theta}$. Note that $\Theta$ has rank at most $k$, $\Theta$ is irreducible (this follows by considering the images of simple roots $\alpha_i$ of $R$ and deducing from $\alpha_i+\alpha_j\in \Phi^+\implies \langle \alpha_i,\alpha_j\rangle<0$ that they must all lie in the same irreducible component) and $\sigma$ contains the linear pattern $\pi$, so $\sigma\in P_{\pi}$. Hence, $w$ contains some BP pattern in $P_\pi$.

For the backward implication, suppose $w$ contains the BP pattern $\sigma\in P_\pi$, $\sigma\in W(\Theta)$. Then there is a linear map $R\to \Theta$ demonstrating that $\sigma$ contains the linear pattern $\pi$. Composing with the inclusion $\Theta\to \Phi$, we see that $\pi$ is also a linear pattern of $w$.
\end{proof}

Firstly, observe that $P_\pi$ is finite since there are only finitely many irreducible root systems of rank at most $k$, each having a finite Weyl group. Secondly, observe that it follows from the lemma that avoiding the linear patterns $\pi_1, \ldots, \pi_m$ is equivalent to avoiding all BP patterns in $P_{\pi_1}\cup \ldots\cup P_{\pi_m}$. Finally, observe that if there are $\sigma_1, \sigma_2\in P$ with $\sigma_1$ being a BP pattern in $\sigma_2$, then $w$ containing a BP pattern in $P$ is equivalent to $w$ containing a BP pattern in $P\setminus \{\sigma_2\}$. In other words, we can get rid of redundant elements. To make this precise, for any set $P$ or Weyl group elements, we define the \emph{reduction of $P$}, denoted $\text{red}(P)$, as:
\[\text{red}(P)=\{w\in P\colon w \text{ does not contain any BP pattern }\pi\neq w, \pi \in P\}.\]

With this notation, our observation is that avoiding all BP patterns in $P$ is equivalent to avoiding all BP patterns in $\text{red}(P)$.

For any two sets $P,S$ of Weyl group elements (which we think of as BP patterns), we also define the \emph{reduction of $P$ mod $S$}, denoted $P/S$, is the set of elements of $P$ which do not contain a BP pattern in $S$, i.e.
\[P/S:=\{w\in P\colon w \text{ does not contain any BP pattern } \pi \in S\}.\]

We now present $P_1:=P_{\pi_1}$ where $\pi_1=s_1 s_2 s_1\in W(A_2)$. We find this by just checking all elements of Weyl groups of irreducible root systems of rank at most $2$.

\begin{lemma}\label{lem:pi1}
The set of BP patterns corresponding to $\pi_1$, $P_1:=P_{\pi_1}$, consists of
\begin{itemize}
    \item $s_1 s_2 s_1\in W(A_2)$;
    \item $s_2 s_1 s_2, s_1 s_2 s_1 s_2 \in W(B_2)$;
    \item $s_2 s_1 s_2, s_1 s_2 s_1 s_2, s_2 s_1 s_2 s_1, s_1 s_2 s_1 s_2 s_1, s_2 s_1 s_2 s_1 s_2 , s_1 s_2 s_1 s_2 s_1 s_2 \in W(G_2)$.
\end{itemize}

\end{lemma}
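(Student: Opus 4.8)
The plan is to compute $P_{\pi_1}$ directly by enumerating, for each irreducible root system $\Theta$ of rank at most $2$, all elements $\sigma\in W(\Theta)$ and checking whether $\sigma$ contains the linear pattern $\pi_1=s_1s_2s_1\in W(A_2)$. By Lemma~\ref{lem:lintoBP}, $P_{\pi_1}$ is exactly the set of such $\sigma$, so this is a finite computation. The rank-at-most-$2$ irreducible root systems are $A_1$, $A_2$, $B_2=C_2$, and $G_2$ (and $A_1\times A_1$ is reducible, so excluded).

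First I would dispose of the small cases. In $W(A_1)$ there are only $\mathrm{id}$ and $s_1$, neither of which can contain $\pi_1$: a linear pattern generated at $\beta_1,\beta_2$ requires $\beta_1,\beta_2,\beta_1+\beta_2$ to all be inversions (from $I_{A_2}(s_1s_2s_1)=\{\alpha_1,\alpha_2,\alpha_1+\alpha_2\}$), so in particular $w$ must have at least $3$ inversions; $A_1$ has only one positive root. For $A_2$ itself, the longest element $s_1s_2s_1$ has inversion set equal to all of $(A_2)^+$, so it contains $\pi_1$ via the identity map; and one checks no shorter element of $W(A_2)$ has three inversions, so $s_1s_2s_1\in W(A_2)$ is the only element from $A_2$.

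The bulk of the work is $B_2$ and $G_2$. For $B_2$, I would list the $8$ elements by length and, for each, determine $I_{B_2}(\sigma)$, then check whether there exist two positive roots $\beta_1,\beta_2$ with $\beta_1,\beta_2,\beta_1+\beta_2$ all inversions and the remaining positive roots all non-inversions is \emph{not} required here — re-examining Definition~\ref{def:linear}, for $\pi_1$ we need the images of $\{\alpha_1,\alpha_2,\alpha_1+\alpha_2\}$ to be inversions and the image of $(A_2)^+\setminus I = \emptyset$, so there is no non-inversion condition, just: $\sigma$ contains $\pi_1$ iff there are positive roots $\beta_1,\beta_2$ of $\Phi$ with $\beta_1+\beta_2\in\Phi^+$ and $\beta_1,\beta_2,\beta_1+\beta_2\in I_\Phi(\sigma)$. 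One then verifies this holds precisely for $s_2s_1s_2$ and $s_1s_2s_1s_2$ in $W(B_2)$ (the elements of length $\geq 3$ with a suitable triple among their inversions), and fails for all elements of length $\leq 2$ and for the relevant length-$3$ element $s_1 s_2 s_1$, whose inversion set $\{\alpha_1,\alpha_1+\alpha_2,\alpha_1+2\alpha_2\}$ contains no such additive triple (with $\alpha_1$ long, $\alpha_2$ short). For $G_2$, I would do the same enumeration over all $12$ elements: the identity and the $6$ positive roots give the reflections and short elements, which one checks contain no additive triple of inversions, while all $6$ elements of length $\geq 3$ do contain such a triple, giving $s_2s_1s_2$, $s_1s_2s_1s_2$, $s_2s_1s_2s_1$, $s_1s_2s_1s_2s_1$, $s_2s_1s_2s_1s_2$, $s_1s_2s_1s_2s_1s_2$ — note $s_1s_2s_1$ is again excluded by the same asymmetry argument as in $B_2$ (its three inversions are the three roots supported with coefficient-pattern blocking any additive triple). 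Assembling these lists yields the stated $P_1$.

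The main obstacle is simply the bookkeeping: one must correctly compute inversion sets in $B_2$ and $G_2$, being careful about which root is long and which is short (this matters for $s_1s_2s_1$ versus $s_2s_1s_2$), and then carefully scan each inversion set for an additive triple $\beta_1,\beta_2,\beta_1+\beta_2$. Since these are small groups, a short table listing each $\sigma$, its reduced word, and $I_\Phi(\sigma)$ makes the verification transparent, and it is easy to double-check on a computer. No conceptual difficulty arises beyond applying Lemma~\ref{lem:lintoBP} and Definition~\ref{def:linear}.
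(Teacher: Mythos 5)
Your proposal is correct and follows essentially the same route as the paper: reduce containment of $\pi_1$ to the existence of an additive triple $\beta_1,\beta_2,\beta_1+\beta_2$ of inversions (the non-inversion condition being vacuous since $(A_2)^+\setminus I_{A_2}(\pi_1)=\emptyset$ — wait, it is $\{\,\}$ only for the images of non-inversions, which is indeed empty here), rule out elements with fewer than three inversions, and check the remaining elements of $W(A_1),W(A_2),W(B_2),W(G_2)$ by hand, excluding $s_1s_2s_1$ in $B_2$ and $G_2$ exactly as the paper does. The only blemish is the phrasing ``all $6$ elements of length $\geq 3$'' for $G_2$ (there are $7$ such elements, of which $6$ contain the triple), but you explicitly exclude $s_1s_2s_1$ and list the correct six, so this is a slip of wording rather than a gap.
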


\begin{proof}
Let us begin by observing that the linear map demonstrating containment of $\pi_1$ cannot send both simple roots to the same image, as $\beta\in \Phi^+\implies \beta+\beta=2\beta\not\in\Phi^+$.

We go through all the irreducible root systems of rank at most $2$. There are $4$ of these: $A_1,A_2,B_2, G_2$. By the observation above, there are no $w\in W(A_1)$ containing $\pi_1$.

For $A_2$, suppose the linear map demonstrating containment of $\pi_1$ takes the simple roots to $\beta_1,\beta_2$. Then by the observation above, the only option is that $\beta_1, \beta_2$ are the two simple roots of $A_2$. This determines the inversions as well (namely, $\beta_1,\beta_2,\beta_1+\beta_2$ are all inversions). The only $w\in W(A_2)$ with these inversions is $w=s_1 s_2 s_1$, which indeed contains $\pi_1$ (itself) as a pattern.

For $B_2$, let us call the simple roots $\alpha_1$ and $\alpha_2$. $W(B_2)$ has $8$ elements. $5$ of these have strictly fewer than $3$ inversions (these are $\mathrm{id}$ $s_1$, $s_2$, $s_1 s_2$, and $s_2 s_1$), and we can immediately conclude that these do not contain a linear $\pi_1$, since the observation above implies that $\beta_1,\beta_2,\beta_1+\beta_2$ are all distinct inversions. We check the $3$ remaining $w\in W(B_2)$:
\begin{itemize}
    \item $s_1 s_2 s_1\in W(B_2)$ has the three inversions $\alpha_1$, $\alpha_1+\alpha_2$, $\alpha_1+2\alpha_2$. No two of these add up to an inversion, so this does not contain a linear $\pi_1$.
    \item $s_2 s_1 s_2\in W(B_2)$ has the three inversions $\alpha_2, \alpha_1+\alpha_2, \alpha_1+2\alpha_2$. It contains a linear $\pi_1$ generated at $\alpha_2,\alpha_1+\alpha_2$.
    \item $s_1 s_2 s_1 s_2\in W(B_2)$ has all positive roots as inversions. It contains a linear $\pi_1$ generated at $\alpha_1,\alpha_2$.
\end{itemize}

This leaves us with $G_2$. Let us say that the simple roots of $G_2$ are $\alpha_1, \alpha_2$. $W(G_2)$ has $12$ elements, out of which $5$ we can rule out immediately on account of having strictly fewer than $3$ inversions -- these are $\mathrm{id}, s_1, s_2, s_1 s_2$, and $s_2 s_1$. Let us go through the rest:
\begin{itemize}
    \item $s_1 s_2 s_1$ has the $3$ inversions $2\alpha_1+3\alpha_2$, $\alpha_1+\alpha_2$, $\alpha_1$. No two of these add up to an inversion, so this does not contain a linear $\pi_1$.
    \item $s_2 s_1 s_2$ has the $3$ inversions $\alpha_1+2\alpha_2$, $\alpha_1+3\alpha_2$, $\alpha_2$. It contains a linear $\pi_1$ generated at $\alpha_1+2\alpha_2,\alpha_2$.
    \item $s_1 s_2 s_1 s_2$ has the $4$ inversions $2\alpha_1+3\alpha_2$, $\alpha_1+2\alpha_2$, $\alpha_1+3\alpha_2$, $\alpha_2$. It contains a linear $\pi_1$ generated at $\alpha_1+2\alpha_2,\alpha_2$.
    \item $s_2 s_1 s_2 s_1$ has the $4$ inversions $\alpha_1+2\alpha_2$, $2\alpha_1+3\alpha_2$, $\alpha_1+\alpha_2$, $\alpha_1$. It contains a linear $\pi_1$ generated at $\alpha_1+2\alpha_2,\alpha_1+\alpha_2$.
    \item $s_1 s_2 s_1 s_2 s_1$ has all positive roots other than $\alpha_2$ as inversions. It contains a linear $\pi_1$ generated at $\alpha_1+2\alpha_2, \alpha_1+\alpha_2$.
    \item $s_2 s_1 s_2 s_1 s_2$ has all positive roots other than $\alpha_1$ as inversions. It contains a linear $\pi_1$ generated at $\alpha_1+2\alpha_2,\alpha_1+\alpha_2$.
    \item $s_1 s_2 s_1 s_2 s_1 s_2$ has all positive roots as inversions. It contains a linear $\pi_1$ generated at $\alpha_1,\alpha_2$.
\end{itemize}

This completes the casework.

\end{proof}

We proceed to present $P_2:=\text{red}(P_{\pi_2})/P_{\pi_1}$ where $\pi_2=s_2 s_1 s_3 s_2\in W(A_3)$.

\begin{lemma}

The set of additional BP patterns corresponding to $\pi_2$, $P_2:=\text{red}(P_{\pi_2})/P_{\pi_1}$, consists of
\begin{itemize}
    \item $s_1 s_2 s_1 \in W(B_2)$;
    \item $s_2 s_1 s_3 s_2\in W(A_3)$;
    \item $s_2 s_1 s_3 s_2\in W(C_3)$.
\end{itemize}

\end{lemma}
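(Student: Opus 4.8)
The plan is to mirror the proof of Lemma~\ref{lem:pi1}: compute $P_{\pi_2}$ explicitly, pass to its reduction $\text{red}(P_{\pi_2})$, and finally delete whatever contains a BP pattern from $P_1$. By Lemma~\ref{lem:lintoBP}, $P_{\pi_2}$ consists of those $\sigma\in W(\Theta)$ with $\Theta$ irreducible of rank at most $3$ that contain the linear pattern $\pi_2$; the root systems to examine are $A_1,A_2,A_3,B_2,B_3,C_3,G_2$. The first step is to rephrase linear $\pi_2$-containment in terms of inversions: reading $I_{A_3}(\pi_2)=\{\beta_2,\beta_1{+}\beta_2,\beta_2{+}\beta_3,\beta_1{+}\beta_2{+}\beta_3\}$ off Table~\ref{tab:labelDynkin}, an element $\sigma\in W(\Theta)$ contains $\pi_2$ generated at $b_1,b_2,b_3$ exactly when $b_1,b_2,b_3,b_1{+}b_2,b_2{+}b_3,b_1{+}b_2{+}b_3$ are all positive roots of $\Theta$, the last four are inversions of $\sigma$, and $b_1,b_3$ are non-inversions of $\sigma$ --- this is precisely the configuration of case $(2)$ of Lemma~\ref{lem:decompose}, read through Table~\ref{tab:labelDynkin}.

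In rank at most $2$ the combinatorics is rigid. Since $b_1{+}b_2{+}b_3$ must be a positive root while $b_1{+}b_2,b_2{+}b_3$ are positive roots with $b_1,b_2,b_3$ positive, a height and coefficient count eliminates $A_1$ and $A_2$ outright, and in $B_2$ forces $b_1=b_3$ to be the short simple root and $b_2$ the long simple root; this pins down the inversion set $\{\alpha_1,\alpha_1{+}\alpha_2,\alpha_1{+}2\alpha_2\}$ of $s_1s_2s_1\in W(B_2)$, which is therefore the only element of $W(B_2)$ containing linear $\pi_2$. For $G_2$, the elements lying in $P_1$ are deleted by the final mod-$P_1$ step and so are irrelevant; using the inversion data recorded in the proof of Lemma~\ref{lem:pi1}, one checks directly that none of the six remaining elements $\mathrm{id},s_1,s_2,s_1s_2,s_2s_1,s_1s_2s_1$ of $W(G_2)$ contains linear $\pi_2$, so $W(G_2)$ contributes nothing. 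For $A_3$ the same count forces all $b_i$ simple with $\{b_1,b_2,b_3\}=\Delta$, and since $b_2$ must be adjacent in the type $A_3$ path to both $b_1$ and $b_3$ it must equal $\alpha_2$; hence $I_{A_3}(\sigma)=\{\alpha_2,\alpha_1{+}\alpha_2,\alpha_2{+}\alpha_3,\alpha_1{+}\alpha_2{+}\alpha_3\}=I_{A_3}(\pi_2)$, so $\pi_2$ is the unique element of $W(A_3)$ containing linear $\pi_2$. Neither $\pi_2$ nor $s_1s_2s_1\in W(B_2)$ has two inversions summing to an inversion, so neither contains a linear $\pi_1$; neither contains a strictly smaller BP pattern from $P_{\pi_2}$ (there is no proper subsystem of $A_3$ or $B_2$ hosting one); so both land in $P_2$.

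The heart of the matter is $B_3$ and $C_3$. Here I would run through the $48$ elements of each Weyl group --- equivalently, classify the biconvex $I\subset\Phi^+$ witnessing a linear $\pi_2$ --- and track what the two reductions do. In $C_3$, the element $s_2s_1s_3s_2$, with inversion set $\{e_2{-}e_3,e_1{-}e_3,2e_2,e_1{+}e_2\}$, contains linear $\pi_2$ generated at $b_1=e_1{-}e_2$, $b_2=e_2{-}e_3$, $b_3=e_2{+}e_3$; no two of its inversions sum to a root, so it contains no linear $\pi_1$ and no $s_1s_2s_1\in W(A_2)$; and no three of its inversions are coplanar, so every restriction to a $C_2$-subsystem has length at most $2$ and in particular is not $s_1s_2s_1\in W(B_2)$. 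Thus it survives both reductions and lies in $P_2$. It then remains to verify that every \emph{other} element of $W(C_3)$ containing a linear $\pi_2$, and every element of $W(B_3)$ containing a linear $\pi_2$, is killed --- either by $\text{red}$, because it restricts to $s_1s_2s_1\in W(B_2)$ on one of the three coordinate-plane $C_2$-subsystems, or by the mod-$P_1$ step, because it contains a linear $\pi_1$. As a sanity check, $s_2s_1s_3s_2\in W(B_3)$ itself contains \emph{neither} a linear $\pi_1$ nor a linear $\pi_2$ --- it instead enters the final list through $\pi_3$ --- so it correctly does not appear in $P_2$ even though it occurs in Table~\ref{tab:booleanpatterns}.

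I expect this last verification for $B_3$ and $C_3$ to be the one genuine obstacle: it is a finite but lengthy inspection, most efficiently done by the same sort of computer check used for $G_2,F_4,E_8$ in Lemma~\ref{lem:decompose}, or by hand through a classification of the witnessing biconvex sets. Everything else --- the rank $\le 2$ cases, the $A_3$ case, and assembling the reductions --- is short and routine.
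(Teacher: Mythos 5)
Your overall strategy is the same as the paper's: enumerate the irreducible root systems of rank at most $3$, characterize linear $\pi_2$-containment via the inversion data of Table~\ref{tab:labelDynkin}, and then apply the two reductions. Everything you actually execute checks out and agrees with the paper: the elimination of $A_1$ and $A_2$, the identification of $s_1s_2s_1\in W(B_2)$ as the unique $B_2$ contribution, the dismissal of $G_2$, the determination that $\pi_2$ itself is the only $A_3$ element, and the verification that $s_2s_1s_3s_2\in W(C_3)$ (inversion set $\{e_2-e_3,\,e_1-e_3,\,2e_2,\,e_1+e_2\}$) contains a linear $\pi_2$, contains no linear $\pi_1$, and has no three coplanar inversions, so it survives both reductions. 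Your sanity check that $s_2s_1s_3s_2\in W(B_3)$ contains neither a linear $\pi_1$ nor a linear $\pi_2$ is also correct.

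The gap is the part you yourself flag: you never carry out the verification that every \emph{other} element of $W(B_3)$ and $W(C_3)$ containing a linear $\pi_2$ is killed by one of the two reductions, and that verification is the bulk of the paper's proof. It is not enough to observe that the check is finite and could be delegated to a computer; as written, the lemma's hardest claim --- in particular, that $W(B_3)$ contributes nothing at all to $P_2$ --- rests on an inspection you have not performed. For what it is worth, the paper does this by hand with a tractable organization: since $\beta_1+\beta_2+\beta_3$ must be a positive root of $B_3$ (resp.\ $C_3$), and only the full-dimensional case is relevant to $\mathrm{red}$, its height is $3$, $4$, or $5$; each height leaves only a handful of candidate triples $(\beta_1,\beta_2,\beta_3)$ once one uses that $\beta_1+\beta_2$ and $\beta_2+\beta_3$ are roots; and for each candidate, biconvexity either forces a linear $\pi_1$, forces a linear $\pi_2$ inside a $2$-dimensional subspace (killing the element under $\mathrm{red}$), or pins down the inversion set completely --- the last outcome occurring exactly once, in $C_3$, yielding $s_2s_1s_3s_2$. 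So the missing step is a genuine, if routine, dozen-odd-case analysis rather than a one-line observation, and your proof is incomplete without it.
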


\begin{proof}
$\text{red}(P_{\pi_2})/P_{\pi_1}$ consists of all elements of Weyl groups of root systems of rank at most $3$ that contain $\pi_2$ and do not contain the linear pattern $\pi_1$ nor any smaller BP pattern that contains the linear pattern $\pi_2$. The root systems of rank at most $3$ are $A_1, A_2, B_2, G_2, A_3, B_3, C_3$. Suppose that a linear map demonstrating containment of $\pi_2$ sends the simple roots to $\beta_1,\beta_2,\beta_3$. We note that then $\beta_1+\beta_2+\beta_3$ has to be a root. This already implies that a linear $\pi_2$ is not contained in any element of $W(A_1)$ or $W(A_2)$. Also note that $\beta_2, \beta_2+\beta_3$, and $\beta_1+\beta_2+\beta_3$ are all distinct inversions, so if $w$ contains $\pi_2$, then $w$ must have at least $3$ inversions, and that these inversions are all $\geq\beta_2$ in the root poset (this last fact will be useful later). We can use this inversion count to check the case of $B_2$. Consider the argument for $B_2$ in the proof of Lemma~\ref{lem:pi1}. We note that the same elements are ruled out on account of not having enough inversions. The only element which has not been ruled out and also does not contain $\pi_1$ is then $s_1 s_2 s_1\in W(B_2)$. It has the inversions $\alpha_1,\alpha_1+\alpha_2,\alpha_1+2\alpha_2$ (here and later, we are letting $\alpha_i$ be the simple roots of the root system into which we are considering a linear map, ordered according to our conventions), so it contains $\pi_2$ generated at $\alpha_2, \alpha_1, \alpha_2$. There is no strict subspace which contains a linear $\pi_2$, so we conclude that $s_1 s_2 s_1\in \text{red}(P_{\pi_2})/P_{\pi_1}$

For $G_2$, the only element left to consider after the proof of Lemma~\ref{lem:pi1} and counting inversions is $s_1 s_2 s_1\in W(G_2)$, which has the inversions $2\alpha_1+3\alpha_2, \alpha_1+\alpha_2,\alpha_1$. If $\beta_2\neq \alpha_1$ in this case, then it is not possible for there to be three distinct inversions containing $\beta_2$. Hence, we just need to consider the case that $\beta_2=\alpha_1$. Arguing similarly, we get that $\beta_2+\beta_3=\alpha_1+\alpha_2\implies \beta_3=\alpha_2$. Continuing, $\beta_1+\beta_2+\beta_3=2\alpha_1+3\alpha_2\implies \beta_1=\alpha_1+2\alpha_2$. However, $\beta_1+\beta_2=\alpha_1+3\alpha_2$ is not an inversion, so this does not give a linear $\pi_2$. Hence, we get no new elements from $G_2$.

For $A_3$, coefficient counting gives that $\beta_1, \beta_2,\beta_3$ must all be simple roots, from which $\beta_1+\beta_2,\beta_2+\beta_3\in \Phi^+\implies \beta_2=\alpha_2$, and WLOG $\beta_1=\alpha_1, \beta_3=\alpha_3$. The inversions of $w\in W(A_3)$ containing a linear $\pi_2$ pattern are fully determined by this, and this determines $w$ to be $s_2 s_1 s_3 s_2 \in W(A_3)$. One can check that this does not contain a linear $\pi_1$ and also that there is no strict subspace which contains $\pi_2$, so $s_2 s_1 s_3 s_2\in \text{red}(P_{\pi_2})/P_{\pi_1}$

Let us now show that $\text{red}(P_{\pi_2})/P_{\pi_1}$ contains no elements of $W(B_3)$. Suppose we have $w\in W(B_3)$ containing a linear $\pi_2$. Let the corresponding linear map send the simple roots of $A_3$ to $\beta_1, \beta_2, \beta_3\in B_3$. Let us consider the options for $\beta_1, \beta_2, \beta_3$. If the $\mathbb{R}$-span of these three roots is a proper subspace of the ambient $B_3$, then the restriction of $w$ to the root system in this subspace contains $\pi_2$, so $w\not\in \text{red}(P_{\pi_2})$. This leaves us with the case where the $\mathbb{R}$ span of $\beta_1, \beta_2, \beta_3$ is full-dimensional. Note that $\beta_1+\beta_2+\beta_3\in B_3$, so its height (the sum of its coefficients in the basis of simple roots $\alpha_1, \alpha_2, \alpha_3$ of $B_3$) is at most $5$. It is also at least $3$, and we will consider each option:
\begin{itemize}
    \item If the height of $\beta_1+\beta_2+\beta_3$ is $3$, the only full-dimensional option that also satisfies the condition that $\beta_1+\beta_2, \beta_2+\beta_3\in \Phi^+$ is $\beta_1=\alpha_1, \beta_2=\alpha_2, \beta_3=\alpha_3$ (the map in reverse order is also possible, but this gives the same inversions since $\pi_2$ is preserved under the isomorphism of $A_3$). Then $\beta_1+\beta_2+\beta_3$, $\beta_2+\beta_3$, and (by biconvexity) $\beta_1+2\beta_2+2\beta_3$ are all inversions, so $w$ contains a linear $\pi_1$ generated at $\beta_1+\beta_2+\beta_3,\beta_2+\beta_3$, so $w$ is not in $\text{red}(P_{\pi_2})/P_{\pi_1}$. 
    
    \item If the height of $\beta_1+\beta_2+\beta_3$ is $4$, then one of $\beta_1,\beta_2,\beta_3$ has height $2$, and the other two have height $1$ each. $B_3$ only has two roots of height $2$, namely $\alpha_1+\alpha_2$ and $\alpha_2+\alpha_3$. If the height two root is $\alpha_1+\alpha_2$, then any root adjacent to its preimage in the $A_3$ Dynkin diagram has to be sent to $\alpha_3$ (since neither $\alpha_1+(\alpha_1+\alpha_2)$ nor $(\alpha_1+\alpha_2)+\alpha_2$ is a root). Since we are in the full-dimensional case, this implies that $\beta_2\neq \alpha_1+\alpha_2$, so WLOG (as before) $\beta_1=\alpha_1+\alpha_2$. From what we already argued, it follows that $\beta_2=\alpha_3$. The only option for $\beta_3$ is $\beta_3=\alpha_2$. But then $w$ contains a linear $\pi_1$ generated at $\alpha_2+\alpha_3, \alpha_3$. So $w$ is not in $\text{red}(P_{\pi_2})/P_{\pi_1}$. If instead the height $2$ root is $\alpha_2+\alpha_3$, then an adjacent root can be $\alpha_1$ or $\alpha_3$. If $\alpha_2+\alpha_3=\beta_2$, then WLOG $\beta_1=\alpha_1$ and $\beta_3=\alpha_3$. Then $w$ contains a linear $\pi_1$ generated at $\alpha_1+\alpha_2+\alpha_3,\alpha_2+\alpha_3$. So $w$ is not in $\text{red}(P_{\pi_2})/P_{\pi_1}$. The remaining case is that WLOG $\alpha_2+\alpha_3=\beta_1$, in which case either $\beta_2=\alpha_1$, and hence $\beta_3=\alpha_2$; or $\beta_2=\alpha_3$, and hence $\beta_3=\alpha_2$. The former case is impossible since then $\beta_1+\beta_2+\beta_3\not\in \Phi^+$. In the latter case, we find a linear $\pi_1$ generated at $\alpha_2+\alpha_3, \alpha_3$. So $w$ is not in $\text{red}(P_{\pi_2})/P_{\pi_1}$.
    
    \item If the height of $\beta_1+\beta_2+\beta_3$ is $5$, let us split into cases according to whether one of $\beta_1,\beta_2,\beta_3$ has height $3$.
    
    If one of $\beta_1,\beta_2,\beta_3$ has height $3$, then this can be either $\alpha_1+\alpha_2+\alpha_3$ or $\alpha_2+2\alpha_3$ (since $B_3$ has no other roots of height $3$). In either case, $\beta_1+\beta_2+\beta_3=\alpha_1+2\alpha_2+2\alpha_3$ (since this is the unique root in $B_3$ of height $5$), and this lets us determine the other two $\beta_i$. In the former case, the other two are $\alpha_2,\alpha_3$, in which case we note by considering pairwise sums that the only option is $\beta_1=\alpha_1+\alpha_2+\alpha_3$, $\beta_2=\alpha_3, \beta_3=\alpha_2$. But then $w$ contains a linear $\pi_1$ generated at $\alpha_2+\alpha_3, \alpha_3$. So $w$ is not in $\text{red}(P_{\pi_2})/P_{\pi_1}$. In the latter case, the other two are $\alpha_1,\alpha_2$, in which case (arguing as before) we get $\beta_1=\alpha_2+2\alpha_3, \beta_2=\alpha_1, \beta_3=\alpha_2$. But then there is a linear $\pi_2$ generated at $\alpha_3, \alpha_1+\alpha_2, \alpha_3$ which is contained in a $2$-dimensional subspace, so $w\not\in \text{red}(P_{\pi_2})$.
    
    If there is no root among $\beta_1,\beta_2,\beta_3$ of height $3$, then two have height $2$ and one has height $1$. The only two roots of $B_3$ of height $2$ are $\alpha_1+\alpha_2$ and $\alpha_2+\alpha_3$, and since it is not possible that one of these appears twice (that would contradict with full-dimensionality), both must appear once among $\beta_1,\beta_2,\beta_3$. Since $(\alpha_1+\alpha_2)+(\alpha_2+\alpha_3)\not\in \Phi^+$, these must be $\beta_1$ and $\beta_3$, so WLOG $\beta_1=\alpha_1+\alpha_2, \beta_3=\alpha_2+\alpha_3$. Using $\beta_1+\beta_2+\beta_3=\alpha_1+2\alpha_2+2\alpha_3$, we get that $\beta_2=\alpha_3$. But then $w$ contains a linear $\pi_1$ generated at $\alpha_1+\alpha_2, \alpha_2+2\alpha_3$. So $w$ is not in $\text{red}(P_{\pi_2})/P_{\pi_1}$.
    
\end{itemize}

This completes the case check for $B_3$. We do the same for $C_3$. The case check can be set up completely analogously, and there will again be $3$ options for the height of $\beta_1+\beta_2+\beta_3$. We will now find that there is exactly one $w\in W(C_3)$ that contains a linear $\pi_2$.
\begin{itemize}
    \item If the height of $\beta_1+\beta_2+\beta_3$ is $3$, then again the only option we have to consider is $\beta_1=\alpha_1, \beta_2=\alpha_2, \beta_3=\alpha_3$. Then $\alpha_2$, $\alpha_2+\alpha_3$, and (by biconvexity) $2\alpha_2+\alpha_3$ are all inversions, so we find a linear $\pi_1$.
    \item If the height of $\beta_1+\beta_2+\beta_3$ is $4$, then the height $2$ $\beta_i$ is $\alpha_1+\alpha_2$ or $\alpha_2+\alpha_3$. If the height $2$ $\beta_i$ is $\alpha_1+\alpha_2$, then arguing like for $B_3$, we get WLOG $\beta_1=\alpha_1+\alpha_2$, $\beta_2=\alpha_3$, $\beta_3=\alpha_2$. Then note that $\beta_1+\beta_2+\beta_3=\alpha_1+2\alpha_2+\alpha_3$ is an inversion, so by biconvexity, at least one of $\alpha_1$ and $2\alpha_2+\alpha_3$ is an inversion. If the former is an inversion, there is a linear $\pi_1$ generated at $\alpha_1, \alpha_2+\alpha_3$. If the latter is an inversion, there is a linear $\pi_2$ generated at $\alpha_2, \alpha_3, \alpha_2$, the span of which is a proper subspace. This leaves us with the case that the height $2$ $\beta_i$ is $\alpha_2+\alpha_3$. If $\alpha_2+\alpha_3=\beta_2$, then WLOG $\beta_1=\alpha_1$, $\beta_3=\alpha_2$. Since $\alpha_2+\alpha_3$ is an inversion, biconvexity gives that at least one of $\alpha_2,\alpha_3$ is an inversion. If $\alpha_2$ is an inversion, then we have a $\pi_1$ generated at $\alpha_2, \alpha_2+\alpha_3$. If $\alpha_3$ is an inversion, then we have a $\pi_2$ generated at $\alpha_2,\alpha_3,\alpha_2$, the span of which is a proper subspace. The remaining case is that WLOG $\alpha_2+\alpha_3=\beta_1$; then $\beta_2=\alpha_1$ or $\beta_2=\alpha_2$. In the case that $\beta_2=\alpha_1$, we get that $\beta_3=\alpha_2$, from which $\beta_1+\beta_2=\alpha_1+\alpha_2+\alpha_3$ and $\beta_2+\beta_3=\alpha_1+\alpha_2$ are inversions, so there is a $\pi_1$ generated at $\alpha_1+\alpha_2, \alpha_1+\alpha_2+\alpha_3$. In the case that $\beta_2=\alpha_2$, we get that $\beta_3=\alpha_1$ (the case $\beta_3=\alpha_3$ is ruled out since $\beta_1+\beta_2+\beta_3\in \Phi^+$). Then note that $\alpha_2, 2\alpha_2+\alpha_3, \alpha_1+\alpha_2,\alpha_1+2\alpha_2+\alpha_3$ are inversions and $\alpha_1,\alpha_2+\alpha_3$ are non-inversions (by definition of linear pattern containment). The remaining roots are $\alpha_3,\alpha_1+\alpha_2+\alpha_3,2\alpha_1+2\alpha_2+\alpha_3$. If $\alpha_3$ is an inversion, then there is a linear $\pi_1$ generated at $\alpha_2,\alpha_3$. If $\alpha_1+\alpha_2+\alpha_3$ is an inversion, then there is a linear $\pi_1$ generated at $\alpha_1+\alpha_2+\alpha_3,\alpha_2$. If $2\alpha_1+2\alpha_2+\alpha_3$ is an inversion, then there is a linear $\pi_2$ generated at $\alpha_1,2\alpha_2+\alpha_3,\alpha_1$. So the only option is that $\alpha_3,\alpha_1+\alpha_2+\alpha_3,2\alpha_1+2\alpha_2+\alpha_3$ are all non-inversions, in which case the inversion set is exactly known and determines $w$ to be $s_2 s_1 s_3 s_2\in W(C_3)$. One can check explicitly that it does not contain any $\pi_1$, nor does it contain any $\pi_2$ in a strict subspace, so $s_2 s_1 s_3 s_2\in \text{red}(P_{\pi_2})/P_{\pi_1}$. We have now checked all options for the height $4$ case.
    \item If the height of $\beta_1+\beta_2+\beta_3$ is $5$, we again split into cases according to whether some $\beta_i$ has height $3$. 
    
    If one of $\beta_1,\beta_2,\beta_3$ has height $3$, then this can be either $\alpha_1+\alpha_2+\alpha_3$ or $2\alpha_2+\alpha_3$. Either way, $\beta_1+\beta_2+\beta_3=2\alpha_1+2\alpha_2+\alpha_3$, from which we can deduce the other two $\beta_i$. In the case that the height $3$ root is $\alpha_1+\alpha_2+\alpha_3$, we get the other two to be $\alpha_1$ and $\alpha_2$, from which WLOG $\beta_1=\alpha_1$, $\beta_2=\alpha_2$, $\beta_3=\alpha_1+\alpha_2+\alpha_3$. Note that $\beta_1+\beta_2+\beta_3=2\alpha_1+2\alpha_2+\alpha_3$ is an inversion but $\beta_3=\alpha_1+\alpha_2+\alpha_3$ is not an inversion, from which biconvexity gives that $\alpha_2+\alpha_3$ is an inversion. But then there is a $\pi_1$ generated at $\alpha_2,\alpha_2+\alpha_3$. In the case that the height $3$ root is $2\alpha_2+\alpha_3$, we get that the other two are $\alpha_1$ and $\alpha_1$. However, these span a two-dimensional subspace. 
    
    The remaining option is that two of $\beta_1,\beta_2,\beta_3$ have height $2$. The only two height $2$ roots are $\alpha_1+\alpha_2$ and $\alpha_2+\alpha_3$. By considering the dimension of the span, we see that it is not possible for one of these to appear twice, so both must be some $\beta_i$, which implies that the third root is $(2\alpha_1+2\alpha_2+\alpha_3)-(\alpha_1+\alpha_2)-(\alpha_2+\alpha_3)=\alpha_1$. By considering pairwise sums, we see that WLOG $\beta_1=\alpha_1, \beta_2=\alpha_2+\alpha_3, \beta_3=\alpha_1+\alpha_2$. Since $\alpha_2+\alpha_3$ is an inversion, biconvexity gives that at least one of $\alpha_2,\alpha_3$ is an inversion. If $\alpha_2$ is an inversion, then we find a linear $\pi_1$ generated at $\alpha_2, \alpha_1+\alpha_2+\alpha_3$. If $\alpha_2$ is not an inversion, then $\alpha_3$ is an inversion. Also note that $\alpha_1+2\alpha_2+\alpha_3$ is an inversion but $\alpha_1$ is not an inversion, so biconvexity implies that $2\alpha_2+\alpha_3$ is an inversion. So we find a linear $\pi_2$ generated at $\alpha_2,\alpha_3,\alpha_2$.

\end{itemize}

\end{proof}

We proceed to present $P_3:=\text{red}(P_{\pi_3})/(P_{\pi_1}\cup P_{\pi_2})$ where $\pi_3=s_2 s_1 s_3 s_4 s_2\in W(D_4)$.

\begin{lemma}

The set of additional BP patterns corresponding to $\pi_3$, $P_3=\text{red}(P_{\pi_3})/(P_{\pi_1}\cup P_{\pi_2})$, consists of
\begin{itemize}
    \item $s_1 s_2 s_1\in W(G_2)$;
    \item $s_2 s_1 s_3 s_2 \in W(B_3)$;
    \item $s_2 s_1 s_3 s_4 s_2\in W(D_4)$.
\end{itemize}
\end{lemma}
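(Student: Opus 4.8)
The plan is to mirror the proofs of the two preceding lemmas. By the definitions of $\text{red}(\cdot)$ and $(\cdot)/(\cdot)$, the set $\text{red}(P_{\pi_3})/(P_{\pi_1}\cup P_{\pi_2})$ is exactly the set of $w\in W(\Theta)$, with $\Theta$ irreducible of rank at most $4$, such that $w$ contains the linear pattern $\pi_3$, $w$ does not contain $\pi_3$ via any realization factoring through a proper subspace of the ambient space of $\Theta$ (equivalently, $w$ contains no strictly smaller BP pattern lying in $P_{\pi_3}$), and $w$ contains neither linear $\pi_1$ nor linear $\pi_2$. So I would run through the irreducible root systems of rank at most $4$, namely $A_1,A_2,A_3,A_4,B_2,B_3,C_3,B_4,C_4,D_4,F_4,G_2$.

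First I would set up the height reduction. Write $\beta_1,\beta_2,\beta_3,\beta_4$ for the images of the simple roots of $D_4$ under a realization, with $\beta_2$ the image of the trivalent node $\alpha_2$. Since the highest root $\alpha_1+2\alpha_2+\alpha_3+\alpha_4$ of $D_4$ is a positive root, its image $\beta_1+2\beta_2+\beta_3+\beta_4$ is a positive root of $\Theta$; and since each $\beta_i$ has height at least $1$, this image has height at least $5$ in $\Theta$. This immediately eliminates $A_1,A_2,A_3,A_4,B_2$, none of which has a positive root of height $5$. I would also record that every inversion of $\pi_3$ lies weakly above $\alpha_2$ in the root poset of $D_4$, so every inversion-image lies weakly above $\beta_2$.

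Next I would handle $G_2,B_3,C_3,D_4$ by the same coefficient count used in the $\pi_2$ case: each of these four has a unique positive root of height at least $5$ (its highest root), so $\beta_1+2\beta_2+\beta_3+\beta_4$ must be that root, which forces every $\beta_i$ to be a simple root and $\beta_2$ to be the simple root appearing with coefficient $2$. A short check of the few remaining possibilities then shows that $G_2$ produces only $s_1s_2s_1$ (with $\beta_2$ the long simple root and $\beta_1=\beta_3=\beta_4$ the short one), $B_3$ produces only $s_2s_1s_3s_2$ (with two of $\beta_1,\beta_3,\beta_4$ coinciding, since $\alpha_2$ has only two Dynkin neighbors in $B_3$), $D_4$ produces only $\pi_3=s_2s_1s_3s_4s_2$, and $C_3$ produces nothing (every candidate assignment makes the image of some height-$3$ positive root of $D_4$ fail to be a root of $C_3$). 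For the three surviving elements one checks directly, using Lemma~\ref{lem:pi1} together with the explicit inversion sets, that they contain neither linear $\pi_1$ nor linear $\pi_2$; and $\pi_3$ admits no realization in a proper subspace of any of $G_2,B_3,D_4$, since by the height argument a subsystem of rank at most $2$ cannot carry $\pi_3$. Hence each of the three lies in $P_3$.

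It remains to treat $B_4,C_4,F_4$, and this is the main obstacle. For these systems there are several positive roots of height at least $5$, so the coefficient count no longer pins down the realization; instead I would enumerate all full-dimensional realizations directly, that is, all linearly independent $4$-tuples $(\beta_1,\beta_2,\beta_3,\beta_4)$ of positive roots with $\beta_i+\beta_2\in\Phi^+$ for $i\in\{1,3,4\}$, subject to the requirement (forced by Definition~\ref{def:linear}) that every positive root of $D_4$ be mapped to a positive root of $\Theta$. For each such realization I would take the biconvex closure of the five inversion-images, which determines the minimal candidate for $I_\Theta(w)$, and then verify case by case that the resulting element either already contains a linear $\pi_1$ or $\pi_2$ (this is what happens in $C_4$: for instance $2e_2$ is forced to be an inversion since it is the sum of the two inversions $e_2-e_4$ and $e_2+e_4$, giving a linear $\pi_1$), or else contains $\pi_3$ inside a proper-subspace subsystem — for $B_4$, a coordinate $B_3$ — and hence lies outside $\text{red}(P_{\pi_3})$. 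Either way none of $B_4,C_4,F_4$ contributes a new element, completing the proof. This last step is a finite but sizeable computation; for $F_4$ (and optionally for $B_4,C_4$) it is cleanest to verify it on a computer, in the same spirit as the exceptional-type cases of Lemma~\ref{lem:decompose}. The subtle point to watch is that in these cases an offending element may be eliminated not by containing $\pi_1$ or $\pi_2$ but by the reduction condition alone — it contains $\pi_3$ in a smaller subsystem that is lower-dimensional yet full-rank in its own span — so one must keep track of both conditions, not merely of avoidance of $P_{\pi_1}\cup P_{\pi_2}$.
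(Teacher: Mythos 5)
This is essentially the paper's approach: the paper itself omits the argument, stating only that the lemma is verified by checking all elements of Weyl groups of irreducible root systems of rank at most $4$ with computer assistance, and your proposal carries out that same exhaustive check, handling the rank-$\le 3$ systems and $D_4$ by hand via the height-of-the-highest-root reduction and deferring $B_4$, $C_4$, $F_4$ to a finite (computer) verification. One small point to tighten: for $s_2 s_1 s_3 s_4 s_2\in W(D_4)$ the proper subspaces have rank up to $3$, not $2$, so the claim that it contains no smaller BP pattern from $P_{\pi_3}$ should be justified by your own rank-$\le 3$ classification (among irreducible systems of rank at most $3$ only $G_2$ and $B_3$ carry a linear $\pi_3$, and neither occurs as a subspace root system of the simply-laced $D_4$) rather than by the rank-$\le 2$ height argument alone.
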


One can prove this analogously to what we did for $\pi_1$ and $\pi_2$, i.e., by checking all elements of Weyl groups of irreducible root systems of rank at most $4$ (which we did using computer assistance), but we skip this.

We now give the proof of Theorem \ref{thm:main} from \ref{thm:linear}.

\begin{proof}[Proof of Theorem \ref{thm:main}]
We continue with the notation $\pi_1=s_1 s_2 s_1\in W(A_2)$, $\pi_2=s_2 s_1 s_3 s_2\in W(A_3)$, and $\pi_3= s_2 s_1 s_3 s_4 s_2\in W(D_4)$. By Theorem \ref{thm:linear}, for an irreducible root system $\Phi$, $w\in W(\Phi)$ is boolean iff it avoids the linear patterns $\pi_1, \pi_2, \pi_3$. By lemma \ref{lem:lintoBP} and the observations after it, $w$ avoids linear $\pi_1, \pi_2,\pi_3$ iff $w$ avoids all BP patterns in $P_1\cup P_2\cup P_3=:P$, which is exactly the set of BP patterns given in Theorem \ref{thm:main}. Hence, for $\Phi$ irreducible, $w\in W(\Phi)$ is boolean iff it avoids all BP patterns in $P$. Note that for any (not necessarily irreducible) root system $\Phi$, $w\in W(\Phi)$ is boolean iff the restrictions of $w$ to all irreducible components of $\Phi$ are boolean. Note that if $w$ avoids all the BP patterns in $P$, then the restriction of $w$ to any irreducible component also avoids all BP patterns in $P$, and hence is boolean. Conversely, if $w$ contains a BP pattern in $P$, then note that since all these patterns live in irreducible root systems, the subspace in which such a pattern is found is a subspace of an irreducible component, and hence the restriction of $w$ to this irreducible component is not boolean. These two directions together show that $w\in W(\Phi)$ is boolean iff it avoids all BP patterns in $P$. This is the statement of \ref{thm:main}, which is what we wanted to prove.
\end{proof}

\section{$k$-Boolean permutations}\label{sec:k-boolean}
Inspired by Proposition~\ref{prop:no-repeated-letters}, we define $k$-boolean permutations. 
\begin{definition}
A permutation $w\in\S_n$ is $k$-\textit{boolean} if for any reduced word of $w$, there is no simple transposition $s_i$ that appears strictly more than $k$ times.
\end{definition}

We see that $w$ is 0-boolean if and only if $w$ is the identity. Also by definition, being 1-boolean is the same as being boolean.

\begin{theorem}\label{thm:2boolean}
A permutation $w\in\S_n$ is 2-boolean if and only if $w$ avoids 3421, 4312, 4321 and 456123.
\end{theorem}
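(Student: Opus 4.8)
The plan is to recast $2$-booleanness in terms of the quantity $M_k(w)$, the largest number of copies of the generator $s_k$ that occur in a reduced word for $w$; by definition $w$ is $2$-boolean if and only if $M_k(w)\le 2$ for all $k$. I would think of a reduced word as a reduced wiring diagram on $n$ strands, so that an occurrence of $s_k$ is a crossing at ``level $k$'' (between positions $k$ and $k+1$). Two bookkeeping facts drive everything. First, the number of level-$k$ crossings equals half the total number of times strands cross the barrier between positions $k$ and $k+1$; the strands forced to cross (those with endpoints on opposite sides of the barrier) number $a_k(w):=|\{i\le k\colon w(i)>k\}|$ on each side, so $M_k(w)\ge a_k(w)$ always, and any extra level-$k$ crossings come only from strands that ``backtrack'' across the barrier. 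Second, between two consecutive level-$k$ crossings of a reduced word only the letters $s_{k-1}$ and $s_{k+1}$ affect positions $k,k+1$, and when they do they can only decrease the value at position $k$ and increase the value at position $k+1$; moreover, after the last level-$k$ crossing the two strands occupying positions $k,k+1$ move monotonically apart, the larger-valued one ending at a position $\le k$ and the smaller-valued one at a position $\ge k+1$.

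The direction ``$w$ avoids the four patterns $\Rightarrow$ $w$ is $2$-boolean'' is the main content, and I would prove the contrapositive: starting from a reduced word with $s_k$ appearing three times, at steps $t_1<t_2<t_3$ say, extract one of the four patterns. If $a_k(w)\ge 3$, then there are three positions $\le k$ carrying values $>k$ and (by a counting argument) three positions $>k$ carrying values $\le k$; depending on whether the triple of large values is increasing and whether the triple of small values is increasing, this subsequence is either a $456123$ pattern (when both are increasing --- the ``all crossings forced'' scenario) or, exploiting a descent among them, a $4312$, a $3421$, or a $4321$ pattern. If instead $a_k(w)\le 2$, the third level-$k$ crossing must come from a backtracking strand; using the second bookkeeping fact to read off the values in positions $k,k+1$ just before each of $t_1,t_2,t_3$ together with where the relevant strands finally land, one pins down enough of the one-line notation of $w$ to exhibit one of $3421$, $4312$, $4321$. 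Making this extraction uniform, and in particular correctly isolating the ``three forced crossings'' case --- which is precisely why the length-$6$ pattern $456123$ must be included alongside the three length-$4$ patterns --- is the step I expect to be the main obstacle.

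The direction ``$w$ contains one of the four patterns $\Rightarrow$ $w$ is not $2$-boolean'' cannot be reduced to a finite check plus a ``patterns of $k$-boolean permutations are $k$-boolean'' principle, since that principle fails for $k\ge 3$; instead I would argue it directly. One first verifies the four base cases by exhibiting reduced words with a triple: $3421=s_2s_1s_2s_3s_2$, $4312=s_2s_3s_2s_1s_2$, $4321=s_1s_2s_1s_3s_2s_1$, and $456123=(s_3s_2s_1)(s_4s_3s_2)(s_5s_4s_3)$. For a general $w$ with such a pattern occurrence at positions $p_1<\cdots<p_m$, one runs the previous analysis in reverse: pick a level $k$ interior to the pattern and build a reduced word for $w$ in which the $m$ pattern strands perform the triple-producing dance of the base case while the remaining $n-m$ strands are routed in agreement with the inversions of $w$ and without interfering with the three level-$k$ crossings (the point being that inserting further strands can only add level-$k$ crossings, never cancel the three we need). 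This routing step is routine but calls for care with reducedness, and completes the proof.
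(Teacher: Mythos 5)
Your high-level architecture is sound, and several of your ingredients are correct: the monotonicity of the values at positions $k$ and $k+1$ between consecutive occurrences of $s_k$ in a reduced word is exactly the engine the paper's own argument runs on; your four base-case reduced words check out; and your observation that $a_k(w)\ge 3$ already yields one of the four patterns (with $456123$ arising precisely when both triples are increasing, and a descent in either triple producing one of the length-$4$ patterns) is correct and is a genuinely clean way to explain why the length-$6$ pattern must be on the list. Nevertheless, both directions contain real gaps rather than deferred routine work. In the direction ``three copies of $s_k$ $\Rightarrow$ pattern,'' the case $a_k(w)\le 2$ is not a residual case --- it is the bulk of the proof. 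You assert that reading off the values at positions $k,k+1$ just before $t_1,t_2,t_3$ ``pins down enough of the one-line notation,'' but actually extracting $3421$, $4312$, or $4321$ requires tracking where the backtracking strand and the displaced forced strands ultimately land relative to one another, and this is a substantial case analysis: the paper's Lemma~\ref{lem:not2boolean-bad} needs an auxiliary closure lemma (Lemma~\ref{lem:inclength-stillbad}) to reduce by induction to words beginning and ending with $s_k$, and then five cases with subcases. You flag this step as ``the main obstacle'' yourself, and nothing in the proposal resolves it.

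The second gap is in the converse direction, where the stated justification is actually misleading. The difficulty is not that inserting extra strands might ``cancel'' level-$k$ crossings; it is that a crossing between two pattern strands in the full wiring diagram for $w$ need not occur at level $k$ at all, since the level at which two strands cross depends on how many other strands separate them at that instant. One cannot transplant the base-case dance; one must exhibit a reduced word for $w$ in which three specific value-pairs are swapped while occupying the same positions $k,k+1$, and whether this is achievable depends on which occurrence of the pattern one works with. This is exactly why the paper's Lemma~\ref{lem:bad-not2boolean} first replaces a given $3421$-occurrence by one with $r_3-r_2$ minimal (forcing every intermediate value to be small) before running its staged sorting construction, certifying each use of $s_k$ via Lemma~\ref{lem:si=support}; without such a normalization the construction fails. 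So the ``routing step'' is where the real content of this direction lives, and it is not routine. In short: correct skeleton and an elegant treatment of the $a_k\ge 3$ case, but the two pieces of unexecuted casework are the substance of the theorem, so this does not yet constitute a proof.
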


\begin{remark}
It is clear that being 0-boolean is equivalent to avoiding the pattern 21, and we know that being 1-boolean and being 2-boolean are characterized by pattern avoidance as well. However, it is not true that being $k$-boolean for $k\geq3$ is characterized by pattern avoidance. We have that $436512=s_3s_2s_3s_4s_5s_1s_2s_3s_4s_3$ is not 3-boolean since this reduced expression contains 4 copies of $s_3$. However, 4357612, which contains 436512 as a pattern, is 3-boolean by a computer check.
\end{remark}
We prove Theorem~\ref{thm:2boolean} in Section~\ref{sub:proof2boolean} and we then enumerate them in Section~\ref{sub:enumeration2boolean}.

\subsection{Proof of Theorem~\ref{thm:2boolean}}\label{sub:proof2boolean}
This section is devoted to proving Theorem~\ref{thm:2boolean}, which boils down to tedious case checking. We split the proof into two halves, one for each direction.
\begin{lemma}\label{lem:bad-not2boolean}
If a permutation $w\in\S_n$ contains 3421, 4312, 4321 or 456123, then there exists a reduced expression $w=s_{i_1}\cdots s_{i_{\ell}}$ where some simple transposition $s_k=(k\ k+1)$ appears at least 3 times.
\end{lemma}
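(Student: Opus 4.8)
The statement is about the symmetric group, so I would work concretely with one-line notation and the standard fact (Remark~\ref{rmk:si=reduced-typeA}) that the simple transposition $s_k$ appears in some (equivalently, in some particular) reduced word for $w$ whenever there exist $i\le k<j$ with $w(i)>w(j)$. More precisely I want a \emph{multiplicity} version: if I build a reduced word for $w$ by repeatedly peeling off a descent from the right, then the number of times $s_k$ appears can be read off from how the inversions crossing the gap between positions $k$ and $k+1$ get moved around. The cleanest packaging is via inversion sets in type $A_{n-1}$: the roots crossing the $k$-th wall are $e_i-e_j$ with $i\le k<j$, and by Lemma~\ref{lem:inv-after-si} applied repeatedly, peeling off a descent $s_a$ with $a\ne k$ never destroys the property ``supported on $\alpha_k$'' but peeling off $s_k$ itself removes exactly one generator of that type. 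So the count of $s_k$'s in the greedy reduced word equals something like the size of a maximal chain, in the restricted biconvex set, of roots supported on $\alpha_k$. I would first isolate this as a short combinatorial sublemma: \emph{if $w$ has $m$ inversions $e_i-e_j$ that are ``$k$-nested'' in a suitable sense, then some reduced word uses $s_k$ at least $m$ times.}

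Concretely, I expect the right statement to be: if there are indices $a_1 < a_2 \le k < b_1 < b_2$ (or a symmetric variant $a_1<a_2\le k$, or $k<b_1<b_2$) with $w(a_1)>w(b_1)$, $w(a_2)>w(b_2)$, and additionally the ``nesting'' forced by the pattern (so that $e_{a_1}-e_{b_1}$, $e_{a_2}-e_{b_2}$ and also $e_{a_1}-e_{b_2}$ are all inversions, with $e_{a_1}-e_{b_2}$ the ``tallest''), then $s_k$ appears at least $3$ times in some reduced word — because peeling from the right, every time we must cross wall $k$ with one of these three stacked inversions. Actually the more robust route is: it suffices to exhibit a subword witnessing that $\bigcup_{\beta\in I(w)}$-style counting gives multiplicity $\ge 3$ at some wall; but since we only need existence of a bad reduced word, I would instead just directly construct the word. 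The real work is the reduction: I claim that each of $3421$, $4312$, $4321$, $456123$, when it occurs as a pattern in $w$, forces such a configuration at some fixed wall $k$, and moreover this is ``pattern-local'' — if positions $p_1<p_2<p_3<p_4$ (resp.\ six positions for $456123$) carry the pattern, then there is a wall $k$ with $p_2\le k<p_3$ (the natural cut of the pattern) and the three required inversions all sit among these pattern positions. So the whole thing reduces to checking the four base cases $w\in\S_4$ (and $w\in\S_6$) by hand: for $3421$, $4312$, $4321\in\S_4$ exhibit explicit reduced words with three copies of $s_2$; for $456123\in\S_6$ exhibit a reduced word with three copies of $s_3$ (indeed $456123 = s_3 s_2 s_1 s_4 s_3 s_2 s_5 s_4 s_3$ has three $s_3$'s — this needs to be verified, but something of this shape works since $456123$ is the Grassmannian-type element with a $3\times3$ ``block swap'').

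\textbf{Lifting the base cases.} The lifting step is the one place to be careful, so I would state and use the following principle: if $w\in\S_n$ contains the pattern $v\in\S_m$ in positions $p_1<\dots<p_m$, and $v$ (built greedily) admits a reduced word with $\ge t$ copies of $s_j$, then $w$ admits a reduced word with $\ge t$ copies of $s_k$ where $k$ is the position in $w$ corresponding to the wall $j$ of $v$ (i.e.\ $k = p_j$, or any index with $p_j \le k < p_{j+1}$). The cleanest justification: the inversions of $v$ that are supported on $\alpha_j$ correspond, under the embedding of positions $p_1,\dots,p_m$, to genuine inversions of $w$ supported on $\alpha_k$; if $v$ has $m$-many pairwise-comparable (in the root poset) inversions supported on $\alpha_j$, then $w$ has at least that many inversions supported on $\alpha_k$, and by the inductive peeling argument (Lemma~\ref{lem:inv-after-si} plus Lemma~\ref{lem:si=support}, exactly as in the proof of Lemma~\ref{lem:si=support}) this forces $s_k$ to appear at least that many times in the greedy reduced word. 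So the skeleton is: (i) prove the peeling/multiplicity sublemma in type $A$; (ii) observe it is monotone under pattern containment via the position embedding; (iii) verify each of the four patterns is ``bad'' at its natural wall by direct computation in $\S_4$ or $\S_6$.

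\textbf{Main obstacle.} The main obstacle is step (i)/(ii) done correctly: I need the multiplicity lower bound to genuinely survive the passage from a pattern occurrence to the ambient permutation, and the subtlety is that extra values interleaved among the pattern positions could a priori let a reduced word ``route around'' a wall. The resolution is that interleaved values only ever \emph{add} inversions crossing wall $k$ (or leave the count unchanged), never remove the stacked ones coming from the pattern, so the lower bound on the length of a chain of $\alpha_k$-supported inversions is monotone — this is where I would spend the most care, phrasing it via biconvex sets and the root poset rather than ad hoc braid moves. Once that monotonicity is nailed down, the four base-case verifications are a finite, routine check.
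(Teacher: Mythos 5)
Your overall skeleton (a multiplicity lower bound for $s_k$, monotonicity under pattern containment, four base-case computations) is appealing, and your base cases are fine — e.g.\ $s_3s_2s_1s_4s_3s_2s_5s_4s_3$ is indeed a reduced word for $456123$ with three $s_3$'s. But the key sublemma (i) is false in every form you propose, and this is a genuine gap rather than a detail to be nailed down. Take $w=3412\in\S_4$ and $k=2$. Its inversions supported on $\alpha_2$ are all four of $e_1-e_3,e_1-e_4,e_2-e_3,e_2-e_4$; in particular it contains the length-$3$ chain $e_2-e_3<e_2-e_4<e_1-e_4$ in the root poset, and it also realizes your explicit configuration $a_1<a_2\le k<b_1<b_2$ with $w(a_1)>w(b_1)$, $w(a_2)>w(b_2)$, $w(a_1)>w(b_2)$ (take $a_1,a_2,b_1,b_2=1,2,3,4$). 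Yet $3412$ has exactly two reduced words, $s_2s_1s_3s_2$ and $s_2s_3s_1s_2$, each with exactly two copies of $s_2$; it is $2$-boolean (as it must be, since it avoids all four patterns of Theorem~\ref{thm:2boolean}). So neither ``maximal chain of $\alpha_k$-supported inversions'' nor your V-shaped configuration lower-bounds the achievable multiplicity of $s_k$. Worse, $3421$ and $3412$ have \emph{identical} sets of inversions supported on $\alpha_2$ (they differ only in the inversion $e_3-e_4$, which does not cross the wall), yet one is $2$-boolean and the other is not. Hence no invariant computed from $I_{\Phi}(w)$ restricted to the roots supported on $\alpha_k$ can possibly do the job, which also undercuts step (ii): the monotonicity you argue for is monotonicity of a quantity that does not control the answer. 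The intuition ``every one of these stacked inversions must be resolved by a crossing at wall $k$'' is exactly what fails: in $3412$ the inversion $e_1-e_3$ is resolved at wall $1$ and $e_2-e_4$ at wall $3$, so only two of the four wall-crossing inversions are ever resolved at wall $2$.

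The paper's proof does not attempt any such reduction to an inversion-set invariant. For each of the four patterns it chooses an occurrence with an extremal property (e.g.\ for $3421$, one minimizing $r_3-r_2$, which forces all interleaved values between $r_2$ and $r_3$ to be small), fixes the wall $k=r_3-1$, and explicitly constructs a length-decreasing sorting sequence that provably passes through wall $k$ twice, finishing with Lemma~\ref{lem:si=support} to extract a third forced copy of $s_k$ from a remaining inversion; the $456123$ case is handled by three successive sorting passes over nested windows, each of which must use $s_k$ once. If you want to salvage your plan, you would need a multiplicity criterion that sees inversions \emph{not} crossing the wall (as the $e_3-e_4$ inversion distinguishing $3421$ from $3412$ shows), and at that point you are back to pattern-specific casework of essentially the paper's kind.
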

\begin{proof}
Multiplying $w$ by $s_k$ on the right can be thought of as swapping the values at index $k$ and $k+1$, in one-line notation of the permutation. We are going to construct a reduced expression of $w$ by using the simple transposition to gradually reduce the length of $w$ until we obtain the identity permutation. \\

\noindent\textbf{Case 1:} $w$ contains 3421. Suppose that $w$ contains 3421 at indices $r_1<r_2<r_3<r_4$ with $w(r_1)=c$, $w(r_2)=d$, $w(r_3)=b$, $w(r_4)=a$ with $a<b<c<d$. We pick $(r_1,r_2,r_3,r_4)$ such that $r_3-r_2$ is as small as possible. In this way, for every $j$ in the range of $r_2<j<r_3$, if $w(j)>c$, then we can replace $r_2$ by $j$ to decrease $r_3-r_2$, contradicting its minimality and if $a<w(j)<c$, we can replace $r_3$ by $j$ to decrease $r_3-r_2$, contradicting its minimality as well. As a result, for $r_2<j<r_3$, we must have $w(j)<a$. Let $k=r_3-1$ and we will show that we can use $s_k$ at least 3 times to decrease $w$ down to the identity. 

First, let $w^{(1)}=ws_{r_2}s_{r_2+1}\cdots s_{r_3-2}$ where the length of $w$ is decreasing by 1 at each step. We then have $w^{(1)}(k)=d$ and $w^{(1)}(k+1)=b$, which form a descent. 

Let $w^{(2)}=w^{(1)}s_k$. 

Now we multiply $w^{(2)}$ by some products of $s_i$'s to obtain $w^{(3)}$, where $k+1\leq i\leq r_4-1$ to sort the indices $\{k+1,k+2,\ldots,r_4\}$, i.e., $w^{(3)}(k+1)<\cdots<w^{(3)}(r_4)$ and $\{w^{(3)}(k+1),\ldots,w^{(3)}(r_4)\}=\{w^{(2)}(k+1),\ldots,w^{(2)}(r_4)\}$, while decreasing the length of $w$ by 1 in each step. 

We observe that $w^{(3)}(k)=b$ and $w^{(3)}(k+1)=a'\leq w^{(2)}(r_4)=a$ so let $w^{(4)}=w^{(3)}s_k$. Finally, notice that $w^{(4)}(r_1)=c>w^{(4)}(k+1)=b$, which means $w^{(4)}$ has an inversion supported on $s_k$. By Lemma~\ref{lem:si=support} (Remark~\ref{rmk:si=reduced-typeA}), any reduced expression of $w^{(4)}$ contains $s_k$. We have thus obtained three copies of $s_k$.

A diagram of the above steps is shown in Figure~\ref{fig:3421}.
\begin{figure}[h!]
\centering
\begin{tikzpicture}[scale=0.5]
\node at (0,0) {$
\begin{aligned}
w=&\cdots c\cdots d\cdots b\cdots\cdots a\cdots\\
w^{(1)}=&\cdots c\cdots\cdots db\cdots\cdots a\cdots\\
w^{(2)}=&\cdots c\cdots\cdots bd\cdots\cdots a\cdots\\
w^{(3)}=&\cdots c\cdots\cdots ba'\cdots d\cdots\cdots\\
w^{(4)}=&\cdots c\cdots\cdots a'b\cdots d\cdots\cdots
\end{aligned}
$};
\draw[->](6,1.2)--(6,0);
\node[right] at (6,0.6) {$s_k$};
\draw[->](6,-1.1)--(6,-2.3);
\node[right] at (6,-1.7) {$s_k$};
\draw(-1.4,-2.6)--(-1.4,-2.9)--(1.3,-2.9)--(1.3,-2.6);
\node at (0,-3.5) {\small supported on $s_k$};
\end{tikzpicture}
\caption{3421 implies some $s_k$ appearing at least 3 times}
\label{fig:3421}
\end{figure}

\noindent\textbf{Case 2:} $w$ contains 4312. Since 4312 is the inverse of 3421, this case follows from Case 1 by taking inverse.\\

\noindent\textbf{Case 3:} $w$ contains 4321. This is a simpler version of Case 1. As we have already done Case 1, we may as well assume that $w$ avoids 3421. Suppose that $w$ contains 4321 at indices $r_1<r_2<r_3<r_4$ with $w(r_1)=d$, $w(r_2)=c$, $w(r_3)=b$, $w(r_4)=a$ with $a<b<c<d$. For $j$ in the range of $r_2<j<r_3$, we must have $w(j)<w(r_2)=c$ since otherwise, $w$ contains 3421 at indices $r_2<j<r_3<r_4$. We can now run exactly the same argument as in Case 1 by switching all $c$'s with $d$'s. One could also refer to Figure~\ref{fig:3421} by considering $c$ and $d$ swapped.\\

\noindent\textbf{Case 4:} $w$ contains 456123. The argument is also largely similar. Suppose that $w$ contains 456123 at indices $r_1<\cdots<r_6$ with $w(r_1)=d$, $w(r_2)=e$, $w(r_3)=f$, $w(r_4)=a$, $w(r_5)=b$ and $w(r_6)=c$. Let $r_3\leq k<r_4$ be any index in between $r_3$ and $r_4$. Consider $w^{(1)}$, which is obtained from $w$ by sorting indices $r_3,r_3+1,\ldots,r_4$ in order, i.e. $w^{(1)}(r_3)<\cdots<w^{(1)}(r_4)$. Equivalently, we can obtain $w^{(1)}$ from $w$ by multiplying $s_j$ on the right, for some $r_3\leq j<r_4$, so that the length decreases after the multiplication, until such operation cannot be performed anymore. By Lemma~\ref{lem:si=support} (Remark~\ref{rmk:si=reduced-typeA}), as $w(r_3)>w(r_4)$, $s_k$ must be used. Next, let $w^{(2)}$ be the permutation obtained from $w^{(1)}$ by sorting indices $r_2,\ldots,r_5$. Similarly, as $w^{(1)}(r_2)>w^{(1)}(r_5)$, $s_k$ is used in the process. Finally, let $w^{(3)}$ be the permutation obtained from $w^{(2)}$ by sorting indices $r_1,\ldots,r_6$ and as $w^{(2)}(r_1)>w^{(2)}(r_6)$, $s_k$ is used a third time.
\end{proof}

We now proceed to the other direction of Theorem~\ref{thm:2boolean}.
\begin{lemma}\label{lem:inclength-stillbad}
Let $w$ be a permutation that contains one of 3421, 4312, 4321, 456123. If $u=ws_k$, or $u=s_kw$, such that $\ell(u)=\ell(w)+1$, then $u$ also contains one of these patterns.
\end{lemma}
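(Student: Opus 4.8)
The plan is to prove the contrapositive-flavored statement directly: assuming $w$ contains one of the four patterns $3421, 4312, 4321, 456123$ and $u = ws_k$ (or $u = s_kw$) with $\ell(u) = \ell(w)+1$, I want to exhibit an occurrence of one of the four patterns in $u$. Since $u = s_kw$ is handled by applying the $u = ws_k$ case to inverses (the set $\{3421,4312,4321,456123\}$ is closed under taking inverses, as $3421^{-1} = 4312$, $4321^{-1} = 4321$, $456123^{-1} = 456123$, and $3421$ is accounted for), I would focus on $u = ws_k$. Multiplying by $s_k$ on the right with $\ell(ws_k) = \ell(w)+1$ means swapping the values in positions $k$ and $k+1$ of the one-line notation, where $w(k) < w(k+1)$ before the swap; so $u$ agrees with $w$ except $u(k) = w(k+1) > u(k+1) = w(k)$.

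The key step is a case analysis on how the swapped positions $k, k+1$ interact with a chosen occurrence of the pattern in $w$. Fix an occurrence of one of the patterns at positions $r_1 < \cdots < r_m$ (with $m = 4$ or $6$). If neither $k$ nor $k+1$ lies in $\{r_1,\ldots,r_m\}$, the occurrence survives untouched in $u$ and we are done. If exactly one of $k, k+1$ is among the $r_i$, say $r_j$, then in $u$ the value at $r_j$ is replaced by a value that is either larger (if $r_j = k$) or smaller (if $r_j = k+1$) than before, but all other positions of the occurrence are unchanged. Here I would argue that one can usually repair the occurrence: because the swap only moves one value past its neighbor, the relative order of the $m$ pattern positions changes in at most one adjacent comparison, and by the structure of the four patterns (each of which has a lot of ``room''), one can either keep the same $m$ positions and still read off one of the four patterns, or slide $r_j$ to $k$ or $k+1$ (whichever is not already used) — noting that $k$ and $k+1$ are adjacent indices, so inserting the other one preserves the ordering of indices — and recheck. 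The genuinely delicate subcase is when both $k$ and $k+1$ appear among the $r_i$; since they are consecutive indices, they must be $r_j, r_{j+1}$ for some $j$, and the swap exchanges two values that were increasing into decreasing at consecutive pattern positions. I would go through each pattern and each possible $j$: e.g. for $4321$ any such local swap still leaves a descent there and the pattern $4321$ or a sub-occurrence of $4321$/$3421$/$4312$ persists; for $456123$, swapping consecutive values among the ``$456$'' block or among the ``$123$'' block still leaves enough structure, and the only place swapping could destroy the pattern is at the $r_3, r_4$ boundary, which cannot happen since $w(r_3) > w(r_4)$ already (they are not increasing, so $s_k$ at that boundary would decrease length, contradicting $\ell(u) = \ell(w)+1$).

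I expect the main obstacle to be organizing the ``exactly one endpoint in the occurrence'' subcase cleanly, since replacing $w(r_j)$ by a nearby value can in principle break the pattern, and one must show a replacement position always exists. The cleanest way I see to handle this uniformly is: among all occurrences of the pattern in $w$, choose one that is in some sense ``robust'' (for instance, for the $3421$ case choose the occurrence minimizing $r_3 - r_2$ exactly as in Lemma~\ref{lem:bad-not2boolean}, which forces the values strictly between $r_2$ and $r_3$ to be very small, hence unaffected in the relevant way by the swap), and then show that for such a choice, no single-entry perturbation coming from an adjacent transposition can destroy it — either the perturbed entry still plays its role, or a neighboring index takes over. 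Carrying this out is just bookkeeping over four patterns and a bounded number of positions, but it is the part requiring care, and I would present it as a short table of cases rather than prose.
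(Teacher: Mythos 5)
Your overall strategy is the same as the paper's: reduce to $u=ws_k$ via inverses, then split according to how many of $k,k+1$ lie among the positions $r_1<\cdots<r_m$ of a fixed occurrence. But the step you flag as ``the main obstacle'' --- the case where exactly one of $k,k+1$ equals some $r_j$ --- is not an obstacle at all, and the machinery you propose for it (choosing a ``robust'' occurrence minimizing $r_3-r_2$, then a table of single-entry perturbation cases) is an unnecessary detour. The slide you mention in passing already finishes this case completely: replace $r_j$ by the other element of $\{k,k+1\}$, which by hypothesis is not among the $r_i$. The new index sequence is still strictly increasing because $k$ and $k+1$ are adjacent and every other $r_i$ is either $<k$ or $>k+1$; and since $u(k)=w(k+1)$, $u(k+1)=w(k)$, and $u$ agrees with $w$ elsewhere, the values read off at the new positions are \emph{literally the same} as $w(r_1),\ldots,w(r_m)$. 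So the identical pattern $\pi$ occurs in $u$; there is nothing to repair and no need to select the occurrence carefully. You should not entertain ``keeping the same $m$ positions with a changed value'' at all --- the slide is the whole argument here.

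The genuine content of the lemma is the case $\{k,k+1\}=\{r_j,r_{j+1}\}$, and there your write-up has both an error and a gap. The error: for $\pi=4321$ you claim the swap ``still leaves a descent there,'' but a length-increasing swap requires $w(k)<w(k+1)$, i.e.\ $\pi(j)<\pi(j+1)$, and $4321$ has no ascents, so this case simply cannot occur; your stated reasoning (a swap at a descent leaves a descent) is backwards, even though the conclusion is vacuously fine. The gap: you never treat $3421$ and $4312$ (each has exactly one ascent, at $j=1$ and $j=3$ respectively, and in both cases the swapped pattern $\pi s_j$ is $4321$), and for $456123$ you only assert that swapping inside the $456$ block or the $123$ block ``leaves enough structure'' without checking. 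The required verifications are finite and quick ($546123$ and $465123$ contain $4312$; $456213$ and $456132$ contain $3421$), but they constitute essentially the entire substance of the proof and must actually be carried out, which is exactly what the paper does.
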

\begin{proof}
Let's note that the set of patterns of interest is closed under taking inverses, so it suffices to consider only the case $u=ws_k$. Assume that $w$ contains $\pi$, one of the pattern of interst, at indices $r_1<\cdots<r_m$, where $m\in\{4,6\}$. If $\{k,k+1\}\cap\{r_1,\ldots,r_m\}\leq1$, then $u=ws_k$ contains the same pattern $\pi$. If $\{k,k+1\}\subset\{r_1,\ldots,r_m\}$, then $u$ contains $\pi s_j$, for some $s_j$ such that $\ell(\pi s_j)=\ell(\pi)+1$. If $\pi=3421$, then we must have $j=1$ and $\pi s_j=4321$; if $\pi=4312$, then $j=3$ and $\pi s_j=4321$; if $\pi=4321$, no such $j$ exists and we have a contradiction. The remaining case is $\pi=456123$ and if $j=1$, then $u$ contains $546123$, which contains 4312; if $j=2$, then $u$ contains 465123, which contains 4312; if $j=4$, then $u$ contains 456213, which contains 3421; if $j=5$, then $u$ contains 456132, which contains 3421. 
\end{proof}

\begin{lemma}\label{lem:not2boolean-bad}
Let $w\in\S_n$ be a permutation with a reduced expression $w=s_{i_1}\cdots s_{i_{\ell}}$ where some simple transposition $s_k$ appears at least 3 times, then $w$ contains one of 3421, 4312, 4321, 456123.
\end{lemma}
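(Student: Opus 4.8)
The plan is to induct on $\ell(w)$, peeling simple reflections off the two ends of a reduced word by repeated use of Lemma~\ref{lem:inclength-stillbad} until we are left with a rigid ``core'' word, and then to analyze that core word by hand using the wiring diagram (one-line notation).

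Concretely, fix a reduced word $w=s_{i_1}\cdots s_{i_\ell}$ and let $p_1<\cdots<p_m$ (with $m\ge 3$) be the positions at which the letter $s_k$ occurs. If $p_3<\ell$, then the prefix $s_{i_1}\cdots s_{i_{p_3}}$ is a reduced word for a strictly shorter element still using $s_k$ three times; by the inductive hypothesis it contains one of the four patterns, and since $w$ is obtained from it by a sequence of length-increasing right multiplications by simple reflections, Lemma~\ref{lem:inclength-stillbad} carries that pattern up to $w$. Symmetrically, if $p_{m-2}>1$ we strip from the left. Since $m\ge 4$ already forces $p_3<\ell$, the only surviving case is $m=3$, $p_1=1$, $p_3=\ell$; that is, $w=s_k\,v_1\,s_k\,v_2\,s_k$ where $v_1,v_2$ are nonempty words not using $s_k$. (Each of $v_1,v_2$ then factors as a commuting product of a word in $s_1,\dots,s_{k-1}$ and a word in $s_{k+1},\dots,s_{n-1}$.)

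For this core case I would argue on the wiring diagram. Since $v_1$ and $v_2$ contain no $s_k$, the only crossings across the wall separating positions $\le k$ from positions $>k$ are the three explicit $s_k$'s, and reducedness forbids any two strands from crossing twice anywhere. Thus just after the first crossing the left block of strands is $\{1,\dots,k-1,k+1\}$ and the right block is $\{k,k+2,\dots,n\}$ (labeling strands by their starting positions), and one records, for the second and third crossings, whether each participating strand is ``fresh'' or one that already moved across, together with which side of the wall each relevant strand ends on. This produces a finite list of configurations, and in each one the starting and ending positions of four strands — six strands in the generic configuration where all three crossings involve distinct strands — exhibit one of $3421$, $4312$, $4321$, or $456123$. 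It is convenient that the pattern set is closed under inverse (as already noted in the proof of Lemma~\ref{lem:inclength-stillbad}), so it suffices to locate the pattern in $w$ or in $w^{-1}$, which roughly halves the casework; Remark~\ref{rmk:si=reduced-typeA} is also useful for quickly certifying that a candidate set of indices is an inversion.

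The main obstacle is precisely this core-case analysis: one must check that reducedness eliminates every configuration that would otherwise only yield a non-forbidden pattern such as $3412$ or $45123$ — indeed the naive diagrams witnessing those turn out to force a repeated crossing of some strand pair somewhere in $v_1$ or $v_2$. Organizing the bookkeeping so that the six-element pattern $456123$ appears exactly when the three crossings are genuinely independent, while any coincidence among the strands collapses the witness to one of the length-four patterns, is where essentially all the effort goes. The smallest cases are a reassuring check: the shortest $w$ admitting three copies of some $s_k$ in a reduced word have length $5$ and realize exactly $3421$ and $4312$ (for instance $s_k s_{k-1} s_k s_{k+1} s_k$ gives a $3421$ and $s_k s_{k+1} s_k s_{k-1} s_k$ a $4312$), both of which visibly contain a forbidden pattern.
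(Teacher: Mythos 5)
Your inductive skeleton is sound and matches the paper's: induct on $\ell(w)$ and use Lemma~\ref{lem:inclength-stillbad} to lift a forbidden pattern from a shorter element. Your reduction is in fact slightly sharper than the paper's --- stripping letters from both ends of the reduced word until $s_k$ occurs exactly three times, as the first letter, the last letter, and once in between, so that $w=s_k v_1 s_k v_2 s_k$ with $v_1,v_2$ nonempty and free of $s_k$ --- and this step is correct, since a prefix or suffix of a reduced word is reduced and Lemma~\ref{lem:inclength-stillbad} covers both left and right multiplication. (The paper stops at the weaker reduction that the first and last letters are $s_k$, and then argues directly in one-line notation.) Your generic configuration also genuinely works: if the three wall crossings involve six distinct strands, then three values $\leq k$ end in positions $>k$ and three values $\geq k+1$ end in positions $\leq k$; either some pair of the large values (resp.\ small values) appears out of order, yielding $4312$ or $4321$ (resp.\ $3421$ or $4321$), or both triples are increasing, yielding $456123$.

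The gap is that essentially all of the content of the lemma lies in the degenerate configurations --- those where a strand participates in two of the three wall crossings --- and you have not carried out that analysis; you say yourself this is ``where essentially all the effort goes.'' In those configurations the wall-crossing data alone does not determine the relative order, at the two ends of the diagram, of the strands you need, so one must extract an additional witness whose position is forced; the paper does this by applying Lemma~\ref{lem:si=support} to $u=s_kws_k$ to produce an extra inversion across index $k$, and then runs a five-case analysis (Table~\ref{tab:2boolean-case-table}) occupying roughly two pages. Your assertion that every configuration naively witnessing only $3412$ or $45123$ forces a repeated crossing inside $v_1$ or $v_2$ is exactly the claim that needs proof, and it is stated rather than established. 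Until the finite list of configurations is actually enumerated and each is shown to force one of the four patterns, the argument is a plan rather than a proof.
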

\begin{proof}
Use induction on $\ell(w)$. If $s_{i_1}\neq s_k$, then $w'=s_{i_2}\cdots s_{i_{\ell}}$ contains $s_k$ at least 3 times so by induction hypothesis, $w'$ contains one of the 3421, 4312, 4321, 456123. By Lemma~\ref{lem:inclength-stillbad}, $w$ contains one of the patterns as well and we are done. Thus, we can assume that $s_{i_1}=s_k$, and similarly $s_{\ell}=s_k$, so that $w=s_k\cdots s_k\cdots s_k$.

As $w$ has a right inversion $s_k$, $w(k)>w(k+1)$. Let $x=w(k+1)$ and $y=w(k)$ with $x<y$. As $w$ has a left inversion $s_k$, we know that $k+1$ appears before $k$ in $w$. Let $w(i)=k+1$ and $w(j)=k$ with $i<j$. If $\{i,j\}=\{k,k+1\}$, then $w(k)=k+1$ and $w(k+1)=w(k)$, so that $ws_k=s_kw$ and $w$ cannot possibly have a reduced expression starting and ending at $s_k$. This case is impossible. We will consider various orderings of $i,j,k,k+1$ and $x,y,k,k+1$ to find patterns in $w$. Write $u=s_kws_k$ so that $\ell(u)=\ell(w)-2$. By Lemma~\ref{lem:si=support}, since a reduced expression of $u$ contains $s_k$, $u$ has an inversion across index $k$. We are going to use this strategy for the following cases.\\

\noindent\textbf{Case 1:} $|\{i,j\}\cup\{k,k+1\}|=3$. We have a few subcases here. 

If $i=k$, then $j>k+1$. As there are at least two values among $\{w(k+1),w(k+2),\ldots,w(n)\}$ that are at most $k$, namely $w(k+1)<w(k)=k+1$ and $w(j)=k$, there must be at least two values $\{w(1),\ldots,w(k)\}$ that are greater than $k$. We already have $w(k)=k+1$ so there exists some $a<k$ such that $w(a)>k$. But $w(a)\neq k+1$ so $w(a)\geq k+2$. As a result, $w$ contains 4312 at indices $a,k,k+1,j$.

If $i=k+1$, then $j>k+1$ and we see that $w(k)>w(k+1)>w(j)$. Then $u(k)=k$, $u(k+1)=w(k)>w(k+1)=k+1$, $u(j)=k+1$. Since $u$ has an inversion across index $k$, we must have some $a\in\{k+1,\ldots,n\}$ such that $u(a)\leq k$. As $u(k)=k$, $u(a)<k$, $a\neq k, k+1, j$. We see that $w(a)=u(a)$, and if $a<j$, $w$ contains 4312 at indices $k<k+1<a<j$ and if $a>j$, $w$ contains 4321 at indices $k<k+1<j<a$. 

If $j=k$, then $i<k$ and $w(i)>w(k)>w(k+1)$. Similar as above, we see that $u(i)=k$, $u(k)=w(k+1)<w(k)=k$, $u(k+1)=k+1$. As $u$ has $s_k$ in its reduced expressions, there exists some $a\in\{1,\ldots,k\}$ such that $u(a)>k$. Thus, $a\neq i,k$ and $u(a)\geq k+2$. Back to $w$, we have $w(a)=u(a)$. So if $a<i$, $w$ contains 4321 at indices $a<i<k<k+1$ and if $a>i$, $w$ contains 3421 at indices $i<a<k<k+1$.

If $j=k+1$, then $i<k$. Both $w(i)$, $w(k)$ are greater than $k$. As $\{w(1),\ldots,w(k)\}\cap\{k+1,\ldots,n\}$ has cardinality at least 2, $\{w(k+1),\ldots,w(n)\}\cap\{1,\ldots,k\}$ has cardinality at least 2. So there exists some $a>k+1$ such that $w(a)<k$. As a result, $w$ contains 3421 at indices $i<k<k+1<a$.

The situation when $|\{x,y\}\cup\{k,k+1\}|=3$ can be deduced from Case 1 by taking inverses. From now on, assume that both $\{i,j\}$ and $\{x,y\}$ are disjoint from $\{k,k+1\}$. Table~\ref{tab:2boolean-case-table} shows how we divide the problem into cases.
\begin{table}[h!]
\centering
\begin{tabular}{c|c|c|c}
& $x<y<k<k{+}1$ & $x<k<k{+}1<y$ & $k<k{+}1<x<y$ \\\hline
$i{<}j{<}k{<}k{+}1$ & Case 2 (4321) & Case 3 (4321/4312) & Case 3 (4321/4312) \\\hline
$i{<}k{<}k{+}1{<}j$ & Case 3 (4321/3421) & Case 5 (...) & Case 4 (4321/4312)\\\hline
$k{<}k{+}1{<}i{<}j$ & Case 3 (4321/3421) & Case 4 (4321/3421) & Case 2 (4321)\\\hline
\end{tabular}
\caption{Cases for the proof of Lemma~\ref{lem:not2boolean-bad}}
\label{tab:2boolean-case-table}
\end{table}

\noindent\textbf{Case 2:} $i{<}j{<}k{<}k{+}1$ and $x{<}y{<}k{<}k{+}1$ or $k{<}k{+}1{<}i{<}j$ and $k{<}k{+}1{<}x{<}y$. In this case, we directly see that $w$ contains 4321 at indices $i,j,k,k+1$ (either $i{<}j{<}k{<}k{+}1$ or $k{<}k{+}1{<}i{<}j$).\\

\noindent\textbf{Case 3:} $i{<}j{<}k{<}k{+}1$ and $y>k+1$. Since $\{w(1),\ldots,w(k)\}\cap\{k+1,\ldots,n\}$ has cardinality at least 2, namely $w(i)=k+1$ and $w(k)=y>k+1$, $\{w(k+1),\ldots,w(n)\}\cap\{1,\ldots,k\}$ must have cardinality at least 2. Say $k<a<b$ and $w(a),w(b)\leq k$. As $w(j)=k$ with $j<k$, we must have $w(a),w(b)<k$. As a result, $w$ contains either 4321 or 4312 at indices $i<j<a<b$. By taking inverses, we are also down with the case where $x{<}y{<}k{<}k{+}1$ and $j>k+1$. \\

\noindent\textbf{Case 4:} $i{<}k{<}k{+}1{<}j$ and $k{<}k{+}1{<}x{<y}$. Since $\{w(1),\ldots,w(k)\}\cap\{k+1,\ldots,n\}$ has cardinality at least 2, namely $w(i)=k+1$ and $w(k)=y>k+1$, $\{w(k+1),\ldots,w(n)\}\cap\{1,\ldots,k\}$ must have cardinality at least 2. Besides $w(j)=k$, we must some $a>k$, $a\neq j$, such that $w(a)<k$. Also $a>k+1$ since $w(k+1)=x>k+1$. As a result, $w$ contains 4321 at indices $k,k+1,j,a$ if $a>j$ and contains 4312 at indices $k,k+1,a,j$ if $a<j$.\\

\noindent\textbf{Case 5:} $i{<}k{<}k{+}1{<}j$ and $x{<}k{<}k{+}1{<}y$. Recall that $u=s_kws_k$. In this case, $u(i)=k$, $u(k)=w(k+1)=x<k$, $u(k+1)=w(k)=y>k+1$, $u(j)=k+1$. Since a reduced expression of $u$ uses $s_k$, we cannot possibly have $\{u(1),\ldots,u(k)\}=\{1,\ldots,k\}$. There exists $a<k$ such that $u(a)>k$ and $b>k$ such that $u(b)<k$. Since $u(j)=k+1$, $u(a)>k+1$, and also $a\neq i, k$. Similarly, $u(b)<k$ and $b\neq k+1, j$. This also tells us $u(a)=w(a)$, $u(b)=w(b)$. If $a<i$, then $w$ contains 4312 at indices $a,i,k+1,j$ and if $w(a)>y$, then $w$ contains 4312 at indices $a,k,k+1,j$. Similarly, if $b>j$, then $w$ contains 3421 at indices $i,k,j,b$ and if $w(b)<x$, then $w$ contains 3421 at indices $i,k,k+1,b$.  The final remaining case is that $i<a<k$, $k+1<w(a)<y$, $k+1<b<j$, $x<w(b)<k$, where $w$ contains 456123 at indices $i,a,k,k+1,b,j$.
\end{proof}

Now Theorem~\ref{thm:2boolean} follows from Lemma~\ref{lem:bad-not2boolean} and Lemma~\ref{lem:not2boolean-bad}.

\subsection{Enumeration of 2-boolean permutations}\label{sub:enumeration2boolean}
Throughout this section, let $f(n)$ denote the number of 2-boolean permutations in $\S_n$. We adopt the convention that $f(0)=1$. We have that $f(1)=1$, $f(2)=2$, $f(3)=6$, $f(4)=21$, $f(5)=78$, and so on, which appears as sequence A124292 in OEIS \cite{sloane2008line}.
\begin{theorem}\label{thm:enumeration}
Let $f(n)$ be the number of 2-boolean permutations in $\S_n$. Then
$$\sum_{n\geq0}f(n)q^n=\frac{1-5q+5q^2}{1-6q+9q^2-3q^3}.$$
\end{theorem}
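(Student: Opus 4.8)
The plan is to combine Theorem~\ref{thm:2boolean} with a structural recursion. By Theorem~\ref{thm:2boolean}, $f(n)$ equals the number of permutations in $\S_n$ avoiding $3421$, $4312$, $4321$ and $456123$, so it remains to enumerate this avoidance class. Observe first that the claimed identity is equivalent to the initial data $f(0)=1$, $f(1)=1$, $f(2)=2$ together with the linear recurrence $f(n)=6f(n-1)-9f(n-2)+3f(n-3)$ for all $n\geq 3$: clearing the denominator in $\bigl(\sum_{n\geq 0} f(n)q^n\bigr)(1-6q+9q^2-3q^3)$ and comparing the coefficients of $q^0$, $q^1$, $q^2$, and $q^n$ for $n\geq 3$ gives precisely these statements. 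The three initial values are immediate, so the task reduces to establishing the recurrence combinatorially.

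To set up the recursion I would first record a simplification of the length-$6$ condition: among permutations avoiding $3421$, $4312$ and $4321$, containing $456123$ is equivalent to the existence of positions $i_1<i_2<i_3$ and $j_1<j_2<j_3$ with all the $i$'s preceding all the $j$'s and $\min_a w(i_a)>\max_b w(j_b)$ — that is, to the existence of three "high" entries all preceding three "low" entries, with no constraint on their internal orders — since avoidance of $4312,4321$ forces any such triple of high entries to be increasing and avoidance of $3421,4321$ forces any such triple of low entries to be increasing (a short check on $4$-element subpatterns). With this in hand I would build a generating tree for the class: deleting the entry of value $n$ from a class member yields a class member (avoidance classes being closed under taking patterns), so each class member in $\S_n$ is obtained by a sequence of insertions of a new maximal value starting from the empty permutation. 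The assertion to prove is that this generating tree has only finitely many distinct labels, i.e. the isomorphism type of the subtree below $w$ depends only on a bounded amount of data about $w$ — essentially the configuration of slots into which the new maximum may legally be inserted, which is constrained by the three length-$4$ patterns and, via the reformulation above, requires only remembering a bounded counter recording how many entries "low relative to the prefix" already lie to the right of each slot. Granting a finite succession rule, the transfer-matrix method (equivalently, the kernel method applied to the resulting finite system of functional equations) outputs a rational generating function, which one matches against the claimed formula; equivalently, one reads off the order-$3$ recurrence from the minimal polynomial of the transfer matrix.

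An essentially equivalent and perhaps cleaner route is to bypass generating trees and instead partition the $2$-boolean permutations of $\S_n$ directly into a bounded number of families according to a suitable statistic (for instance the value or position of $n$, or the length of the initial increasing run), relate the sizes of these families in $\S_n$ to those in $\S_{n-1},\S_{n-2},\S_{n-3}$ by inserting or deleting a single entry, and solve the resulting finite linear system for the generating function. In either formulation I expect the main obstacle to be exactly the bookkeeping in this step: one must carefully enumerate the cases for how a $2$-boolean permutation restricts and extends, and in particular verify that the length-$6$ pattern $456123$ does not blow up the state space — the crucial point being that "how close $w$ is to containing $456123$" is controlled by a bounded quantity (at most two high entries accumulated on one side and needing a third, against at most two low entries on the other), so that the recursion genuinely closes up after finitely many states. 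The length-$4$ part of the analysis is comparatively tame once the structure of $\{3421,4312,4321\}$-avoiding permutations (as clarified by the reformulation) is pinned down; the real care is in carrying the extra length-$6$ constraint through the recursion.
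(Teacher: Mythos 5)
Your reduction of the theorem to the initial values $f(0)=f(1)=1$, $f(2)=2$ together with the linear recurrence $f(n)=6f(n-1)-9f(n-2)+3f(n-3)$ is correct, and your reformulation of the $456123$ condition (three ``high'' positions preceding three ``low'' positions forces $456123$ once $3421$, $4312$, $4321$ are avoided) is a genuinely correct and useful observation. However, the heart of the proof --- actually establishing the recurrence --- is not carried out. You propose a generating tree and assert that ``the assertion to prove is that this generating tree has only finitely many distinct labels,'' and then proceed by ``granting a finite succession rule''; but that finiteness claim, the identification of the states, and the computation showing the transfer matrix yields \emph{this particular} rational function are precisely the content of the theorem, and none of it is done. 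The alternative route you sketch (partition into boundedly many families and relate $\S_n$ to $\S_{n-1},\S_{n-2},\S_{n-3}$) is the right idea but is likewise left entirely unexecuted: no statistic is fixed, no structural lemmas are proved, and no system of equations is written down or solved. As it stands this is a plan for a proof, not a proof.

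For comparison, the paper follows the second route you mention. For $w$ with $w(1)\neq 1$ it introduces three regions $A(w),B(w),C(w)$ determined by the position of the value $1$ and the value $w(1)$, proves that the $C$-region is forced to be the staircase $(2,2),\dots,(c+1,c+1)$ and that one cannot have $|A(w)|\geq 2$ and $|B(w)|\geq 2$ simultaneously (this is exactly where the $456123$ pattern enters, alongside $3421$ and $4312$), then establishes five identities such as $f^{1}(n)=f(n-1)-f(n-2)$ and $f^{1,1}(n)=f(n-2)-1$ by explicit restriction/insertion bijections, and combines them by inclusion--exclusion into $f(n)=5f(n-1)-4f(n-2)-\sum_{k=1}^{n-3}f(k)+1$, which telescopes to the order-$3$ recurrence for $n\geq 5$. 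Note also that the paper's recurrence is only proved for $n\geq 5$, so it must verify $f(3)=6$ and $f(4)=21$ in addition to your three initial values; if your method likewise only yields the recurrence from some larger index on, you would need those extra checks too. To turn your proposal into a proof you would need to supply the state analysis (or region decomposition) and the resulting counting identities in full.
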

In this section, we think of 2-boolean permutations as permutations that avoid 3421, 4312, 4321 and 456123 (Theorem~\ref{thm:2boolean}). Let's first look at what a typical 2-boolean permutation looks like. Let $w$ be 2-boolean. If $w(1)=1$, then $w$ restricted to indices $2,3,\ldots,n$ is just a 2-boolean permutation in $\S_{n-1}$ (and it is easy to see that this can in fact be any $2$-boolean permutation in $\S_{n-1}$). If $w(1)\neq1$, we define the following sets:
\begin{align*}
C(w)=&\{(i,w(i))\:|\: 1<i<w^{-1}(1),1<w(i)<w(1)\},\\
A(w)=&\{(i,w(i)\:|\: 1<i<w^{-1}(1),w(i)>w(1))\},\\
B(w)=&\{(i,w(i))\:|\: i>w^{-1}(1),1<w(i)<w(1)\}.
\end{align*}
Write $a(w)=|A(w)|$, $b(w)=|B(w)|$ and $c(w)=|C(w)|$ for cardinality. Note that all these quantities are only defined for those $w$ such that $w(1)\neq1$. 
See Figure~\ref{fig:2boolean-structure} for a visual description of these regions.
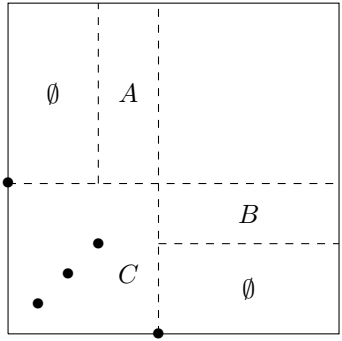
\begin{figure}[h!]
\centering
\begin{tikzpicture}[scale=0.8]
\draw(0,0)--(5.5,0)--(5.5,5.5)--(0,5.5)--(0,0);
\node at (0,2.5) {$\bullet$};
\node at (2.5,0) {$\bullet$};
\draw[dashed](2.5,0)--(2.5,5.5);
\draw[dashed](0,2.5)--(5.5,2.5);
\node at (0.5,0.5) {$\bullet$};
\node at (1,1) {$\bullet$};
\node at (1.5,1.5) {$\bullet$};
\draw[dashed](2.5,1.5)--(5.5,1.5);
\draw[dashed](1.5,2.5)--(1.5,5.5);
\node at (2,1) {$C$};
\node at (2,4) {$A$};
\node at (4,2) {$B$};
\node at (0.75,4) {$\emptyset$};
\node at (4,0.75) {$\emptyset$};
\end{tikzpicture}
\caption{Structure of a 2-boolean permutation}
\label{fig:2boolean-structure}
\end{figure}
Since $w$ avoids 4321, we see that entries in $C(w)$ must be increasing. Let $C(w)=\{(i_1,w(i_1)),\ldots,(i_c,w(i_c))\}$ with $i_1<\cdots<i_c$ and $w(i_1)<\cdots<w(i_1)$. As $w$ avoids 3421, the region $\{(i,w(i))\:|\: 1\leq i\leq i_c,w(i)>w(1)\}$ must be empty. Similarly as $w$ avoids 4312, the region $\{(i,w(i))\:|\: i>w^{-1}(1),1<w(i)<w(i_c)\}$ is empty. These empty sets are indicated in Figure~\ref{fig:2boolean-structure}. Consequently, we know that $C=\{(2,2),(3,3),\ldots,(c+1,c+1)\}$ where $c=c(w)$. Moreover, it is impossible for $a(w)\geq2$ and $b(w)\geq2$ to happen simultaneously. Otherwise, say $A(w)$ contains $(x_1,w(x_1))$ and $(x_2,w(x_2))$ while $B(w)$ contains $(y_1,w(y_1))$ and $(y_2,w(y_2))$ with $x_1<x_2$ and $y_1<y_2$. If $w(x_1)>w(x_2)$, then $w$ contains 4312 at indices $x_1,x_2,w^{-1},y_1$ and similarly if $w(y_1)>w(y_2)$, $w$ contains 3421 at indices $1,x_1,y_1,y_2$; and if finally $w(x_1)<w(x_2)$ and $w(y_1)<w(y_2)$, then $w$ contains 456123 at indices $1,x_1,x_2,w^{-1}(1),y_1,y_2$. As a result, either $a(w)\leq1$ or $b(w)\leq1$ for a 2-boolean permutation $w$.

As an important piece of notation, we use
$f^{a,b}_c(n)$ to denote the number of 2-boolean permutations $w$ in $\S_n$ such that $a(w)=a$, $b(w)=b$ and $c(w)=c$. Note that $f^{a,b}_c(n)=f^{b,a}_c(n)$ by the symmetry of taking inverses. We will also omit some superscripts or subscripts to mean we require less conditions. For example, $f^a(n)$ is the number of 2-boolean permutations $w$ in $\S_n$ with $a(w)=a$. 

The following lemma is the key to our recurrence.
\begin{lemma}\label{lem:count-eachterm}
We have the following identities for $n\geq4$:
\begin{align}
f^{0}(n)=&\sum_{1\leq k\leq n-1}f(k),\\
f^1(n)=&f(n-1)-f(n-2),\\
f^{0,0}(n)=&\sum_{0\leq k\leq n-2}f(k),\\
f^{0,1}(n)=&\sum_{1\leq k\leq n-2}f(k),\\
f^{1,1}(n)=&f(n-2)-1.
\end{align}
\end{lemma}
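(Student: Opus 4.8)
The plan is to read each identity off the structural picture established above — for a $2$-boolean $w$ with $w(1)\neq 1$: the first entry $w(1)=b(w)+c(w)+2$, the increasing staircase $C(w)=\{(2,2),\dots,(c+1,c+1)\}$, the $a(w)$ entries of $A(w)$ in positions $c+2,\dots,w^{-1}(1)-1$ (each of value $>w(1)$), the entry $1$ in position $w^{-1}(1)=a(w)+c(w)+2$, and a tail — by repeatedly \emph{deleting} a small block of entries to get a smaller $2$-boolean permutation and then summing over $c(w)$. The workhorse is a family of deletion-preservation lemmas, all proved the same way. The main one: deleting the $c$ staircase entries of a $2$-boolean $w$ (positions $2,\dots,c+1$, values $2,\dots,c+1$) yields a $2$-boolean permutation, and conversely inserting an increasing run $2,\dots,c+1$ just after the first entry of a $2$-boolean permutation with $c=0$ is always legal. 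This holds because no occurrence of $3421$, $4312$, $4321$, or $456123$ can use a staircase position: these positions form a contiguous block carrying (apart from the value $1$) the globally smallest values, so in an occurrence they would realize a prefix of the pattern's ranks at a contiguous block of pattern-positions, and one checks case by case that such a block of smallest ranks is forced to contain a descent, which an increasing run does not. In the same spirit (and more easily, the relevant value being an extreme or near-extreme) one gets: deleting the value $1$ when it sits in position $2$ is legal; deleting the value $2$ when it sits in position $1$ or $2$ is legal; and deleting the two entries with values $1,3$ from $(3,v,1,\dots)$ with $v>3$ is legal. I would also record the structural fact that a $2$-boolean $w$ with $w(1)\neq1$ has $w(2)\in\{1,2\}\cup\{w(1)+1,\dots,n\}$ (a value strictly between $1$ and $w(1)$ in position $2$ would have to equal $2$, by the shape of $C(w)$).

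With these in hand the first three identities are short. For $f^0$: when $a(w)=0$ we have $w^{-1}(1)=c+2$, so deleting positions $2,\dots,c+2$ leaves an arbitrary $2$-boolean permutation of $S_{n-c-1}$; summing $f(n-c-1)$ over the feasible $c$ gives $\sum_{k=1}^{n-1}f(k)$. The same deletion, now also remembering $b(w)$, yields $f^{0,0}$ and $f^{0,1}$: $b=0$ forces $w(1)=c+2$, so the resulting permutation fixes $1$ and contributes $f(n-c-2)$; $b=1$ forces the resulting permutation to have first value $2$, and deleting that leading $2$ matches these with all of $S_{n-c-2}$, again contributing $f(n-c-2)$; the two summations differ only by the feasibility constraint $b+c+2=w(1)\leq n$, which is exactly what turns $\sum_{k=0}^{n-2}$ into $\sum_{k=1}^{n-2}$.

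For $f^{1,1}$: here $a=b=1$ forces $w(1)=w^{-1}(1)=c+3$; deleting the staircase produces $(3,v,1,\dots)\in S_{n-c}$ with $v>3$, and deleting the entries with values $3$ and $1$ lands bijectively on the $2$-boolean $\rho\in S_{n-c-2}$ with $\rho(1)\neq1$, of which there are $f(n-c-2)-f(n-c-3)$; summing over $c$ telescopes to $f(n-2)-f(0)=f(n-2)-1$. For $f^1$ the chain is longer but no harder in kind: deleting the staircase reduces $f^1(n)$ to $\sum_c h(n-c)$ with $h(m):=\#\{2\text{-boolean }w'\in S_m:\ a(w')=1,\ c(w')=0\}$; for $w'=(M',v',1,\dots)$ deleting the value $1$ (now in position $3$) identifies $h(m)$ with $A(m-1):=\#\{2\text{-boolean }w''\in S_{m-1}:\ w''(1)<w''(2)\}$, so $f^1(n)=\sum_{\ell=2}^{n-1}A(\ell)$. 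To evaluate this I would compute the complementary count $B(\ell):=\#\{2\text{-boolean }w\in S_\ell:\ w(1)>w(2)\}$: by the structural fact above, $w(1)>w(2)$ forces $w(2)\in\{1,2\}$; the case $w(2)=1$ contributes $f(\ell-1)$ (delete the $1$), and the case $w(2)=2,\ w(1)\geq3$ is, after deleting the $2$ in position $2$, the set of $2$-boolean $\sigma\in S_{\ell-1}$ with $\sigma(1)\neq1$, contributing $f(\ell-1)-f(\ell-2)$; so $B(\ell)=2f(\ell-1)-f(\ell-2)$, hence $A(\ell)=f(\ell)-2f(\ell-1)+f(\ell-2)$, and $\sum_{\ell=2}^{n-1}A(\ell)$ telescopes to $f(n-1)-f(n-2)$.

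The main obstacle I expect is not any single clever step but the bookkeeping: the deletion-preservation lemmas each require a routine but genuine rank comparison against all four forbidden patterns (and one must also verify, for each of $f^{1,1}$ and $f^1$, that the chained deletions are simultaneously reversible), and the bounded end-corrections — empty staircase, empty tail, sizes too small for an $A$- or $B$-entry — are precisely what produce the $-1$ terms and the shifted summation limits, so they must be tracked explicitly rather than waved away.
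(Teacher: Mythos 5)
Your proposal is correct and follows essentially the same route as the paper: both decompose by $c(w)$, set up deletion/insertion bijections onto smaller $2$-boolean permutations (the paper phrases these as restrictions to complementary index sets), and telescope; your $A(\ell)/B(\ell)$ bookkeeping for $f^1$ is the paper's complement count $\#\{u\colon u(1)>u(2)\}=2f(\ell-1)-f(\ell-2)$ in disguise. The only quibbles are cosmetic: the $f^{1,1}$ telescope ends at $f(1)$ rather than $f(0)$ (harmless since both equal $1$), and your descent-based justification of the staircase-deletion lemma should be phrased so that it also covers an occurrence using a single staircase entry (which the rank-versus-position check rules out anyway, since no forbidden pattern has rank $1$ or $2$ in its first two positions).
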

\begin{proof} We will start by proving (1). Note that partitioning 2-boolean permutations according to the value of $c$, we obtain
\[f^0(n)=\sum_{c=0}^{n-2}f^0_c(n).\]

We will now show that $f^0_c(n)=f(n-1-c)$. Consider a 2-boolean permutation $w\in \S_n$ with $c(w)=c$ and $a(w)=0$. Let $w'$ be the restriction of $w$ to the indices $1,c+3,c+4,\ldots, n$. Note that $w'\in \S_{n-1-c}$ is a $2$-boolean permutation. Furthermore, this map $w\to w'$ takes different 2-boolean $w$ with $c(w)=c$ to different $2$-boolean $w'\in \S_{n-1-c}$. Also note that for any $2$-boolean $w'\in \S_{n-1-c}$, if we construct a permutation $w\in \S_n$ by letting $w(2)=2, w(3)=3, \ldots, w(c+1)=c+1$, $w(c+2)=1$, and we let the restriction of $w$ to the indices $1, c+3, c+4, \ldots, n$ be equal to $w'$, then $w$ is also $2$-boolean with $a(w)=0$ and $c(w)=c$. To see this, observe that for $w$ to contain $3421$, $4312$, $4321$, or $456123$, some image $\leq c+1$ would have to be $3$, $4$, or $5$ in the pattern, which is impossible. This map is also injective. Hence, we have a bijection showing that $f^0_c(n)=f(n-1-c)$. This lets us rewrite the above sum as
\[f^0(n)=\sum_{c=0}^{n-2}f^0_c(n)=\sum_{c=0}^{n-2}f(n-1-c)=\sum_{k=1}^{n-1} f(k),\]

The proof of (3) is completely analogous to the proof of (1), with the only difference being that we instead consider the restriction of $w$ to the indices $c+3, c+4, \ldots, n$, and note that this can be any 2-boolean $w'\in \S_{n-2-c}$, whereas $w(1)=c+2$, $w(2)=2, w(3)=3, \ldots, w(c+1)=c+1, w(c+2)=1$. The fourth identity is also analogous, with the restriction being to the same indices $c+3, c+4, \ldots, n$, and the fixed values being $w(1)=c+3$, $w(2)=2, \ldots, w(c+1)=c+1$.

As for (2), consider a $2$-boolean $w\in \S_n$ with $a(w)=1$ and $c(w)=c$. Then $w(2)=2, w(3)=3, \ldots, w(c+1)=c+1$, and $w(c+3)=1$. The restriction of $w$ to the rest of the indices $1,c+2, c+4, c+5, \ldots, n$ is a 2-boolean permutation $w'\in \S_{n-c-1}$ with $w'(1)<w'(2)$. Furthermore, any such permutation $w'$ can be inserted to these indices while giving a $2$-boolean $w$. These maps are inverses of each other, so it suffices to count the number of such permutations. For this, it suffices to count the size of the complement, i.e. the number of 2-boolean permutations $u\in \S_{n-c-1}$ with $u(1)>u(2)$. This is equivalent to $u(1)>1$ and $c(u)\geq 1$ or $c(u)=0, a(u)=0$.

For the first case, i.e. that $u(1)>1$ and $c(u)\geq 1$, we can count the number of such $u\in \S_{n-c-1}$ in the following way. Note that $u(2)=2$ (since $c(u)\geq 1$, and the restriction of $u$ to the rest of the indices $1,3,4,\ldots, n-c-1$ is a $2$-boolean permutation $u'\in \S_{n-c-2}$ such that $u(1)>1$. Furthermore, when we insert any such permutation to these indices, no bad pattern is created that involves the index $2$. These maps are clearly inverses of each other, so we have a bijection. The number of $2$-boolean $u'\in \S_{n-c-2}$ with $u(1)>1$ is $f(n-c-2)-f(n-c-3)$. By our bijection, this is also the number of 2-boolean $u\in \S_{n-c-1}$ with $u(1)>1$ and $c(u)\geq 1$.

For the second case, i.e. that $u(1)>1, c(u)=0,$ and $a(u)=0$, the number of such $u\in \S_{n-c-1}$ is $f^0_0(n-c-1)$, which is equal to $f(n-c-2)$ as argued before.

Putting everything together and summing over $c$, we get that the number of $2$-boolean $w$ with $a(w)=1$ is 
\[f^1(n)=\sum_{c=0}^{n-3} f(n-c-1)-\left(\left(f(n-c-2)-f(n-c-3)\right)+f(n-c-2)\right)\]\[=\sum_{c=0}^{n-3}f(n-c-1)-2f(n-c-2)+f(n-c-3).\]

This sum telescopes, and we are left with the desired
\[f^1(n)=f(n-1)-f(n-2)-f(0)+f(1)=f(n-1)-f(n-2).\]

It remains to show (5). Consider a permutation $w$ with $a(w)=b(w)=1$, and $c(w)=c$. Then $w(1)=c+3$, $w(2)=2, w(3)=3, \ldots w(c+1)=c+1$, and $w(c+3)=1$. Let $w'$ be the restriction of $w$ to the rest of the indices $c+2,c+4,c+5,\ldots, n$. Then $w'$ is a $2$-boolean permutation in $S_{n-c-2}$ with $w'(1)\neq 1$. Conversely, when we insert any such permutation to these indices, no bad pattern is created, and the result is $w$ with $a(w)=b(w)=1, c(w)=c$. This gives a bijection showing that $f^{1,1}_c(n)=f(n-c-2)-f(n-c-3)$. We sum over $c$:
\[f^{1,1}(n)=\sum_{c=0}^{c=n-4}f^{1,1}_c(n)=\sum_{c=0}^{c=n-4}f(n-c-2)-f(n-c-3).\]
This sum telescopes, and we are left with the desired identity
\[f^{1,1}(n)=f(n-2)-f(1)=f(n-2)-1.\]

\end{proof}

We are now ready to finish the proof of Theorem~\ref{thm:enumeration}.
\begin{proof}[Proof of Theorem~\ref{thm:enumeration}]
Let $n\geq4$. Recall that for a 2-boolean permutation $w\in\S_n$ with $w(1)\neq1$, we have either $a(w)\leq1$ or $b(w)\leq1$. By the symmetry between $A(w)$ and $B(w)$, and by simple inclusion-exclusion, we obtain
$$f(n)=f(n-1)+2f^0(n)+2f^1(n)-f^{0,0}(n)-2f^{0,1}(n)-f^{1,1}(n)$$
where the term $f(n-1)$ accounts for those permutations $w$ with $w(1)=1$ while $f^0(n)$ accounts for those with $a(w)=0$ and another $f^0(n)$ corresponds to those with $b(w)=0$ and so on. For simplicity of notation, write $S=\sum_{k=1}^{n-3}f(k)$. By Lemma~\ref{lem:count-eachterm}, we continue the computation
\begin{align*}
f(n)=&f(n-1)+2(f(n-1)+f(n-2)+S)+2(f(n-1)-f(n-2))\\
&-(f(n-2)+S+1)-2(f(n-2)+S)-(f(n-2)-1)\\
=&5f(n-1)-4f(n-2)-S+1.
\end{align*}
For $n\geq5$, we write the above equation using $n-1$ to get
$$f(n-1)=5f(n-2)-4f(n-3)-\sum_{k=1}^{n-4}f(k)+1.$$
Subtract from the above computation, we obtain a linear recurrence
$$f(n)-6f(n-1)+9f(n-2)-3f(n-3)=0$$
for $n\geq5$. Together with the initial terms $f(0)=f(1)=1$, $f(2)=2$, $f(3)=6$ and $f(4)=21$, we obtain the desired generating function.
\end{proof}

\section*{Acknowledgements}
This research is conducted during UROP (Undergraduate Research Opportunity Programs) in Spring 2020 at MIT. K.H. is supported by the Dae Y. and Young J. Lee UROP Fund. We thank Alex Postnikov for many illuminating ideas and Bridget Tenner for helpful conversations.

\bibliographystyle{plain}

\end{document}